\theoremstyle{plain}
\newtheorem{theorem}{Theorem}[section]
\newtheorem{lemma}[theorem]{Lemma}
\theoremstyle{definition}
\newtheorem{definition}[theorem]{Definition}
\newtheorem{remark}[theorem]{Remark}
\newcommand{\co}[1]{\mathord{\textsf{#1}}}
\newcommand{\Cl}{\co{G}}
\newcommand{\cl}{\co{cl}}
\newcommand{\id}{\co{id}}
\newcommand{\QQ}{\mathord{\mathbb{Q}}}
\newcommand{\II}{\mathord{\mathbb{I}}}
\newcommand{\fromto}[3]{{#1}\colon{#2}\to{#3}}
\newcommand{\pfromto}[3]{{#1}\colon{#2}\looparrowright{#3}}
\newcommand{\da}{{\mathord{\downarrow}}}
\newcommand{\ua}{{\mathord{\uparrow}}}
\newcommand{\reln}[1]{\lceil#1\rceil}
\newcommand{\relnup}[1]{\lfloor#1\rfloor}
\newcommand{\Meet}{\bigwedge}
\renewcommand{\Join}{\bigvee}
\newcommand{\cat}[1]{\mathord{\mathbf{#1}}}
\newcommand{\SUP}{\cat{SUP}}
\newcommand{\INF}{\cat{INF}}
\newcommand{\CLat}{\cat{CLat}}
\newcommand{\Pol}{\cat{Pol}}
\newcommand{\st}{\,\mid\,}
\newcommand{\defeq}{\mathrel{:=}}
\newcommand{\defiff}{\mathrel{\,:\Longleftrightarrow\,}}
\newcommand{\pol}{\co{pol}}
\newcommand{\rel}{\co{rel}}
\newcommand{\powerset}{\mathord{\mathcal{P}}}
\newcommand{\ignore}[1]{}
\title{A Relational Category of Birkhoff Polarities}
\author{M.~Andrew~Moshier\\
Chapman University}
\date{\today}
\begin{document}

\begin{abstract}
    Garret Birkhoff observed that any binary relation between two sets determines a Galois connection between the powersets, or equivalently, closure operators on the powersets, or equivalently, complete lattices of subsets that are dually isomorphic. 
    Referring to the duality of, say, points and lines in projective geometry, he named the binary relations \emph{polarities}. 

    Researchers since then have used polarities 
        (also known as formal contexts) as a convenient technical 
        way to build complete lattices from ``found'' data. 
    And so, various proposals for suitable morphisms between 
    polarities have tended to have a particular application in mind.

    In this work, we develop the structure of a category 
        of polarities and compatible relations, 
        adopting Birkhoff's original simple idea that the structure 
        of a polarity is its the Galois connection. 
    Hence, morphisms must be relations that, 
        in a reasonable sense, preserve Galois connections. 
    In particular, the dual equivalence of the category to the category of complete meet semilattices, completeness of the category, characterization of epimorphisms and monomorphisms, an epi/mono factorization system, as well as the star-autonomous structure of the category, all arise by extending Birkhoff's original observation to morphisms.
\end{abstract}

\maketitle

\section{Introduction}

Following Garrett Birkhoff \cite{Birkhoff_lattice_1948}, 
    a \emph{polarity} is a binary relation between two sets, 
    determining a complete lattice as explained below.
A typical and motivating example is the polarity obtained 
    from a finite lattice by taking the join irreducibles 
    as one set, the meet irreducibles as the second set,
    and the less than or equals relation between them.
These data are enough to reconstruct an isomorphic copy 
    of the lattice.
Or starting from any partially ordered set, 
    take two copies of set and the less than or equal relation.
These data determine the MacNeille completion 
    (the injective hull) of the poset. 

Polarities are known more recently thanks to the concept 
    analysis community \cite{ganter_formal_1998} 
    as \emph{formal contexts}, 
    where the standard notation is $(G,M;I)$ or $(O,A;I)$ 
    standing for Gegenst\"anden (Objects), 
    Merkmalen (Attributes) and Inzidenz (Incidence), 
    as formal contexts are meant by the concept analysis community 
    as a formalization of how objects can be related 
    to their attributes.
Though \emph{(formal) context} is the common contemporary name, 
    we use the name \emph{polarity} here to remind the reader 
    that the idea dates earlier to Birkhoff.
Birkhoff chose the term `polarity' from an analogy with the duality 
    between points (poles) and polar lines in projective geometry. 
Indeed, the self-dual nature of polarities will play a role 
    in this paper that is obscured by thinking of polarities 
    as comprised of objects and attributes.
To emphasize the dual nature of polarities, 
    we name a context by its incidence relation 
    and mark the underlying sets
    as in $\mathcal{A} = (A^-, A^+; \mathcal A)$.

Polarities determine complete lattices, and all complete lattices
    arise up to isomorphism from polarities.
But complete lattices are, of course, the same structures 
    as complete meet-semilattices and as complete join-semilattices.
The differences only show up when morphisms are considered. 
In this paper, we investigate a notion of morphism 
    for polarities that is, so to speak, ``native'' 
    to Birkhoff's idea of taking polarities as objects of study.
We  answer the question of what constitutes a general morphism 
    by taking the incidence relation of a polarity 
    as its identity morphism, 
    and extending that to general morphisms. 
The category obtained this way is quite natural 
    and admits interesting purely combinatorical methods 
    of construction for its categorical structure.

In the formal context literature one finds various discussions
    of possible ways to formulate a category of polarities.
The main author who has looked at possible morphisms 
    of polarities is Marcel Ern\'e 
    \cite{erne_categories_2014,erne_distributive_1993}. 
This work deals with morphisms from polarity $\mathcal A$ 
    to polarity $\mathcal B$ as pairs
    of maps $\fromto {f^-}{A^-}{B^-}$ and $\fromto {f^+}{A^+}{B^+}$
    satisfying obvious compatibility conditions.  
Ern\'e defines a \emph{conceptual pair} to be such a pair of maps 
    $(f^-,f^+)$ that jointly preserves the incidence relation 
    and for which the pre-image under $f^-$ of a Galois closed 
    subset of $B^-$ is Galois closed in $A^-$ and likewise, 
    the pre-image under $f^+$ of a Galois closed subset $B^+$ 
    is closed in $A^+$. 
The main point is that the obvious construction of a polarity 
    from a complete lattice, sending $L$ to $(L,L,\leq)$,  
    is functorial and has a left adjoint that exhibits 
    complete lattices as a coreflective subcategory 
    of Ern\'e's category of polarities. 
Polarities and conceptual pairs  via the coreflection are 
    turned into complete lattices 
    and complete lattice homomorphisms. 
The paper \cite{erne_categories_2014} provides a wealth 
    of other details and an elegant classification 
    of various special morphisms of complete lattices in terms 
    of their manifestation in maps between polarities.
Much of that paper is motivated by his earlier work 
    \cite{erne_distributive_1993}, in which Ern\'e considered 
    characterizations of polarities, the corresponding 
    complete lattices of which satisfy various distributive laws.

An alternative to Ern\'e is to regard a polarity 
    as a Chu space (over $2$). 
Chu space morphisms are pairs of functions 
    $\fromto {f^-}{A^-}{B^-}$ and $\fromto {f^+}{B^+}{A^+}$
    also satisfying certain compatibility conditions.
Chu spaces, of course, have a significant literature of their own. 
See Vineet Gupta's PhD thesis \cite{gupta_chu_1994} 
    for an early application,
    and Vaughan Pratt's excellent survey \cite{pratt_chu_1999} 
    for an introduction and bibliography.

More recently, Robert Goldblatt \cite{goldblatt_morphisms_2020}
    develops a category of polarities in the spirit of Ern\'e 
    with pairs of functions that satisfy a natural ``back and forth''
    condition generalizing bounded morphisms that are familiar 
    in the literature on algebraic semantics of modal logic. 

G.Q. Zhang, et al, \cite{hitzler_cartesian_2004,zhang_chu_2003},
    also considers categories in which the objects are 
    essentially polarities. 
But in these papers, the morphisms are chosen to yield 
    a category equivalent to the category of information systems,
    hence dually equivalent to Scott domains. 
The morphisms do not have directly to do 
    with the usual interpretation of a polarity, nor a Chu space,
    as specifying a complete lattice. 
In effect, these papers use the terms ``Chu space'' 
    and ``context'' simply to mean ``binary relation'' 
    and then investigate a category that has little to do 
    with either Chu spaces or polarities. 

In \cite{gehrke_generalized_2006}, Gehrke investigates 
    a special class of polarities the author calls 
    \emph{RS-frames}.
These are meant as generalizations of Kripke frames 
    in the sense that one has ``worlds'' (as in a Kripke frame) 
    and ``co-worlds''. 
The S in RS stands for \emph{separated}. 
This is a fairly harmless condition that is roughly
    analogous to being a $T_0$ space 
    -- worlds and co-worlds are ``separated'' by each other.
The R stands for \emph{reduced}. 
This is a substantive condition which translates, again roughly,
    to say that the worlds and co-worlds are, respectively, 
    join and meet irreducible.
In classical modal logic, a world (that is an element of a Kripke 
    model) spells out the truth and falsity of each 
    proposition, and thus determines a prime filter 
    of propositions.
RS-frames capture a similar idea in more generality.
Polarities, regarded as generalized Kripke frames,
    feature prominently algebraic proofs of cut elimination 
    for substructural logic as in \cite{galatos_residuated_2013}.

RS-frames constitute a very special case of polarities. 
The complete lattices that RS-frames determine are \emph{perfect},
    i.e., they are join generated by their completely join 
    irreducibles and meet generated by their
    completely meet irreducibles. 
Of course, such lattices are important for Gehrke's application
    to modal logic, but perfection is a rare property of complete lattices. 
Indeed, the unit interval of reals regarded as a complete lattice fails 
    to have any completely join irreducibles or any completely
    meet irreducibles. 
So an RS-frame cannot describe this lattice.

We propose to consider general polarities with morphisms based 
    on the idea that the incidence relation is itself 
    the identity morphism on the object. 
The Galois connection determined 
    by a polarity is the relevant structure. 
Thus a morphism must also be a binary relation 
    between the lower set of the domain polarity 
    and the upper set of the codomain polarity. 
Once we understand what composition must do, 
    the morphisms are defined as those relations for which 
    the incidence relations on the domain and codomain polarities 
    act as identities.

The initial work reported here was first developed during a visit 
    to St.~Anne's College, Oxford as a Plumer Fellow in 2011 and 2012.
The author thanks Hilary Priestley for being such 
    a gracious host and valued colleague.

\section{The category of polarities}

In this section, we present the category of polarities 
    and relations, and prove a few useful technical results. 

For order theory, we follow the naming and general notation in 
    \textit{Continuous Lattics and Domains} 
    \cite{gierz_continuous_2003}. 
In particular, $\SUP$ is the category of complete lattices 
    and supremum preserving functions, $\INF$, the category of 
    complete lattices and infimum preserving functions, 
    and $\CLat$ is the category of complete lattices 
    and functions that preserve both infima and suprema. 

Functions into an ordered structure are always regarded 
    as being pointwise ordered, unless explicitly described otherwise.
In particular, antitone maps between ordered structures 
    $P$ and $Q$ are ordered as if they are monotone maps 
    from the order opposite of $P$ to $Q$. 

The left adjoint of a monotone function $\fromto fPQ$ if it exists,
    is denoted by $\fromto {f_*}PQ$. 
Likewise, the right adjoint of a monotone function $\fromto gQP$,
    if it exists, is denoted by $\fromto {g^*}LM$. 
The reader will recall that morphisms in $\INF$ always 
    have left adjoints in $\SUP$, and vice versa.

For poset $P$, the order opposite of poset $P$ is denoted by $P^\partial$. 
Thus the equivalence of the categories $\INF$ 
    and $\SUP$ sends $L$ to $L^\partial$
    and leaves $\fromto fLM$ alone as a concrete function.
The dual equivalence from $\INF$ to $\SUP$ leaves objects 
    (complete lattices) alone, 
    and sends $\fromto fLM$ to $\fromto {f_*}ML$.

The well-known correspondence between closure operators and 
    closure systems plays a part in the work, 
    so a quick reminder is useful. 
A \emph{closure operator} on a set $A$ is a function 
    $\fromto c{\powerset(A)}{\powerset(A)}$ that is monotonic,
    inflationary (meaning $X\subseteq c(X)$ holds for all $X$), 
    and idempotent.
A \emph{closure system} is a family $M\subseteq \powerset(A)$ 
    that is closed under arbitrary intersections.
The set of fixpoints of a closure operator 
    ($\co{M}_c = \{X\in \powerset(A) \st c(X) = X\}$ ) 
    is a closed system.
For a closure system $M\subseteq \powerset(A)$, 
    the function $\textsf{c}_M$ defined by $X\mapsto \bigcap\{Y\in M\st X\subseteq Y\}$ 
    is a closure operator. 
The move from $c$ to $\co{M}_c$ and from $M$ to $\co{c}_M$ 
    are inverses. 

For a binary relation $R\subseteq A\times B$, 
    we write $R^\intercal$ for its converse relation; 
    $R[a]$ for the usual ``forward image'' consisting 
    of all $\beta\in B$ related to $a$; and 
    $\fromto{R^\da}{\powerset(B)}{\powerset(A)}$ and
    $\fromto{R^\ua}{\powerset(A)}{\powerset(B)}$ 
    for the antitone maps defined by
\begin{align*}
    R^\da(Y) &=\bigcap_{\beta\in Y} R^\intercal[\beta]\\
    R^\ua(X) &=\bigcap_{a\in X} R[a].
\end{align*}
So, it is easy to see that the following are all equivalent:
\begin{itemize}
    \item $X\times Y\subseteq R$;
    \item $Y\subseteq R^\ua(X)$;
    \item $X\subseteq R^\da(Y)$;
    \item $Y\subseteq (R^\intercal)^\da(X)$; and
    \item $X\subseteq (R^\intercal)^\ua(Y)$.
\end{itemize}

\begin{remark}
    As an aid to type checking, we generally use lower case 
        Latin letters for elements in the domain of a relation 
        and lower case Greek letters for elements of the codomain.
\end{remark}

The functions $R^\da$ and $R^\ua$, thus, 
    form a Galois connection on the subsets of $A$ and $B$.
So both $R^\da$ and $R^\ua$ send arbitrary unions to intersections,
    and both composites $R^\da R^\ua$ and $R^\ua R^\da$ 
    are closure operators on the domain and codomain of $R$, respectively.

Since $\powerset(B)^\partial$ is the free $\INF$ object 
    over the set $B$, any antitone map 
    $\fromto f{\powerset(B)}{\powerset(A)}$ (regarded as a function 
    from $\powerset(B)^\partial$) to $\powerset(A)$ that happens 
    to send unions --- meets in $\powerset(B)^\partial)$  ---
    to intersections arises uniquely as $R^\da$ 
    for some binary relation between $A$ and $B$.

Thus the set of relations $\powerset(A\times B)$ 
    and the hom set $\INF(\powerset(B)^\partial,\powerset(A))$ 
    are in a bijection. 
One direction sends $R$ to $R^\da$.
For the other direction, 
    suppose $\fromto f{\powerset(B)}{\powerset(A)}$ is 
    an antitone map, 
    not necessarily sending unions to intersections.
Let $\reln{f}$ denote the binary relation 
    $\reln{f}\subseteq A\times B$ defined by 
    $a\mathrel{\reln{f}}\beta$ if and only if $a\in f(\{\beta\})$,
    and $\relnup{f}$ for the converse of $\reln{f}$.

\begin{lemma}\label{lem:bracket-da}
    The operation $R\mapsto R^\da$ is right adjoint 
        to the operation $f\mapsto \reln{f}$, 
        in the sense that $\reln{f}\subseteq R$ if and only if 
        $f\leq R^\da$.
    Moreover, $\reln{R^\da} = R$.
    Likewise, $R\mapsto R^\ua$ is right adjoint to 
        $f\mapsto \relnup{f}$, and $\relnup{R^\ua} = R$.

    An antitone map $\fromto f{\powerset(B)}{\powerset(A)}$ 
        sends arbitrary unions to intersections 
        if and only if $f=\reln{f}^\da$.
\end{lemma}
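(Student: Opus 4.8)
The plan is to establish the adjunction together with the identity $\reln{R^\da}=R$ by reducing everything to the values on singletons, then deduce the $R^\ua$ statements from the $R^\da$ ones by passing to converse relations, and finally read off the last equivalence. First I would compute on singletons: from the displayed definition $R^\da(\{\beta\})=R^\intercal[\beta]$, so $a\mathrel{\reln{R^\da}}\beta$ holds iff $a\in R^\intercal[\beta]$ iff $a\mathrel{R}\beta$, which gives $\reln{R^\da}=R$ outright. For the adjunction, unwinding definitions shows that $\reln{f}\subseteq R$ is precisely the condition $f(\{\beta\})\subseteq R^\intercal[\beta]=R^\da(\{\beta\})$ for all $\beta\in B$. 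The implication $f\le R^\da\Rightarrow\reln{f}\subseteq R$ is then immediate by restriction to singletons. For the converse I would invoke antitonicity of $f$: since $\{\beta\}\subseteq Y$ whenever $\beta\in Y$, antitonicity gives $f(Y)\subseteq f(\{\beta\})$, hence $f(Y)\subseteq\bigcap_{\beta\in Y}f(\{\beta\})\subseteq\bigcap_{\beta\in Y}R^\da(\{\beta\})=R^\da(Y)$, the last step being the definition of $R^\da$. Both operations are visibly monotone, so this establishes that $R\mapsto R^\da$ is right adjoint to $f\mapsto\reln{f}$.

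The $R^\ua$ half I would obtain purely formally from the $R^\da$ half applied to the converse relation, using $(R^\intercal)^\da=R^\ua$ and the defining equation $\relnup{f}=\reln{f}^\intercal$. Thus $\relnup{f}\subseteq R$ iff $\reln{f}\subseteq R^\intercal$ iff $f\le(R^\intercal)^\da=R^\ua$, and $\relnup{R^\ua}=(\reln{R^\ua})^\intercal=(R^\intercal)^\intercal=R$ by the already-proved identity $\reln{S^\da}=S$ taken with $S=R^\intercal$.

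For the final biconditional, one direction is immediate: if $f=\reln{f}^\da$ then $f$ is of the form $S^\da$, and every such map sends arbitrary unions to intersections, as recorded just before the lemma. Conversely, suppose $f$ sends arbitrary unions to intersections. Computing on singletons as above gives $\reln{f}^\da(Y)=\bigcap_{\beta\in Y}\reln{f}^\intercal[\beta]=\bigcap_{\beta\in Y}f(\{\beta\})$, while the hypothesis gives $f(Y)=f\bigl(\bigcup_{\beta\in Y}\{\beta\}\bigr)=\bigcap_{\beta\in Y}f(\{\beta\})$; hence $f=\reln{f}^\da$.

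I do not expect a genuine obstacle here: the content is an exercise in the $\powerset(\,\cdot\,)$ freeness and the Galois-connection machinery already set up in the excerpt. The only points that need care are keeping the directions of the inclusions straight under the contravariance, and checking the empty-union/empty-intersection edge case, which is consistent because $f(\emptyset)$ and $R^\da(\emptyset)$ are both the top element $A$.
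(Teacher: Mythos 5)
Your proof is correct and follows essentially the same route as the paper's, which simply declares the adjunction facts trivial and observes that a map sending unions to intersections is determined by its values on singletons, where it agrees with $\reln{f}^\da$; you have merely written out those singleton computations and the converse-relation reduction for the $\ua$ half explicitly. One cosmetic point: for a general antitone $f$ the value $f(\emptyset)$ need not be all of $A$ --- the adjunction step only needs $f(\emptyset)\subseteq R^\da(\emptyset)=A$, which is automatic --- but this does not affect any step of your argument.
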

\begin{proof}
    The adjunction facts are trivial to check.
    As noted, $R^\da$ sends unions to intersections 
        for any binary relation $R$.
    If $f$ sends unions to intersections, 
        its behavior is determined by its behavior on singletons.
    But $f$ and $\reln{f}^\da$ agree on singletons.
\end{proof}

\begin{lemma}
    For binary relations $R,S\subseteq A\times B$, 
        the following are  equivalent:
    \begin{itemize}
    \item $R\subseteq S$;
    \item $R^\da \leq S^\da$;
    \item $R^\ua \leq S^\ua$.
    \end{itemize}
\end{lemma}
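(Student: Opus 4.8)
The plan is to reduce everything to Lemma~\ref{lem:bracket-da}, after first unwinding the order convention on antitone maps. Recall that $f \le g$ for antitone $f,g\colon \powerset(B)\to\powerset(A)$ means, as for any pointwise order, that $f(Y)\subseteq g(Y)$ for every $Y$; passing to $\powerset(B)^\partial$ affects only the meaning of \emph{monotone}, not the comparison of two given maps. Once this is pinned down, the three clauses are essentially a restatement of the injectivity of $R\mapsto R^\da$ already recorded.

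The implication $R\subseteq S \implies R^\da\le S^\da$ I would get straight from the defining formula: if $R\subseteq S$ then $R^\intercal[\beta]\subseteq S^\intercal[\beta]$ for each $\beta\in B$, hence $R^\da(Y)=\bigcap_{\beta\in Y}R^\intercal[\beta]\subseteq\bigcap_{\beta\in Y}S^\intercal[\beta]=S^\da(Y)$. For the converse, apply the adjunction of Lemma~\ref{lem:bracket-da} with $f\defeq R^\da$ and codomain relation $S$, namely $\reln{R^\da}\subseteq S$ iff $R^\da\le S^\da$; since that same lemma gives $\reln{R^\da}=R$, this reads $R\subseteq S$ iff $R^\da\le S^\da$, as wanted. (Equivalently, evaluating $R^\da\le S^\da$ at singletons yields $R^\intercal[\beta]=R^\da(\{\beta\})\subseteq S^\da(\{\beta\})=S^\intercal[\beta]$ for all $\beta$, which is precisely $R\subseteq S$.)

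For the third clause I would observe that $(R^\intercal)^\da=R^\ua$, since $(R^\intercal)^\da(X)=\bigcap_{a\in X}(R^\intercal)^\intercal[a]=\bigcap_{a\in X}R[a]=R^\ua(X)$, together with the triviality $R\subseteq S$ iff $R^\intercal\subseteq S^\intercal$. Feeding the converse relations into the equivalence just established gives $R\subseteq S$ iff $R^\intercal\subseteq S^\intercal$ iff $(R^\intercal)^\da\le(S^\intercal)^\da$ iff $R^\ua\le S^\ua$, closing the cycle. (One could equally run the singleton/adjunction argument for $R^\ua$ directly, using $\relnup{R^\ua}=R$.)

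There is no genuine obstacle here: the statement is a corollary of Lemma~\ref{lem:bracket-da} and its $R^\ua$ counterpart, and the only point needing a moment's care is fixing the direction of the order on antitone maps before the adjunction is invoked. The single nontrivial ingredient --- that $R\mapsto R^\da$ is injective, i.e.\ $\reln{R^\da}=R$ --- has already been supplied, so the proof should be short.
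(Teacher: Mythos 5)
Your proposal is correct and follows essentially the paper's own route: both rest on the adjunction $\reln{f}\subseteq S \iff f\le S^\da$ from Lemma~\ref{lem:bracket-da} together with $\reln{R^\da}=R$ (and, for the $\ua$ clause, the counterpart with $\relnup{R^\ua}=R$, which the paper invokes directly and you reach via the harmless detour $(R^\intercal)^\da=R^\ua$). The extra remarks on the pointwise order of antitone maps and the direct singleton computation are fine but not needed.
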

\begin{proof}
    $R^\da \leq S^\da$ if and only if $\reln{R^\da} \subseteq S$,
        and $R= \reln{R^\da}$.
    Likewise, $R^\ua \leq S^\ua$ if and only if 
        $\relnup{R^\ua} \subseteq S$, and $R=\relnup{R^\ua}$.
\end{proof}

Any $R\subseteq A\times B$ can be regarded as forming 
    a polarity $(X,Y,R)$.
We occasionally need to distinguish between $R$ as a ``raw'' 
    binary relation and $(X,Y,R)$ as a polarity, 
    so for emphasis we may write $\pol(R)$ for the polarity.
And for a polarity $\mathcal A$, we may write $\rel(\mathcal A)$ to highlight 
	the incidence relation as a relation.

For a polarity $\mathcal A$, let $\cl_{\mathcal A}$ denote 
    the composite operation ${\mathcal A}^\da {\mathcal A}^\ua$,
    and $\cl^{\mathcal A}$ denote 
    ${\mathcal A}^\ua {\mathcal A}^\da$.
The complete lattices of fixpoints are $\Cl^-(\mathcal A)$ 
    and $\Cl^+(\mathcal A)$. 
Note that $\Cl^-(\mathcal A)$ is isomorphic to 
    $\Cl^+(\mathcal A)^\partial$ with ${\mathcal A}^\da$ 
    and ${\mathcal A}^\ua$ cutting down to the isomorphisms.

For mere type checking reasons, these considerations suggest 
    that a general morphism from $\mathcal{A}$ to $\mathcal{B}$
    should be a binary relation $R\subseteq A^-\times B^+$ 
    that respects the structure of $\mathcal A$ and $\mathcal B$.
The essential equality that gives rise to the complete lattice
    $\Cl^-(\mathcal A)$ is 
    ${\mathcal A}^\da {\mathcal A}^\ua {\mathcal A}^\da 
        = {\mathcal A}^\da$, 
    written twice to emphasize associativity, is 
\begin{align*} 
    \cl_{\mathcal A}{\mathcal A}^\da &= {\mathcal A}^\da\\
    &= {\mathcal A}^\da \cl^{\mathcal A}
\end{align*}
So, we require two analogous compatibility conditions 
    on a relation $R\subseteq A^-\times B^+$:
\begin{align*}
    \cl_{\mathcal A} R^\da&= R^\da\\
    &= R^\da \cl^{\mathcal B}
\end{align*}
Say that $R$ is \emph{compatible with $\mathcal{A}$ on the left} 
    if the first equality holds, 
    is \emph{compatible with $\mathcal{B}$ on the right} 
    if the second holds, and is simply 
    \emph{compatible with $\mathcal A$ and $\mathcal B$} 
    if both. 
In this case, we write $\pfromto{R}{\mathcal A}{\mathcal B}$ 
    to indicate compatibility.
The following lemma provides other tests for compatibility.

\begin{lemma}\label{lem:compatibility}
    For polarity $\mathcal A$, set $Y$ and relation
        $R\subseteq A^-\times Y$, the following are equivalent:
    \begin{enumerate}
    \item $\cl_{\mathcal A} \leq R^\da R^\ua$;
    \item $\cl_{\mathcal A} R^\da \leq R^\da$;
    \item $R^\ua \leq R^\ua \cl_{\mathcal A}$; 
    \item $X\times Y\subseteq R$ implies
        $\cl_{\mathcal A}(X)\times Y\subseteq R$
    \item $\reln{\cl_{\mathcal A} R^\da} \subseteq R$; and 
    \item $R$ is compatible with $\mathcal A$ on the left.
    \end{enumerate}
    Likewise, for polarity $\mathcal B$, 
        set $X$ and relation $R\subseteq X\times B^+$, 
        the following are equivalent.
    \begin{enumerate}
    \item $\cl^{\mathcal B} \leq R^\ua R^\da$;
    \item $\cl^{\mathcal B} R^\ua \leq R^\ua$;
    \item $R^\da \leq R^\da \cl^{\mathcal B}$; 
    \item $X\times Y\subseteq R$ implies 
        $X\times \cl^{\mathcal B}(Y)\subseteq R$
    \item $\relnup{\cl^{\mathcal B} R^\ua} \subseteq R$; and
    \item $R$ is compatible with $\mathcal B$ on the right
    \end{enumerate}  
\end{lemma}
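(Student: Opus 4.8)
The plan is to prove all six conditions of the first list equivalent by making condition (2) the hub, and then to deduce the second list by applying the first list to a ``reversed'' polarity. Two standing observations organise the first part. Since $\cl_{\mathcal A}$ is a closure operator on $\powerset(A^-)$ it is monotone, idempotent, and inflationary; in the pointwise order on antitone maps this forces $R^\da\leq\cl_{\mathcal A}R^\da$ and $R^\ua\cl_{\mathcal A}\leq R^\ua$ unconditionally, so each ``$\leq$'' appearing in (1)--(3) is in fact an equality. Moreover, $R^\da$ and $R^\ua$ form an antitone Galois connection, so $R^\da R^\ua$ is a closure operator, its fixpoints are exactly the sets $R^\da(W)$, and $R^\da R^\ua R^\da=R^\da$.

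With these in hand, most of the equivalences should be quick. I would get (2)$\iff$(6) straight from the first observation, and (2)$\iff$(5) straight from the adjunction $\reln{f}\subseteq R\iff f\leq R^\da$ of Lemma~\ref{lem:bracket-da} applied with $f=\cl_{\mathcal A}R^\da$. For (1)$\iff$(2) I would use the elementary fact that two closure operators satisfy $c\leq d$ iff every $d$-fixpoint is $c$-closed: with $c=\cl_{\mathcal A}$ and $d=R^\da R^\ua$, and knowing the $d$-fixpoints are the sets $R^\da(W)$, this says precisely $\cl_{\mathcal A}R^\da(W)=R^\da(W)$ for all $W$, which is (2). For (4)$\iff$(2) I would rewrite each membership $X\times Y\subseteq R$ as $X\subseteq R^\da(Y)$ using the list of equivalences recorded just before the first Remark, so that (4) reads ``$X\subseteq R^\da(Y)$ implies $\cl_{\mathcal A}(X)\subseteq R^\da(Y)$'' for all $X,Y$; specialising $X=R^\da(Y)$ yields (2), and monotonicity of $\cl_{\mathcal A}$ yields the converse.

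The one step I expect to require genuine care -- and the main obstacle -- is (2)$\iff$(3), since it passes between the $A^-$-side and the $Y$-side and the variances must be tracked precisely. I would run it through the defining bijection $X\subseteq R^\da(W)\iff W\subseteq R^\ua(X)$ of the Galois connection. For (2)$\Rightarrow$(3): to show $R^\ua(X)\subseteq R^\ua(\cl_{\mathcal A}(X))$ it suffices to check $\cl_{\mathcal A}(X)\subseteq R^\da(\{\beta\})$ for each $\beta\in R^\ua(X)$; but $X\subseteq R^\da(\{\beta\})$, and (2) says $R^\da(\{\beta\})$ is $\cl_{\mathcal A}$-closed, so applying the closure operator keeps us inside it. For (3)$\Rightarrow$(2): the containment $\cl_{\mathcal A}R^\da(W)\subseteq R^\da(W)$ transposes to $W\subseteq R^\ua(\cl_{\mathcal A}R^\da(W))$, and the right-hand side equals $R^\ua R^\da(W)\supseteq W$ once (3) is invoked.

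Finally I would obtain the second list by applying the first list to the polarity $(B^+,B^-;{\mathcal B}^\intercal)$, whose ``left'' closure operator $({\mathcal B}^\intercal)^\da({\mathcal B}^\intercal)^\ua$ equals ${\mathcal B}^\ua{\mathcal B}^\da=\cl^{\mathcal B}$, to the set $X$, and to the converse relation $R^\intercal\subseteq B^+\times X$, for which $(R^\intercal)^\da=R^\ua$ and $(R^\intercal)^\ua=R^\da$. Under these substitutions the six clauses of the second list become, term by term, the six clauses of the first list (with $\reln{\cdot}$ turning into $\relnup{\cdot}$ upon passing to converses). The only compatibility worth double-checking is that ``$R^\intercal$ compatible on the left'', namely $\cl^{\mathcal B}R^\ua=R^\ua$, matches ``$R$ compatible with $\mathcal B$ on the right'', namely $R^\da\cl^{\mathcal B}=R^\da$; but that is exactly the instance of (2)$\iff$(3) already established.
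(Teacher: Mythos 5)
Your argument is correct and follows essentially the same route as the paper: within the first list you use the Galois connection of $R^\da$ and $R^\ua$, the adjunction of Lemma~\ref{lem:bracket-da} for (5), inflationarity of $\cl_{\mathcal A}$ for (6), and the rewriting $X\times Y\subseteq R\iff X\subseteq R^\da(Y)$ for (4), and you obtain the second list exactly as the paper does, by replacing $R$ with $R^\intercal$ and $\mathcal A$ with ${\mathcal B}^\intercal$ (with the right care about $\reln{\cdot}$ versus $\relnup{\cdot}$ and the left/right forms of compatibility). The only slip is the unused aside that every ``$\leq$'' in (1)--(3) is automatically an equality: this is true for (2) and (3), whose reverse inequalities hold unconditionally, but false for (1) (for instance, if every subset of $A^-$ is $\cl_{\mathcal A}$-closed and $R=A^-\times Y$, then (1) holds strictly), and since none of your steps relies on that claim for (1), the proof is unaffected.
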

\begin{proof}
    The second set of equivalences is obtained from the first 
        by replacing $R$ with $R^\intercal$, 
        and ${\mathcal A}$ with ${\mathcal B}^\intercal$.
    So we only need to prove the first set.
    
    (1) implies (2) because ${\mathcal A}^\da {\mathcal A}^\ua$ 
        is monotonic and $R^\da = R^\da R^\ua R^\da$. 
    If (2), then ${\mathcal A}^\da {\mathcal A}^\ua R^\da R^\ua
        \leq R^\da R^\ua$. 
    But ${\mathcal A}^\da {\mathcal A}^\ua$ is monotone 
        and $R^\da R^\ua$ is inflationary, 
        so ${\mathcal A}^\da {\mathcal A}^\ua 
            \leq {\mathcal A}^\da {\mathcal A}^\ua R^\da R^\ua$.
    Since $R$ determines a Galois connection (1) and (3)
        are equivalent.

    Equivalence of (2) and (4) is due to the fact that 
        $X\times Y\subseteq R$ is equivalent to 
        $X\subseteq R^\da(Y)$.
    Equivalence of (2) and (5) is due to the adjunction 
        between $\reln{-}$ and $(-)^\da$.
    Finally, since ${\mathcal A}^\da{\mathcal A}^\ua$ 
        is inflationary, (2) and (6) are equivalent.
\end{proof}

Condition (4) of the lemma along with its corresponding condition 
    for compatibility on right means that 
    $R\subseteq A^-\times B^+$ is compatible if and only if 
    $X\times Y\subseteq R$ implies 
    $\cl_{\mathcal A}(X)\times \cl^{\mathcal B}(Y) \subseteq R$. 

One can picture a compatible relation between polarities 
    in a simple graphical way.
We indicate a binary relation as a non-horizontal line drawn
    between two (names of) sets:

\[
\begin{tikzpicture}
 \node(Xp) at (0,1) {$A^-$};
 \node(Xm) at (0,-1) {$A^+$};
 \draw[-] (Xm.north) to node[left] {${\mathcal A}$} (Xp.south);
\end{tikzpicture}
\]

We let the ``altitude'' of the two sets indicate the domain 
    and codomain of the relation.
The relative horizontal position does not matter, 
    but usually a relation drawn as a vertical line indicates 
    a polarity (an object in our category).
So the above is a picture of the polarity $\mathcal{A}$.

Lemma \ref{lem:compatibility}(1) tells us that compatibility 
    of $R$ with $\mathcal{A}$ on the left means that 
    ${\mathcal A}^\da {\mathcal A}^\ua \leq R^\da R^\ua$. 
So we decorate vertices in the lower part of a diagram 
    with $\leq$ or $\geq$ to indicate this relation.
Using similar decorations we can indicate compatibilities 
    on the right.
With these conventions, a relation $R$ that is compatible 
    with $\mathcal{A}$ on the left and $\mathcal{B}$ 
    on the right can be pictured as

\[
\begin{tikzpicture}
 \node(Xp) at (0,1) {$A^-$};
 \node(Xm) at (0,-1) {$A^+$};
 \node(Yp) at (1.5,1) {$B^+$};
 \node(Ym) at (1.5,-1) {$B^-$};

 \draw[-] (Xm.north) to node[right, pos=.4] {${}_\leq$} node[left] {$\mathcal A$} (Xp.south);
 \draw[-] (Xm.north) to node[below] {$R$} (Yp.south);
 \draw[-] (Ym.north) to node[left, pos=.6] {${}^\geq$} node[right] {$\mathcal B$} (Yp.south);
\end{tikzpicture}
\]

We now have the data to define the category $\Pol$ having 
    as objects all polarities and as morphisms from 
    $\mathcal{A}$ to $\mathcal{B}$ all binary relations 
    $R\subseteq A^-\times B^+$
    satisfying the compatibility conditions. 
Of course, it remains to define composition in the category, 
    and check that it works. 

\begin{lemma}\label{lem:cxt-composition}
    Suppose $\pfromto R{\mathcal A}{\mathcal B}$ and 
        $\pfromto{S}{\mathcal B}{\mathcal C}$ 
        are compatible relations as indicated. 
    Then $R^\da \mathcal B^\ua S^\da$ sends unions in 
        $\powerset(C^+)$ to intersections in $\powerset(A^-)$.
    Moreover, the relation $\reln{R^\da \mathcal B^\ua S^\da}$ 
        is compatible with $\mathcal{A}$ on the left 
        and $\mathcal{C}$ on the right.
\end{lemma}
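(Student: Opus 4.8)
The plan is to prove the two assertions in order, with the second falling out almost for free once the first is settled. Throughout, write $f \defeq R^\da\mathcal{B}^\ua S^\da$ for the composite antitone map $\powerset(C^+)\to\powerset(A^-)$ and $Q\defeq\reln{f}$.

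For the first assertion I would unwind $f$ one factor at a time on a family $\{Z_i\}_i$ of subsets of $C^+$. Being one leg of a Galois connection, $S^\da$ already sends $\bigcup_i Z_i$ to $\bigcap_i S^\da(Z_i)$; and since $S$ is compatible with $\mathcal{B}$ on the left, Lemma~\ref{lem:compatibility} gives $\cl_{\mathcal{B}}S^\da = S^\da$, so each $S^\da(Z_i)$, hence also their intersection, lies in $\Cl^-(\mathcal{B})$. Now invoke the dual isomorphism $\mathcal{B}^\ua\colon\Cl^-(\mathcal{B})\to\Cl^+(\mathcal{B})$ recorded before Lemma~\ref{lem:compatibility}: it carries the meet $\bigcap_i S^\da(Z_i)$, computed in $\Cl^-(\mathcal{B})$ --- where meets are just set-theoretic intersections --- to the \emph{join} of $\{\mathcal{B}^\ua S^\da(Z_i)\}_i$ computed in $\Cl^+(\mathcal{B})$, namely $\cl^{\mathcal{B}}\bigl(\bigcup_i \mathcal{B}^\ua S^\da(Z_i)\bigr)$. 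Finally apply $R^\da$: because $R$ is compatible with $\mathcal{B}$ on the right, $R^\da = R^\da\cl^{\mathcal{B}}$, so the closure operator drops out and we are left with $R^\da\bigl(\bigcup_i \mathcal{B}^\ua S^\da(Z_i)\bigr) = \bigcap_i R^\da\mathcal{B}^\ua S^\da(Z_i) = \bigcap_i f(Z_i)$, again using that $R^\da$ is a Galois leg. Chaining these equalities gives $f(\bigcup_i Z_i) = \bigcap_i f(Z_i)$; the empty family needs no separate treatment, since the whole argument is phrased through closure operators and Galois connections.

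For the second assertion, the first one together with the last clause of Lemma~\ref{lem:bracket-da} yields $Q^\da = \reln{f}^\da = f = R^\da\mathcal{B}^\ua S^\da$. Compatibility of $Q$ with $\mathcal{A}$ on the left is the equation $\cl_{\mathcal{A}}Q^\da = Q^\da$, and this is obtained by composing the already-available identity $\cl_{\mathcal{A}}R^\da = R^\da$ (compatibility of $R$ with $\mathcal{A}$ on the left) on the right with $\mathcal{B}^\ua S^\da$. Dually, compatibility of $Q$ with $\mathcal{C}$ on the right, $Q^\da = Q^\da\cl^{\mathcal{C}}$, is obtained by composing $S^\da = S^\da\cl^{\mathcal{C}}$ (compatibility of $S$ with $\mathcal{C}$ on the right) on the left with $R^\da\mathcal{B}^\ua$.

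The step that carries real content is the middle move in the first assertion: one must notice that the bare intersection $\bigcap_i S^\da(Z_i)$ has to be reinterpreted as a meet inside $\Cl^-(\mathcal{B})$ before the dual isomorphism applies, and that the isomorphism returns the $\Cl^+(\mathcal{B})$-join --- that is, the $\cl^{\mathcal{B}}$-closure of the union --- rather than the plain union. Precisely here both hypotheses, ``$S$ compatible with $\mathcal{B}$ on the left'' and ``$R$ compatible with $\mathcal{B}$ on the right,'' are consumed, and this is the reason composition through $\mathcal{B}$ is well defined at all. Everything else is routine manipulation of the adjunctions of Lemma~\ref{lem:bracket-da} and the arithmetic of closure operators.
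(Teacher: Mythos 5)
Your proof is correct and follows essentially the same route as the paper's: the paper inserts $\mathcal B^\da\mathcal B^\ua$ via left-compatibility of $S$, pulls the intersection through $\mathcal B^\da$ as a union, and then erases $\mathcal B^\ua\mathcal B^\da$ by right-compatibility of $R$, which is exactly your computation phrased concretely instead of via the anti-isomorphism $\Cl^-(\mathcal B)\cong\Cl^+(\mathcal B)^\partial$. Your treatment of the compatibility claims (using $\reln{f}^\da=f$ from Lemma~\ref{lem:bracket-da} and then composing the identities $\cl_{\mathcal A}R^\da=R^\da$ and $S^\da\cl^{\mathcal C}=S^\da$) is likewise the paper's argument.
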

\begin{proof}
    Let $\{Z_i\}_i$ be a family of subsets of $C^+$.
    Then the identities $R^\da \mathcal B^\da\mathcal B^\ua = R^\da$ and $\mathcal B^\da\mathcal B^\ua S^\da$, 
    	together with the fact that $\mathcal R^\da$, $\mathcal B^\da$, and $S^\da$  
    	send unions to intersections yield the calculation
    \begin{align*}
        R^\da {\mathcal B}^\ua S^\da(\bigcup_i Z_i) 
        &= R^\da {\mathcal B}^\ua(\bigcap_i S^\da(Z_i))\\
        &= R^\da {\mathcal B}^\ua(\bigcap_i {\mathcal B}^\da {\mathcal B}^\ua S^\da (Z_i))\\
        & = R^\da {\mathcal B}^\ua {\mathcal B}^\da (\bigcup_i {\mathcal B}^\ua S^\da(Z_i))\\
        &= R^\da(\bigcup_i {\mathcal B}^\ua S^\da(Z_i))\\
        &= \bigcap_i(R^\da {\mathcal B}^\ua S^\da(Z_i))
    \end{align*}

    Compatibility of the result with $\mathcal A$ 
        on the left follows from compatibilty of $R$ with 
        $\mathcal A$ on the left, since
    \begin{align*}
    {\mathcal A}^\da {\mathcal A}^\ua \reln{R^\da {\mathcal B}^\ua S^\da}^\da &= {\mathcal A}^\da {\mathcal A}^\ua R^\da {\mathcal B}^\ua S^\da\\
    & = R^\da {\mathcal B}^\ua S^\da\\
    &= \reln{R^\da {\mathcal B}^\ua S^\da}^\da
    \end{align*}
    Compatibility with $\mathcal C$ is proved similarly.
\end{proof}

For relations $R$ and $S$ with compatibility as above, 
    define $R\fatsemi S\defeq\reln{R^\da {\mathcal B}^\ua S^\da}$.  
According to the foregoing lemma,
    $(R\fatsemi S)^\da = R^\da {\mathcal B}^\ua S^\da$.  
Note that we have written composition 
    in the ``diagramatic'' order for relation composition, 
    not the ``applicative'' order for function composition.  
Since composition is defined with respect 
    to a particular identity morphism 
    (the incidence relation for $\mathcal B$), 
    we sometimes write $R\fatsemi_{\mathcal B} S$ 
    to emphasize this.
Composition can be pictured using our diagrams 
    as in Figure \ref{fig:composition}.
\begin{figure}[ht]
  \centering
  \begin{tikzpicture}
    \node(Xp) at (0,1) {$A^-$};
    \node(Xm) at (0,-1) {$A^+$};
    \node(Yp) at (1.5,1) {$B^+$};
    \node(Ym) at (1.5,-1) {$B^-$};
    \node(Zp) at (3,1) {$C^+$};
    \node(Zm) at (3,-1) {$C^-$};

    \draw[-] (Xm.north) to node[right, pos=.4] {${}_{\leq}$} node[left] {${\mathcal A}$} (Xp.south);
    \draw[-] (Xm.north) to node[above] {$R$} (Yp.south);
    \draw[-] (Ym.north) to node[left, pos=.6] {${}^\geq$} node[right, pos=.4] {${}_\leq$} node[below left] {${\mathcal B}$} (Yp.south);
    \draw[-] (Ym.north) to node[above] {$S$} (Zp.south);
    \draw[-] (Zm.north) to node[left,pos=.6] {${}^\geq$} node[right] {${\mathcal C}$} (Zp.south);	
    \end{tikzpicture}
  
  \begin{tikzpicture}
    \node(Xp) at (0,1) {$A^-$};
    \node(Xm) at (0,-1) {$A^+$};
    \node(Zp) at (3,1) {$C^+$};
    \node(Zm) at (3,-1) {$C^-$};

      \draw[-] (Xm.north) to node[right,pos=.35] {${}_\leq$} node[left] {${\mathcal A}$} (Xp.south);
      \draw[-] (Xm.north) to node[above] {$R\fatsemi S$}(Zp.south);
      \draw[-] (Zm.north) to node[right] {${\mathcal C}$} node[left, pos=.65] {${}^\geq$} (Zp.south);
  \end{tikzpicture}
  \caption{Composition of compatible relations}
  \label{fig:composition}
\end{figure}
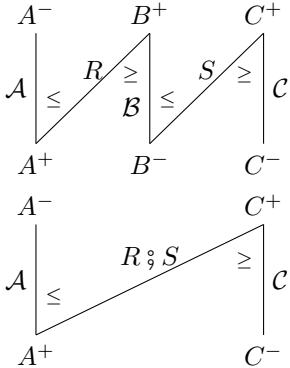

Compatibility with $\mathcal{A}$ on the left and $\mathcal B$ 
    on the right is precisely the property that ensures 
    that a polarity is the identity morphism for 
    $\fatsemi$ composition.
Associativity of composition is easily verified: 
    for compatible relations 
    $\pfromto R{\mathcal A}{\mathcal B}$, 
    $\pfromto S{\mathcal B}{\mathcal C}$ 
    and $\pfromto T{\mathcal C}{\mathcal D}$,
\[
(R\fatsemi S)\fatsemi T = \reln{(R\fatsemi S)^\da {\mathcal C}^\ua T^\da} =
\reln{R^\da {\mathcal B}^\da S^\da {\mathcal C}^\ua T^\da} = R\fatsemi(S\fatsemi T).
\] 
This leads to the definition of our relational category 
    of polarities.

\begin{definition}\label{def:category-cxt}
  The category $\Pol$ consists of polarities as the objects and
  compatible relations as the morphisms. 
  Composition is defined by
  $\fatsemi$.
  The incidence relation $\mathcal A$ is the identity morphism on the polarity $\mathcal A$.
\end{definition}

Clearly, an arbitrary relation $R\subseteq A^-\times B^+$ will not
necessarily be compatible with polarities $\mathcal A$ and
$\mathcal B$. 
One can hope that a suitable compatible relation can
be generated from $R$.

\begin{lemma}\label{lem:cxt-homsets-are-complete}
  For polarities $\mathcal{A}$ and $\mathcal{B}$ the collection of compatible relations 
  	from $\mathcal{A}$ to $\mathcal{B}$ is closed under arbitrary intersection.
\end{lemma}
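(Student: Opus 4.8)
The plan is to avoid manipulating the defining equalities $\cl_{\mathcal{A}} R^\da = R^\da$ and $R^\da = R^\da \cl^{\mathcal{B}}$ directly, and instead use the ``rectangle'' reformulation of compatibility recorded immediately after Lemma~\ref{lem:compatibility}: a relation $R\subseteq A^-\times B^+$ is compatible with $\mathcal{A}$ and $\mathcal{B}$ if and only if $X\times Y\subseteq R$ implies $\cl_{\mathcal{A}}(X)\times\cl^{\mathcal{B}}(Y)\subseteq R$. That condition is visibly preserved by intersections, which is exactly what we need.

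Concretely, I would let $\{R_i\}_{i\in I}$ be a family of compatible relations from $\mathcal{A}$ to $\mathcal{B}$ and set $R\defeq\bigcap_{i\in I}R_i$. Suppose $X\times Y\subseteq R$. Then $X\times Y\subseteq R_i$ for every $i$, so by compatibility of $R_i$ (condition~(4) of Lemma~\ref{lem:compatibility} together with its right-hand analogue) we get $\cl_{\mathcal{A}}(X)\times\cl^{\mathcal{B}}(Y)\subseteq R_i$ for every $i$. Intersecting over $i$ gives $\cl_{\mathcal{A}}(X)\times\cl^{\mathcal{B}}(Y)\subseteq\bigcap_{i\in I}R_i=R$, which is precisely the rectangle condition for $R$. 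Hence $R$ is compatible, i.e.\ a morphism $\mathcal{A}\to\mathcal{B}$, and the family of such morphisms is a closure system inside $\powerset(A^-\times B^+)$ (the empty intersection being the full relation $A^-\times B^+$, which trivially satisfies the rectangle condition). A routine alternative, if one prefers to stay at the level of the Galois maps, is to note that forward images distribute over the intersection defining $R$, so $R^\da(Z)=\bigcap_i R_i^\da(Z)$ for every $Z\subseteq C^+$; each $R_i^\da(Z)$ is $\cl_{\mathcal{A}}$-closed by left-compatibility of $R_i$, a closure system is closed under intersections, and so $R^\da(Z)$ is $\cl_{\mathcal{A}}$-closed as well, giving $\cl_{\mathcal{A}}R^\da=R^\da$; the symmetric computation with $R^\ua$ and $\cl^{\mathcal{B}}$ handles the right.

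I do not expect a genuine obstacle here; the only point requiring a moment's care is to work with the \emph{closure-system} formulation rather than with the closure operators $\cl_{\mathcal{A}}$, $\cl^{\mathcal{B}}$ themselves, since closure operators need not preserve arbitrary intersections whereas the corresponding systems of closed sets always do. Once the right characterization from Lemma~\ref{lem:compatibility} is in hand, either route is a one-line verification, and this lemma is what licenses speaking of the least compatible relation containing a given $R\subseteq A^-\times B^+$, as anticipated in the preceding paragraph of the text.
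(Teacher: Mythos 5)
Your proof is correct and follows essentially the same route as the paper, which also deduces the lemma directly from the rectangle characterization ($X\times Y\subseteq R$ implies $\cl_{\mathcal A}(X)\times\cl^{\mathcal B}(Y)\subseteq R$) and notes that this condition is manifestly preserved under arbitrary intersections. Your spelled-out verification, and the alternative via closure systems of $\cl_{\mathcal A}$-closed sets, are just more explicit versions of what the paper dismisses as obvious.
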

\begin{proof}
This is obvious from the characterization of compatibility by the condition 
	that $X\times Y\subseteq R$ implies $\cl_{\mathcal A}(X)\times \cl^{\mathcal B}(Y)\subseteq R$.
\end{proof}

So any relation $R\subseteq
A^-\times B^+$ has a smallest compatible relation $R^*$ containing it.
We call this the \emph{compatibilization of $R$}.

\begin{remark}
Compatibilization of a given relation may involve transfinite recursion. 
This is because the operator $c$ sending $R$ to 
\[
c(R) =\bigcup_{X\times Y\subseteq R}\cl_{\mathcal A}(X)\times \cl^{\mathcal B}(Y)
\]
is monotonic and inflationary, but may not be idempotent.
So the process by which $R^*$ is obtained may need to pass through transfinitely 
	many iterates by letting $R^0 = R$, $R^{\rho+1} = c(R^\rho)$ for each ordinal, 
	and $R^\lambda = \bigcup_{\rho<\lambda} R^\rho$ for limits.
Then $R^*$ is $R^\rho$ at the first stage for which this is a fixpoint of $c$.
\end{remark}

The category $\Pol$ is, in fact, an $\INF$ enriched category.

\begin{lemma}
    For any family $\{\pfromto {R_i}{\mathcal A}{\mathcal B}\}_i$ 
    of compatible relations, and any $Y\subseteq B^+$, 
    it is the case that $(\bigcap_i R_i)^\da(Y) = \bigcap_i R_i^\da(Y)$. 
\end{lemma}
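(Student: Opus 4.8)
The plan is to unwind both sides directly from the definition of $(-)^\da$, so that the claim collapses to an interchange of two intersections. Recall from the list of equivalences preceding Lemma~\ref{lem:bracket-da}, specialised to a singleton $X=\{a\}$, that $a\in R^\da(Y)$ iff $\{a\}\times Y\subseteq R$; equivalently, by definition, $R^\da(Y)=\bigcap_{\beta\in Y}R^\intercal[\beta]$.

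First I would note that taking converses commutes with arbitrary intersection of relations: for every $\beta\in B^+$ we have $(\bigcap_i R_i)^\intercal[\beta]=\{a\mid (a,\beta)\in\bigcap_i R_i\}=\bigcap_i R_i^\intercal[\beta]$. Then, since intersections of subsets of $A^-$ commute with one another,
\[
\Bigl(\bigcap_i R_i\Bigr)^\da(Y)=\bigcap_{\beta\in Y}\Bigl(\bigcap_i R_i\Bigr)^\intercal[\beta]=\bigcap_{\beta\in Y}\bigcap_i R_i^\intercal[\beta]=\bigcap_i\bigcap_{\beta\in Y}R_i^\intercal[\beta]=\bigcap_i R_i^\da(Y),
\]
which is the desired identity. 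Alternatively, one can argue elementwise, using the singleton form of the equivalences above: $a\in(\bigcap_i R_i)^\da(Y)$ iff $\{a\}\times Y\subseteq\bigcap_i R_i$ iff $\{a\}\times Y\subseteq R_i$ for every $i$ iff $a\in R_i^\da(Y)$ for every $i$ iff $a\in\bigcap_i R_i^\da(Y)$.

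The point worth flagging is that, at the level of this bare identity, there is essentially nothing to prove and no obstacle to overcome: it holds for an arbitrary family of binary relations $R_i\subseteq A^-\times B^+$ and makes no use of compatibility. The role of the hypothesis that each $R_i$ is compatible is purely contextual: combined with Lemma~\ref{lem:cxt-homsets-are-complete} it guarantees that $\bigcap_i R_i$ is again a morphism $\mathcal A\looparrowright\mathcal B$, so that the displayed equation genuinely computes the meet of the $R_i$ in the hom-poset $\Pol(\mathcal A,\mathcal B)$ and shows that this meet is ``pointwise'' in the $(-)^\da$ presentation. That pointwise description is exactly what is then needed to verify that $\fatsemi$-composition preserves such meets, and hence that $\Pol$ is $\INF$-enriched.
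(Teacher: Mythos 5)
Your proof is correct and follows essentially the same route as the paper: the paper likewise argues elementwise that $a\in(\bigcap_i R_i)^\da(Y)$ iff $a\mathrel{R_i}\beta$ for all $\beta\in Y$ and all $i$, and concludes by exchanging the quantifiers, which is exactly your interchange of intersections. Your observation that compatibility plays no role in the identity itself, and only matters for the enrichment argument that follows, is also accurate.
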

\begin{proof}
    For $Y\subseteq  C^+$, $a\in (\bigcap_i R_i)^\da(Y)$
    	if and only if for every $\beta\in Y$ and every $i$, 
    	it is the case that $a \mathrel{R_i}\beta$. 
    So the claim holds by exchanging the quantifiers. 
\end{proof}

\begin{lemma}\label{lem:hom-complete}
	Composition in $\Pol$ preserves arbitrary intersections of compatible relations.
\end{lemma}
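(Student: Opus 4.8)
The plan is to read the statement in the evident way for a composition bifunctor, namely as preservation in each variable separately, since that is the form that will be used later. So the two claims to prove are: for any family $\{\pfromto{R_i}{\mathcal A}{\mathcal B}\}_i$ and any $\pfromto{S}{\mathcal B}{\mathcal C}$, that $(\bigcap_i R_i)\fatsemi S = \bigcap_i (R_i \fatsemi S)$; and for any $\pfromto{R}{\mathcal A}{\mathcal B}$ and any family $\{\pfromto{S_i}{\mathcal B}{\mathcal C}\}_i$, that $R\fatsemi(\bigcap_i S_i) = \bigcap_i (R\fatsemi S_i)$. Two general facts keep this manageable. First, a relation $T$ is recovered from $T^\da$ via $\reln{T^\da}=T$ (Lemma \ref{lem:bracket-da}), and $R\fatsemi S$ is by construction $\reln{R^\da\mathcal B^\ua S^\da}$ with $(R\fatsemi S)^\da = R^\da\mathcal B^\ua S^\da$ (Lemma \ref{lem:cxt-composition}); hence in each case it suffices to prove the corresponding equality of maps $\powerset(C^+)\to\powerset(A^-)$ obtained by applying $(-)^\da$ to both sides, after which $\reln{-}$ transports it back to relations. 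Second, the lemma just above gives $(\bigcap_i T_i)^\da(Y)=\bigcap_i T_i^\da(Y)$ for any family of relations, by a quantifier swap.

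The first claim then falls out with essentially no work. Evaluating at $Z\subseteq C^+$ and abbreviating $W=\mathcal B^\ua S^\da(Z)$, the preceding lemma gives $(\bigcap_i R_i)^\da(W)=\bigcap_i R_i^\da(W)=\bigcap_i(R_i\fatsemi S)^\da(Z)$; and since each $R_i\fatsemi S$ is compatible (Lemma \ref{lem:cxt-composition}), the preceding lemma again rewrites the last term as $(\bigcap_i(R_i\fatsemi S))^\da(Z)$. Applying $\reln{-}$ closes this case; note that no compatibility of the $R_i$ with $\mathcal A$ is used.

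The second claim is the one with content, and the idea is to move $\mathcal B^\ua$ across an intersection using the two ``sides'' of compatibility available at $\mathcal B$. Fix $Z\subseteq C^+$ and put $X_i=S_i^\da(Z)$. Because each $S_i$ is compatible with its domain $\mathcal B$ on the left, each $X_i$ is $\cl_{\mathcal B}$-closed, i.e.\ $\mathcal B^\da\mathcal B^\ua(X_i)=X_i$; and because $R$ is compatible with its codomain $\mathcal B$ on the right, $R^\da=R^\da\cl^{\mathcal B}=R^\da\mathcal B^\ua\mathcal B^\da$. Using these together with the fact that $R^\da$ and $\mathcal B^\da$ send unions to intersections, the computation runs
\begin{align*}
\bigcap_i (R\fatsemi S_i)^\da(Z)
&= \bigcap_i R^\da\mathcal B^\ua(X_i)\\
&= R^\da\Bigl(\bigcup_i \mathcal B^\ua(X_i)\Bigr)\\
&= R^\da\mathcal B^\ua\mathcal B^\da\Bigl(\bigcup_i \mathcal B^\ua(X_i)\Bigr)\\
&= R^\da\mathcal B^\ua\Bigl(\bigcap_i \mathcal B^\da\mathcal B^\ua(X_i)\Bigr)\\
&= R^\da\mathcal B^\ua\Bigl(\bigcap_i X_i\Bigr)
= \bigl(R\fatsemi(\textstyle\bigcap_i S_i)\bigr)^\da(Z),
\end{align*}
where the final step also uses the preceding lemma in the form $\bigcap_i X_i=\bigcap_i S_i^\da(Z)=(\bigcap_i S_i)^\da(Z)$. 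Since the left-hand end equals $(\bigcap_i(R\fatsemi S_i))^\da(Z)$, applying $\reln{-}$ yields $\bigcap_i(R\fatsemi S_i)=R\fatsemi(\bigcap_i S_i)$.

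The only real obstacle I anticipate is bookkeeping: one must keep straight that it is compatibility of $R$ with its \emph{codomain} on the \emph{right}, and compatibility of each $S_i$ with its \emph{domain} on the \emph{left}, that are in play — exactly the two conditions that sit at $\mathcal B$, and exactly the ones needed to commute $\mathcal B^\ua$ past the intersection. Beyond keeping the sides straight, everything is the Galois-connection calculus already packaged in Lemmas \ref{lem:bracket-da} through \ref{lem:cxt-composition}.
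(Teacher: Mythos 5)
Your proof is correct and follows essentially the same route as the paper: reduce to an equality of the maps $(-)^\da$ via $\reln{-}$, use the pointwise identity $(\bigcap_i T_i)^\da=\bigcap_i T_i^\da$, and then commute $\mathcal B^\ua$ past the intersection by combining left-compatibility of the $S_i$ (closedness of their $\da$-images), the union-to-intersection property of $\mathcal B^\da$ and $R^\da$, and right-compatibility of $R$ to absorb $\mathcal B^\ua\mathcal B^\da$. The only difference is organizational — the paper runs one computation with the intersection sandwiched as $Q\fatsemi(\bigcap_i R_i)\fatsemi S$, whereas you treat the two variables separately (observing, correctly, that the left-variable case needs no compatibility at all) — but the underlying calculation is the same.
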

\begin{proof}
	Suppose we have compatible relations $\pfromto Q{\mathcal A}{\mathcal B}$, 
		$\pfromto {R_i}{\mathcal B}{\mathcal C}$ for $i\in I$ and $\pfromto S{\mathcal C}{\mathcal D}$. 
    For $Z\subseteq D^+$, 
\begin{align*}
(Q\fatsemi (\bigcap_i R_i) \fatsemi S)^\da(Z) 
&= Q^\da {\mathcal B}^\ua (\bigcap_i R_i)^\da {\mathcal C}^\ua S^\da (Z)\\
&= Q^\da {\mathcal B}^\ua (\bigcap_i R_i^\da {\mathcal C}^\ua S^\da(Z))\\
&= Q^\da {\mathcal B}^\ua (\bigcap_i \mathcal B^\da\mathcal B^\ua R_i^\da {\mathcal C}^\ua S^\da(Z))\\
&= Q^\da {\mathcal B}^\ua \mathcal B^\da (\bigcup_i \mathcal B^\ua R_i^\da {\mathcal C}^\ua S^\da(Z))\\
&= Q^\da (\bigcup_i \mathcal B^\ua R_i^\da {\mathcal C}^\ua S^\da(Z))\\
&= \bigcap_i (Q^\da \mathcal B^\ua R_i^\da \mathcal C^\ua S^\da(Z))\\
&=\bigcap_i (Q\fatsemi R_i\fatsemi S)^\da(Z)\\
&= (\bigcap_i Q_\fatsemi R_i\fatsemi S)^\da (Z).
\end{align*}
\end{proof}

The combinatorial nature of compatibility means that much of 
the categorical structure of $\Pol$ is quite easily described. 

\begin{lemma}\label{lem:cxt-selfdual}
  $\Pol$ is a self dual category.
\end{lemma}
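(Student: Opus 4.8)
The plan is to exhibit a contravariant functor $\Pol \to \Pol$ that is the identity on the (self-dual) structure of a polarity and squares to the identity. Given a polarity $\mathcal A = (A^-, A^+; \mathcal A)$, I define $\mathcal A^{\mathrm{op}}$ to be the polarity $(A^+, A^-; \mathcal A^\intercal)$: simply swap the two underlying sets and replace the incidence relation by its converse. The key observation is that the closure operators exchange roles: $\cl_{\mathcal A^{\mathrm{op}}} = (\mathcal A^\intercal)^\da (\mathcal A^\intercal)^\ua = \mathcal A^\ua \mathcal A^\da = \cl^{\mathcal A}$, and dually $\cl^{\mathcal A^{\mathrm{op}}} = \cl_{\mathcal A}$, using the equivalences listed before Remark on type checking (namely $(\mathcal A^\intercal)^\da = \mathcal A^\ua$ on the nose as maps $\powerset(A^-)\to\powerset(A^+)$, and $(\mathcal A^\intercal)^\ua = \mathcal A^\da$). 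Hence $\Cl^-(\mathcal A^{\mathrm{op}}) = \Cl^+(\mathcal A)$ and $\Cl^+(\mathcal A^{\mathrm{op}}) = \Cl^-(\mathcal A)$, so passing to the opposite polarity just swaps the pair of dually isomorphic lattices attached to $\mathcal A$ --- this is exactly Birkhoff's self-duality at the level of objects.

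Next I define the functor on morphisms. A compatible relation $\pfromto{R}{\mathcal A}{\mathcal B}$ is by definition a relation $R \subseteq A^- \times B^+$ compatible with $\mathcal A$ on the left and $\mathcal B$ on the right. I send it to $R^\intercal \subseteq B^+ \times A^-$, which I claim is a compatible relation $\pfromto{R^\intercal}{\mathcal B^{\mathrm{op}}}{\mathcal A^{\mathrm{op}}}$. Here $B^+$ is the lower set of $\mathcal B^{\mathrm{op}}$ and $A^-$ is the upper set of $\mathcal A^{\mathrm{op}}$, so the typing is correct. For compatibility: $R^\intercal$ must be compatible with $\mathcal B^{\mathrm{op}}$ on the left, i.e. $\cl_{\mathcal B^{\mathrm{op}}} (R^\intercal)^\da \leq (R^\intercal)^\da$; but $\cl_{\mathcal B^{\mathrm{op}}} = \cl^{\mathcal B}$ and $(R^\intercal)^\da = R^\ua$, so this is precisely $\cl^{\mathcal B} R^\ua \leq R^\ua$, which is condition (2) of the second list in Lemma~\ref{lem:compatibility}, equivalent to $R$ being compatible with $\mathcal B$ on the right. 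Symmetrically, $R^\intercal$ compatible with $\mathcal A^{\mathrm{op}}$ on the right unwinds via $\cl^{\mathcal A^{\mathrm{op}}} = \cl_{\mathcal A}$ and $(R^\intercal)^\ua = R^\da$ to $R$ being compatible with $\mathcal A$ on the left. So the two compatibility conditions for $R$ swap into the two compatibility conditions for $R^\intercal$.

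Then I check functoriality. The identity on $\mathcal A$ is $\rel(\mathcal A) = \mathcal A$, and $\mathcal A^\intercal = \rel(\mathcal A^{\mathrm{op}})$ is the identity on $\mathcal A^{\mathrm{op}}$, so identities are preserved. For composition, given $\pfromto{R}{\mathcal A}{\mathcal B}$ and $\pfromto{S}{\mathcal B}{\mathcal C}$, I must show $(R \fatsemi_{\mathcal B} S)^\intercal = S^\intercal \fatsemi_{\mathcal B^{\mathrm{op}}} R^\intercal$. Since a compatible relation is determined by its $(-)^\da$ image (Lemma~\ref{lem:bracket-da}: $\reln{T^\da} = T$), it suffices to compare $(-)^\da$ of both sides. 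By Lemma~\ref{lem:cxt-composition} and the definition of $\fatsemi$, the right-hand side has
\[
(S^\intercal \fatsemi_{\mathcal B^{\mathrm{op}}} R^\intercal)^\da = (S^\intercal)^\da (\mathcal B^{\mathrm{op}})^\ua (R^\intercal)^\da = S^\ua \mathcal B^\da R^\ua,
\]
using $(\mathcal B^{\mathrm{op}})^\ua = (\mathcal B^\intercal)^\ua = \mathcal B^\da$. Meanwhile $(R \fatsemi_{\mathcal B} S)^\da = R^\da \mathcal B^\ua S^\da$ is a map $\powerset(C^+) \to \powerset(A^-)$; its transpose-as-relation $\reln{\cdots}^\intercal$ is a relation in $C^+ \times A^-$, and I must see that $\reln{R^\da \mathcal B^\ua S^\da}^\intercal = \reln{S^\ua \mathcal B^\da R^\ua}$. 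This is the one genuine computation: it follows because the Galois connection $R^\da \mathcal B^\ua S^\da$ and $S^\ua \mathcal B^\da R^\ua$ are adjoint to each other --- each is obtained from the other by the general fact that the converse of a relation reverses the Galois pair $(T^\da, T^\ua)$, combined with $(\mathcal B^\ua)$ and $(\mathcal B^\da)$ being the two halves of $\mathcal B$'s Galois connection. Concretely, $\alpha \in \reln{R^\da \mathcal B^\ua S^\da}^\intercal[\gamma]$ iff $\alpha \in R^\da \mathcal B^\ua S^\da(\{\gamma\})$, and one shows this symmetric condition equals $\gamma \in \reln{S^\ua \mathcal B^\da R^\ua}^\intercal[\alpha]$ by the chain of equivalences $X \times Z \subseteq T \iff X \subseteq T^\da(Z) \iff Z \subseteq (T^\intercal)^\da(X)$ applied through the three factors. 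I expect this transpose-of-a-composite identity to be the main obstacle; everything else is bookkeeping with the self-duality $\cl_{\mathcal A^{\mathrm{op}}} = \cl^{\mathcal A}$. Finally, $(\mathcal A^{\mathrm{op}})^{\mathrm{op}} = \mathcal A$ and $(R^\intercal)^\intercal = R$ on the nose, so the functor is a strict involution, witnessing that $\Pol$ is self-dual. \qedhere
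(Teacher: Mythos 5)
Your proposal is correct and follows essentially the same route as the paper: define $\mathcal A^\partial=(A^+,A^-;\mathcal A^\intercal)$, send a compatible $\pfromto{R}{\mathcal A}{\mathcal B}$ to $R^\intercal$ (the two compatibility conditions swap), and observe the assignment is a strict involution, hence a dual isomorphism of $\Pol$ with itself. The only point you leave slightly sketchy, the identity $(R\fatsemi_{\mathcal B} S)^\intercal=S^\intercal\fatsemi_{\mathcal B^\partial}R^\intercal$, does hold, but note it uses the compatibility of $S$ with $\mathcal B$ on the left and of $R$ with $\mathcal B$ on the right (to absorb $\cl_{\mathcal B}$ and $\cl^{\mathcal B}$), not just the raw Galois-connection equivalences; the paper itself treats this functoriality check as immediate.
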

\begin{proof}
For a polarity $\mathcal A$, define ${\mathcal A}^\partial = (A^+, A^-; {\mathcal A}^\intercal)$. 
This is also a polarity.  If $\pfromto R{\mathcal A}{\mathcal B}$ is
a compatible relation, then $R^\intercal$ is obviously
compatible with ${\mathcal B}^\partial$ on the left and ${\mathcal A}^\partial$ on the right. 
So defining
$R^\partial = R^\intercal$ yields a contravariant endofunctor.
Clearly, $R^{\partial\partial} = R$, so this is a dual isomorphism.
\end{proof}

Monomorphisms and epimorphisms in $\Pol$ are characterized by the following simple
conditions.

\begin{lemma}\label{lem:cxt-monos}
  A compatible relation $\pfromto R{\mathcal B}{\mathcal C}$ is a monomorphism if and only if
  $R^\da R^\ua \leq \cl_{\mathcal B}$.
  Similarly, $R$ is an epimorphism if and only if $\cl^{\,\mathcal C}\geq R^\ua R^\da$.
\end{lemma}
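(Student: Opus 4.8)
The plan is to exploit the fact, already recorded in the discussion preceding Definition~\ref{def:category-cxt}, that $\Pol$ is $\INF$-enriched and that $\cl_{\mathcal B}$ is itself realized as the identity's associated closure operator $\mathcal B^\da\mathcal B^\ua$. For the monomorphism half, I would translate ``$R$ is monic'' into the statement that for every polarity $\mathcal A$ and every pair of parallel compatible relations $Q_1,Q_2\colon\mathcal A\looparrowright\mathcal B$, the equality $Q_1\fatsemi R = Q_2\fatsemi R$ forces $Q_1 = Q_2$. Since $\relnup{-}$ and $\reln{-}$ are faithful (Lemma~\ref{lem:bracket-da}), $Q_1\fatsemi R = Q_2\fatsemi R$ is equivalent to $Q_1^\da\mathcal B^\ua R^\da = Q_2^\da\mathcal B^\ua R^\da$ as maps $\powerset(C^+)\to\powerset(A^-)$, and $Q_1 = Q_2$ is equivalent to $Q_1^\da = Q_2^\da$. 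So the question becomes: is the operation $f\mapsto f\circ(\mathcal B^\ua R^\da)$ injective on those $f$ arising as $Q^\da$ for a compatible $Q\colon\mathcal A\looparrowright\mathcal B$?

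The key observation is that, by compatibility of $Q$ with $\mathcal B$ on the right, $Q^\da = Q^\da\cl^{\mathcal B} = Q^\da\mathcal B^\ua\mathcal B^\da$, so $Q^\da$ factors through the image of $\mathcal B^\ua$; equivalently $Q^\da$ is determined by its restriction along $\mathcal B^\ua$, i.e.\ by $Q^\da\mathcal B^\ua$ as a map out of $\powerset(B^-)$. Thus $R$ is monic precisely when $g\mapsto g\circ R^\da$ is injective on maps of the form $Q^\da\mathcal B^\ua\colon\powerset(B^-)\to\powerset(A^-)$. Now the point is that every map $\powerset(B^-)\to\powerset(A^-)$ sending unions to intersections is of this form for a suitable $\mathcal A$ and compatible $Q$ — indeed one may take $\mathcal A = \pol(\reln{h})$ for $h\colon\powerset(B^-)\to\powerset(A^-)$ and check compatibility using Lemma~\ref{lem:compatibility}. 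Hence $R$ is monic if and only if precomposition with $R^\da$ is injective on all $\INF$-maps out of $\powerset(B^-)$, and precomposition with $R^\da$ is injective iff $R^\da$ is an epimorphism in the relevant functional sense, which (since these are maps of powersets, hence of free objects) happens iff $R^\da$ has a \emph{surjective} associated relation in the appropriate direction. Unwinding: $R^\da$ as a union-to-intersection map $\powerset(C^+)\to\powerset(B^-)$ is ``surjective enough'' to be left-cancellable exactly when the closure operator it induces on the $B$ side, namely $R^\da R^\ua$, is below $\cl_{\mathcal B} = \mathcal B^\da\mathcal B^\ua$ — because $\cl_{\mathcal B}$ is the \emph{largest} closure operator that is ``invisible'' to compatible $Q$ (those $Q$ only see $\cl_{\mathcal B}$-closed sets), so cancellability is lost precisely when $R$ closes strictly more than $\mathcal B$ does, and it is retained precisely when $R^\da R^\ua \le \cl_{\mathcal B}$. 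This gives the stated criterion for monomorphisms; the epimorphism criterion then follows immediately by applying the self-duality $\Lemma~\ref{lem:cxt-selfdual}$, under which monos of $R$ correspond to epis of $R^\partial = R^\intercal$, the closure operator $\cl_{\mathcal B}$ turns into $\cl^{\mathcal B^\partial} = \cl^{\,\mathcal C}$ on the appropriate side, and $R^\da R^\ua \le \cl_{\mathcal B}$ becomes $R^\ua R^\da \le \cl^{\,\mathcal C}$.

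The concrete steps, in order: (1) reduce monicity of $R$ to injectivity of $g\mapsto g\circ R^\da$ on $\INF$-maps out of $\powerset(B^-)$, using Lemma~\ref{lem:bracket-da} and right-compatibility of test morphisms $Q$; (2) show that every $\INF$-map out of $\powerset(B^-)$ is realized as $Q^\da\mathcal B^\ua$ for a compatible $Q$ with suitably chosen domain (this is where one builds the separating object); (3) identify left-cancellability of $R^\da$ with the condition $R^\da R^\ua \le \cl_{\mathcal B}$, the essential point being that two $\INF$-maps that agree after precomposition with $R^\da$ differ only on sets that $R^\da R^\ua$ does not distinguish, so injectivity holds iff $R^\da R^\ua$-closure is coarser than $\cl_{\mathcal B}$-closure; (4) deduce the epimorphism statement by transporting along $(-)^\partial$. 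I expect step~(3) to be the main obstacle: it requires pinning down exactly how precomposition with a union-to-intersection map behaves on the lattice of such maps, and in particular the equivalence ``$R^\da$ left-cancellable $\iff$ $R^\ua R^\da$ (on $C^+$) acts as the identity on the image of $R^\da$, equivalently $R^\da R^\ua \le \cl_{\mathcal B}$ on $B^-$''. The cleanest route is probably to argue directly: if $R^\da R^\ua \le \cl_{\mathcal B}$ and $Q_1^\da \mathcal B^\ua R^\da = Q_2^\da\mathcal B^\ua R^\da$, apply $-\circ R^\ua$ to get $Q_i^\da\mathcal B^\ua R^\da R^\ua = Q_i^\da\mathcal B^\ua\cl_{\mathcal B} = Q_i^\da\mathcal B^\ua$ (using right-compatibility $Q_i^\da\mathcal B^\ua\mathcal B^\da\mathcal B^\ua = Q_i^\da\mathcal B^\ua$ and $R^\da R^\ua\le\cl_{\mathcal B}$ together with inflationarity to sandwich), hence $Q_1^\da\mathcal B^\ua = Q_2^\da\mathcal B^\ua$ and so $Q_1^\da = Q_1^\da\mathcal B^\ua\mathcal B^\da = Q_2^\da\mathcal B^\ua\mathcal B^\da = Q_2^\da$; conversely, if $R^\da R^\ua \not\le \cl_{\mathcal B}$, take $\mathcal A$ to be $\mathcal B$ itself (or $\pol(\mathcal B^\da R^\da R^\ua)$) and exhibit two compatible relations into $\mathcal B$ — one being $\mathcal B$, the other $\reln{R^\da R^\ua}$ suitably adjusted — that are equalized by $R$ but are distinct, witnessing failure of monicity.
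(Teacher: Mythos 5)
Your closing computation for the ``if'' half is correct and is in substance the paper's own argument: from $Q_1\fatsemi R=Q_2\fatsemi R$ one gets $Q_1^\da\mathcal B^\ua R^\da=Q_2^\da\mathcal B^\ua R^\da$, and composing with $R^\ua$, using $R^\da R^\ua=\cl_{\mathcal B}$ (your hypothesis plus left-compatibility of $R$) together with right-compatibility of the $Q_i$, recovers $Q_1^\da=Q_2^\da$, hence $Q_1=Q_2$. The genuine gap is in the ``only if'' half, which is where the paper's proof does its real work. Your abstract reduction rests on the claim that every union-to-intersection map $\powerset(B^-)\to\powerset(A^-)$ has the form $Q^\da\mathcal B^\ua$ for some compatible $Q$; this is false, because right-compatibility of $Q$ with $\mathcal B$ gives $Q^\da\mathcal B^\ua\mathcal B^\da=Q^\da$, hence $Q^\da\mathcal B^\ua\cl_{\mathcal B}=Q^\da\mathcal B^\ua$, so only maps invariant under $\cl_{\mathcal B}$ can arise, no matter how $\mathcal A$ is chosen. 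Consequently the asserted equivalence ``$R$ is monic iff precomposition with $R^\da$ is injective on all $\INF$-maps out of $\powerset(B^-)$'' fails: take $\mathcal B$ with full incidence relation (and $B^-$ having at least two elements) and $R=\mathcal B$; this identity is monic and satisfies $R^\da R^\ua=\cl_{\mathcal B}$, yet $R^\da$ is the constant map with value $B^-$, so precomposition with it identifies many distinct union-to-intersection maps. Monicity therefore only yields cancellation against the restricted class of maps, and nothing in your argument produces, from a failure of $R^\da R^\ua\leq\cl_{\mathcal B}$, two distinct compatible relations that $R$ fails to separate.

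Your concrete fallback does not fill this hole: $\reln{R^\da R^\ua}$ is a subset of $B^-\times B^-$, so it is not even of the right type to be a morphism $\mathcal B\looparrowright\mathcal B$, and for the natural repair $S=\reln{R^\da R^\ua\mathcal B^\da}$ neither right-compatibility with $\mathcal B$ nor the required equalization $S\fatsemi R=\mathcal B\fatsemi R$ comes for free (the inclusion $(S\fatsemi R)^\da(Z)\supseteq R^\da(Z)$ is automatic; the reverse inclusion is exactly what would have to be proved). The paper instead picks a Galois closed $Y\subseteq B^-$ with $Y\subsetneq Y'=R^\da R^\ua(Y)$ and builds two morphisms from the one-point polarity $\II=(\{\bullet\},\{\bullet\},\emptyset)$, namely $Q_{\mathcal B^\ua(Y)}$ and $Q_{\mathcal B^\ua(Y')}$ where $\bullet\mathrel{Q_\Upsilon}\beta$ iff $\beta\in\Upsilon$; it verifies these are compatible because the sets $\Upsilon$ are closed, distinct because $Y\neq Y'$ are both closed, and equalized by $R$ because each $R^\da(Z)$ is $R$-closed, so $Y\subseteq R^\da(Z)$ iff $Y'\subseteq R^\da(Z)$. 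Some such explicit construction of a separating pair is the missing content of your proof. Your step (4), obtaining the epimorphism statement from the self-duality of Lemma~\ref{lem:cxt-selfdual}, is fine in substance.
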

\begin{proof}
  Suppose $R^\da R^\ua  \leq {\mathcal B}^\da {\mathcal B}^\ua$.
  Consider compatible relations
  $\pfromto {P,Q}{\mathcal A}{\mathcal B}$ so that $P\fatsemi
  R \subseteq Q\fatsemi R$. Then
  \begin{align*}
    P^\da &= P^\da {\mathcal B}^\ua {\mathcal B}^\da\\
    &= P^\da {\mathcal B}^\ua R^\da R^\ua {\mathcal B}^\da\\
    &\leq Q^\da {\mathcal B}^\ua R^\da R^\ua {\mathcal B}^\da\\
    &\leq Q^\da {\mathcal B}^\ua {\mathcal B}^\da\\
    &= Q^\da
  \end{align*}
So $P\subseteq Q$.

Conversely, suppose $\pfromto R{\mathcal B}{\mathcal C}$ 
    is a compatible relation
    so that 
    \[
    R^\da R^\ua(Y) \neq {\mathcal B}^\da {\mathcal B}^\ua(Y)
    \]
    for some $Y\subseteq B^-$. 
Let $Y'$ be $R^\da R^\ua(Y)$.
Without loss of generality, 
    we can choose $Y$ to be a Galois closed subset of $B^-$. 
So $Y\subseteq Y'$ is a strict inclusion, 
    and by compatibility $Y'$ is also Galois closed.

Let $\II$ denote the polarity 
    $(\{\bullet\}, \{\bullet\}, \emptyset)$.
For $\Upsilon\subseteq B^+$,
    define $Q_{\Upsilon} \subseteq \II^-\times B^+$ by 
    $\bullet \mathrel{P_B} \beta$
    if and only if $\beta\in \Upsilon$.
Then trivially, $Q_\Upsilon$ is compatible with $\II$ on the left.
If $\Upsilon$ is Galois closed, then $\Xi\subseteq \Upsilon$
    implies $\cl^{\mathcal B}(\Xi)\subseteq \Upsilon 
        = Q_\Upsilon^\ua Q_\Upsilon^\da(\Upsilon)$. 
And $\Xi\nsubseteq \Upsilon$ implies 
    $Q_\Upsilon^\ua Q_\Upsilon^\da(\Xi) = B^+$. 
So $Q_\Upsilon$ is also compatible with $\mathcal B$ on the right.

Now consider $Q_{{\mathcal B}^\ua(Y)}$ 
    and $Q_{{\mathcal B}^\ua(Y')}$. 
These are unequal compatible relations from $\II$ to $\mathcal B$.
But using compatibility of $R$ and the fact that 
    $Q_\Upsilon^\ua(\{\bullet\}) = \Upsilon$, 
    the following inclusions are equivalent 
    for any $Z\subseteq C^+$.
\begin{align*}
    \{\bullet\}&\subseteq Q_{\mathcal B^\ua(Y)}^\da 
            {\mathcal B}^\ua R^\da(Z)\\
    {\mathcal B}^\ua R^\da(Z) 
        &\subseteq Q_{{\mathcal B}^\ua(Y)}^\ua(\{\bullet\})\\
    {\mathcal B}^\ua R^\da(Z) &\subseteq {\mathcal B}^\ua(Y)\\
    Y &\subseteq R^\da(Z)\\
    Y'&\subseteq R^\da(Z).
\end{align*}
    So $Q_{{\mathcal B}^\ua(Y)}\fatsemi R 
        = Q_{{\mathcal B}^\ua(Y')}\fatsemi R$.
\end{proof}

This leads to the following epi-mono factorization system 
    for $\Pol$.
Suppose $\pfromto R{\mathcal A}{\mathcal B}$ 
    is a compatible relation. 
Then $\pol(R) = (A^-,B^+, R)$ constitutes another polarity.  
The same relation $R\subseteq A^-\times \pol(R)^+$ is 
    compatible as a relation from $\mathcal A$ to $\pol(R)$, 
    and also compatible as a relation
    from $\pol(R)$ to $\mathcal B$. 
Interpreted in the first way, 
    $R^\ua R^\da = \pol(R)^\ua\pol(R)^\da$ 
    (because $R$ and $\pol(R)$ are the same relation), 
    and in the second $R^\da R^\ua = \pol(R)^\da \pol(R)^\ua$.
So $\pfromto R{\mathcal A}{\pol(R)}$ is an epimorphism, 
    and $\pfromto R{\pol(R)}{\mathcal B}$ is a monomorphism.  
And since $R^\da = R^\da \pol(R)^\ua R^\da$, 
    $R$ factors into $R\fatsemi_{\pol{R}} R$. 
Diagramatically, 
    this is depicted as in Figure \ref{fig:factorizedR}.

\begin{figure}[ht]\label{fig:factorizedR}
	\begin{tikzpicture}
 \node(Xm) at (0,-1) {$A^-$};
 \node(Xp) at (0,1) {$A^+$};
 \node(Yp) at (6,1) {$B^+$};
 \node(Ym) at (6,-1) {$B^-$};

 \draw[-] (Xm.north) to node[right, pos=.4] {${}_\leq$} node[left] {$\mathcal A$} (Xp.south);
 \draw[-] (Xm.north) to node[below] {$R$} (Yp.south);
 \draw[-] (Ym.north) to node[left, pos=.6] {${}^\geq$} node[right] {$\mathcal B$} (Yp.south);
\end{tikzpicture}

\begin{tikzpicture}
 \node(Xm) at (0,-1) {$A^-$};
 \node(Xp) at (0,1) {$A^+$};
 \node(Yp) at (3,1) {$B^+$};
 \node(Ym) at (3,-1) {$A^-$};
 \node(Zp) at (6,1) {$B^+$};
 \node(Zm) at (6,-1) {$B^-$};

 \draw[-] (Xm.north) to node[right, pos=.4] {${}_\leq$} node[left] {$\mathcal A$} (Xp.south);
 \draw[-] (Xm.north) to node[below] {$R$} (Yp.south);
 \draw[-] (Ym.north) to node[left, pos=.7] {${}^=$} node[right,pos=.3] {${}_=$} node[above right] {$\pol(R)$} (Yp.south);
 \draw[-] (Ym.north) to node[below] {$R$} (Zp.south);
 \draw[-] (Zm.north) to node[left, pos=.6] {${}^\geq$} node[right] {$\mathcal B$} (Zp.south);
\end{tikzpicture}
\caption{Epi-mono factoring of a compatible relation}
\end{figure}
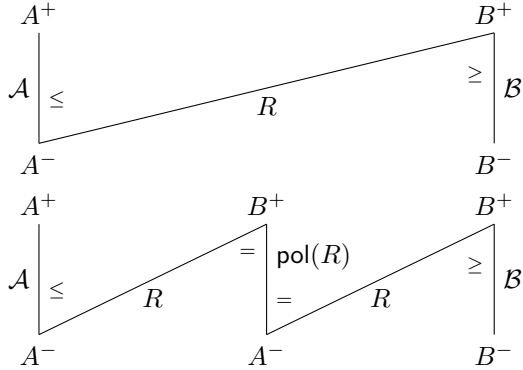

Essentially the same technique shows that a morphism that is both 
    a monomorphism and an epimorphism is necessarily 
    an isomorphism. 

\begin{lemma}\label{lem:Cxt is balanced}
The category $\Pol$ is balanced.
\end{lemma}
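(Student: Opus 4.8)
The plan is to re-use the canonical epi--mono factorization through $\pol(R)$ from the discussion just above, now in the case where it collapses. Let $\pfromto R{\mathcal B}{\mathcal C}$ be a compatible relation that is both a monomorphism and an epimorphism. The first step is to turn the inequalities of Lemma~\ref{lem:cxt-monos} into equalities: monicity gives $R^\da R^\ua \le \cl_{\mathcal B}$, and compatibility of $R$ with $\mathcal B$ on the left gives (Lemma~\ref{lem:compatibility}(1)) the reverse inequality, so $R^\da R^\ua = \cl_{\mathcal B}$; dually, epicity together with compatibility with $\mathcal C$ on the right yields $R^\ua R^\da = \cl^{\mathcal C}$.

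Now set $\mathcal D = \pol(R) = (B^-,C^+;R)$, so that $\mathcal D^\da = R^\da$ and $\mathcal D^\ua = R^\ua$. By the previous step, $\cl_{\mathcal D} = R^\da R^\ua = \cl_{\mathcal B}$ as closure operators on $\powerset(B^-)$, and $\cl^{\mathcal D} = R^\ua R^\da = \cl^{\mathcal C}$ as closure operators on $\powerset(C^+)$. As was already observed when the factorization system was described, $R$ is a compatible morphism $\mathcal B\to\mathcal D$, $R$ is a compatible morphism $\mathcal D\to\mathcal C$, and $\pfromto R{\mathcal B}{\mathcal C}$ equals the composite $R\fatsemi_{\mathcal D}R$ of these two. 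So it suffices to prove each factor is an isomorphism.

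For $\pfromto R{\mathcal B}{\mathcal D}$ I propose $\rel(\mathcal B)$, read as a relation in $B^-\times B^+ = \mathcal D^-\times B^+$, as the inverse $\mathcal D\to\mathcal B$. It is compatible: on the left it asks $\cl_{\mathcal D}\mathcal B^\da=\mathcal B^\da$, true because $\cl_{\mathcal D}=\cl_{\mathcal B}$ and $\cl_{\mathcal B}\mathcal B^\da=\mathcal B^\da\mathcal B^\ua\mathcal B^\da=\mathcal B^\da$, and on the right it is automatic. Then, using $\mathcal D^\ua=R^\ua$, the equality $R^\da R^\ua=\cl_{\mathcal B}$, and left-compatibility of $R$ with $\mathcal B$,
\[
(R\fatsemi_{\mathcal D}\rel(\mathcal B))^\da = R^\da R^\ua\mathcal B^\da = \mathcal B^\da = \id_{\mathcal B}^\da,
\qquad
(\rel(\mathcal B)\fatsemi_{\mathcal B}R)^\da = \mathcal B^\da\mathcal B^\ua R^\da = R^\da = \id_{\mathcal D}^\da,
\]
and since morphisms with the same $(-)^\da$ coincide (by $\reln{R^\da}=R$, Lemma~\ref{lem:bracket-da}), these two composites are $\id_{\mathcal B}$ and $\id_{\mathcal D}$. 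The dual computation --- taking $\rel(\mathcal C)$ in $C^-\times C^+ = C^-\times\mathcal D^+$ as inverse and using $R^\ua R^\da=\cl^{\mathcal C}$ --- shows $\pfromto R{\mathcal D}{\mathcal C}$ is an isomorphism. Hence $\pfromto R{\mathcal B}{\mathcal C}$, being a composite of isomorphisms, is an isomorphism.

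I do not expect a serious obstacle: the calculations are one line each once the two equalities $R^\da R^\ua=\cl_{\mathcal B}$ and $R^\ua R^\da=\cl^{\mathcal C}$ are in hand. The only point that must be got exactly right is that first step --- that the mono and epi conditions of Lemma~\ref{lem:cxt-monos} combine with the two halves of compatibility to produce genuine equalities --- since it is precisely they that let $\pol(R)$ double as a copy of $\mathcal B$ (through the left closure) and as a copy of $\mathcal C$ (through the right closure), which is what makes the two factors invertible.
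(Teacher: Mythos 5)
Your proposal is correct and follows essentially the paper's own argument: both extract the equalities $R^\da R^\ua=\cl_{\mathcal B}$ and $R^\ua R^\da=\cl^{\mathcal C}$ from Lemma~\ref{lem:cxt-monos} together with compatibility, factor $R$ through $\pol(R)$, and use the incidence relations $\rel(\mathcal B)$ and $\rel(\mathcal C)$ as the inverse data (your two factor-inverses compose to exactly the paper's candidate inverse $R^-=\rel(\mathcal C)\fatsemi_{\pol(R)}\rel(\mathcal B)$). The only difference is organizational --- you verify each factor is an isomorphism rather than checking the composite inverse directly --- and your explicit derivation of the equalities from the mono/epi inequalities plus Lemma~\ref{lem:compatibility}(1) is a point the paper leaves implicit.
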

\begin{proof}
    Suppose compatible relation 
        $\pfromto R{\mathcal A}{\mathcal B}$ 
        is both a monomorphism and an epimorphism.
By Lemma~\ref{lem:cxt-monos}, 
    then $R^\da R^\ua = {\mathcal A}^\da {\mathcal A}^\ua$ 
    and $R^\ua R^\da = {\mathcal B}^\ua {\mathcal B}^\da$.

These equalities mean that the relation $\rel(\mathcal A)$ is 
    compatible with the polarity $\pol(R)$ on the right, 
    and $\rel(\mathcal B)$ is compatible with $\pol(R)$ 
    on the left. 
Of course, $R$ is compatible with $\pol(R)$. 
So $\pfromto {\rel(\mathcal B)}{\mathcal B}{\pol(R)}$ and 
    $\pfromto {\rel(\mathcal A)}{\pol(R)}{\mathcal A}$ are 
    morphisms that compose through $\pol(R)$ yielding 
    a candidate inverse 
    $R^-=\rel(\mathcal B)\fatsemi_{\pol(R)} \rel(\mathcal A)$. 

The composition $R\fatsemi_{\mathcal B} R^-$ is 
$\reln{R^\da {\mathcal B}^\ua {\mathcal B}^\da R^\ua {\mathcal A}^\da}$. 
Because $R$ is an epimorphism, this equals $\reln{R^\da R^\ua {\mathcal A}^\da}$, 
	and because it is a monomorphism, this equals $\reln{{\mathcal A}^\da}$. 
The composition $R^-\fatsemi_{\mathcal A} R$ equals $\mathcal B$ by a similar calculation.
\end{proof}

It is worth emphasizing that isomorphisms in the category $\Pol$ are
plentiful. 
For example, for a set $A$, define polarities $P(A) = (A,A, \neq)$, and $P'(A) = (A,\powerset(A),\in)$,
and $P''(A) = (\powerset(A),\powerset(A),\between)$, where $\between$ denotes the non-empty
intersection relation between subsets of $A$.  
One can show that
$\mathord{\in}$ restricted to $A$ and $\powerset(A)$ is an isomorphism
from $P(A)$ to $P'(A)$, and also from $P'(A)$ to $P''(A)$.
To illustrate the point farther, consider the following analogue of
$T_0$ separation.

Say that polarity $\mathcal A$ is \emph{$-$-separating} 
	if for each $a,a''\in A^-$, $\mathcal A[a]=\mathcal A[a']$ implies $a=a'$.  
Likewise, say it is $+$-separating if for each $\alpha,\alpha'\in A^+$, 
	 ${\mathcal A}^{\intercal}[\alpha] = {\mathcal A}^{\intercal}[\alpha']$ implies $\alpha=\alpha'$. 
A polarity that is both $-$-separating and $+$-separating is simply called \emph{separating}.

\begin{lemma}\label{lem:separating}
  For any polarity $\mathcal A$, there is a separating polarity isomorphic to $\mathcal A$.
\end{lemma}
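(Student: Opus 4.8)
The plan is to collapse $\mathcal A$ along the equivalences that identify elements with equal rows, resp.\ equal columns, and to exhibit the tautological relation from $\mathcal A$ onto this quotient as an isomorphism in $\Pol$ by showing it is both monic and epic. The ingredients are Lemma~\ref{lem:compatibility} (compatibility tests), Lemma~\ref{lem:cxt-monos} (mono/epi tests), and Lemma~\ref{lem:Cxt is balanced} (balancedness).

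Concretely, on $A^-$ set $a\sim a'$ iff $\mathcal A[a]=\mathcal A[a']$ and on $A^+$ set $\alpha\sim\alpha'$ iff $\mathcal A^\intercal[\alpha]=\mathcal A^\intercal[\alpha']$, with surjections $q^-\colon A^-\to\bar A^-$ and $q^+\colon A^+\to\bar A^+$ onto the quotient sets. Put $[a]\mathrel{\bar{\mathcal A}}[\alpha]$ iff $a\mathrel{\mathcal A}\alpha$; this is well defined straight from the definitions of $\sim$, each $\mathcal A[a]$ is then a union of $\sim$-classes of $A^+$ (and dually each column a union of $\sim$-classes of $A^-$), and using this $\bar{\mathcal A}=(\bar A^-,\bar A^+;\bar{\mathcal A})$ is separating. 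Let $R\subseteq A^-\times\bar A^+$ be $a\mathrel R[\alpha]$ iff $a\mathrel{\mathcal A}\alpha$.

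The work is to carry $R^\da,R^\ua,\bar{\mathcal A}^\da,\bar{\mathcal A}^\ua$ across $q^\pm$. Unwinding the definitions gives, for $X\subseteq A^-$, $\bar X\subseteq\bar A^-$, $Z,\bar Z\subseteq\bar A^+$: $R^\da(Z)=\mathcal A^\da((q^+)^{-1}[Z])$, $(q^+)^{-1}[R^\ua(X)]=\mathcal A^\ua(X)$, $(q^-)^{-1}[\bar{\mathcal A}^\da(\bar Z)]=\mathcal A^\da((q^+)^{-1}[\bar Z])$, and $(q^+)^{-1}[\bar{\mathcal A}^\ua(\bar X)]=\mathcal A^\ua((q^-)^{-1}[\bar X])$. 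The first two compose to $R^\da R^\ua=\mathcal A^\da\mathcal A^\ua=\cl_{\mathcal A}$; all four compose to $(q^+)^{-1}[R^\ua R^\da(Z)]=\mathcal A^\ua\mathcal A^\da((q^+)^{-1}[Z])=(q^+)^{-1}[\cl^{\bar{\mathcal A}}(Z)]$, so $R^\ua R^\da=\cl^{\bar{\mathcal A}}$ because $q^+$ is onto and hence $(q^+)^{-1}$ is injective on powersets.

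Given $R^\da R^\ua=\cl_{\mathcal A}$ and $R^\ua R^\da=\cl^{\bar{\mathcal A}}$, the rest is formal: $\cl_{\mathcal A}\le R^\da R^\ua$ and $\cl^{\bar{\mathcal A}}\le R^\ua R^\da$ make $R$ compatible with $\mathcal A$ on the left and $\bar{\mathcal A}$ on the right (Lemma~\ref{lem:compatibility}), so $\pfromto R{\mathcal A}{\bar{\mathcal A}}$ is a morphism; the reverse inequalities make $R$ monic and epic (Lemma~\ref{lem:cxt-monos}), so $R$ is an isomorphism (Lemma~\ref{lem:Cxt is balanced}). I do not expect a genuine obstacle: the construction is essentially forced, and the only thing that takes care is the routine tracking of subsets through $q^-$ and $q^+$. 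One can halve even that labour by observing that the construction commutes with the self-duality of Lemma~\ref{lem:cxt-selfdual}, so the right-hand computations follow from the left-hand ones applied to $\mathcal A^\partial$; and it is preferable to argue through the mono/epi characterisation rather than to write down an explicit two-sided inverse, since the latter forces $\bar{\mathcal A}^\ua$ into a composition formula and makes the bookkeeping heavier.
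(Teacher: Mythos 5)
Your proof is correct, and it reaches the conclusion by a somewhat different route than the paper. The underlying construction is the same quotient idea (identify elements with equal rows, resp.\ columns), and both arguments ultimately rest on the same computational fact, namely that the Galois machinery is unchanged across the quotient maps. But the paper proceeds in two stages --- it first quotients only $A^-$ to get a $-$-separating polarity $\hat{\mathcal A}$, then dualizes to quotient the $+$ side, observing that the second step preserves $-$-separation --- and it verifies the isomorphism by exhibiting an explicit two-sided inverse: since $A^+$ is untouched, the incidence relations $\mathcal A$ and $\hat{\mathcal A}$ themselves are type-correct as morphisms $\mathcal A\to\hat{\mathcal A}$ and $\hat{\mathcal A}\to\mathcal A$, and the single identity $\mathcal A^\ua\mathcal A^\da=\hat{\mathcal A}^\ua\hat{\mathcal A}^\da$ shows the two compositions are the respective identities. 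You instead collapse both sides at once and argue abstractly: $R^\da R^\ua=\cl_{\mathcal A}$ and $R^\ua R^\da=\cl^{\bar{\mathcal A}}$ give compatibility via Lemma~\ref{lem:compatibility} and mono plus epi via Lemma~\ref{lem:cxt-monos}, whence isomorphism by balancedness (Lemma~\ref{lem:Cxt is balanced}, which does precede this statement, so there is no circularity). What the paper's route buys is self-containedness (an explicit inverse, no appeal to balancedness) and lighter bookkeeping, since quotienting one side at a time keeps $A^+$ fixed and makes the comparison relations tautological; what your route buys is a single construction handling both separations simultaneously and a cleaner conceptual reduction to the already-established categorical facts, at the cost of the $q^\pm$-preimage computations (which you carry out correctly, the key points being that rows and columns are unions of equivalence classes and that $(q^+)^{-1}$ is injective on subsets of the quotient).
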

\begin{proof}
  Define $\equiv$ as the equivalence relation on $A^-$ given by $a\equiv a'$ if and only if $\mathcal A[a]=\mathcal A[a']$.
  Then define $\hat{\mathcal A} = (A^-/\equiv, A^+; \hat{\mathcal A})$ where $[a]\hat{\mathcal A} \alpha$ if and only if $a \mathrel{\mathcal A}\alpha$.  
  Now it is easy to check that ${\mathcal A}^\ua {\mathcal A}^\da = \hat{{\mathcal A}}^\ua \hat{{\mathcal A}}^\da$.
  So $\hat{{\mathcal A}}$ serves also as a morphism from $\hat{\mathcal A}$ to $\mathcal A$, and ${\mathcal A}$ serves as a morphism from $\mathcal A$ to $\hat{\mathcal A}$.
  The above identities translate to ${\mathcal A} = {\mathcal A}\fatsemi_{\hat{\mathcal A}}\hat{{\mathcal A}}$ and $\hat{{\mathcal A}} = \hat{{\mathcal A}}\fatsemi_{\mathcal A} {\mathcal A}$.
  By construction $\hat{\mathcal A}$ is $-$-separating.
  The dual construction involving $A^+$ instead of $A^-$ yields a $+$-separating object $\check{\mathcal A}$ that isomorphic to $A$.
  Clearly, if $\mathcal A$ is $-$-separating, then $\check{\mathcal A}$ is as well.
\end{proof}

\begin{remark}
    The construction of a separating polarity from $\mathcal A$ is actually a functor, and the full category of separating polarities is equivalent to $\Pol$. We omit the details because we do not need them here.
\end{remark}
The result tells us that every object is isomorphic to its
separating co-reflection.  
Contrast this with topological spaces,
where $T_0$ co-reflection is not generally an isomorphism.

To close this section, we define another equivalent full subcategory of $\Pol$ that will play a part in the analysis of the symmetric monoidal structure of $\Pol$.

Say that a polarity $\mathcal A$ is \emph{standard} if 
$A^+ = \powerset(A^-)$, and the incidence relation is reflexive and transitive in the sense that
\begin{itemize}
    \item if $a \in X$ then $a \mathrel{\mathcal A} X$; and 
    \item if $a\mathrel{\mathcal A} X$ and for each $a'\in X$, $a' \mathrel{\mathcal A} X'$, then $a \mathrel{\mathcal A} X'$.
\end{itemize}

\begin{theorem}\label{lem:standardization}
    $\Pol$ is equivalent to the full subcategory consisting of standard polarities.
\end{theorem}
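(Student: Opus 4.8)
The plan is to exhibit, for an arbitrary polarity $\mathcal A=(A^-,A^+;\mathcal A)$, a standard polarity isomorphic to $\mathcal A$ in $\Pol$. Since the standard polarities form a \emph{full} subcategory, the inclusion functor is automatically full and faithful, so this essential surjectivity is all that is needed to conclude that the inclusion is an equivalence. The standard polarity I would use is obtained by currying the closure operator: set $\mathcal A^{\mathsf s}\defeq(A^-,\powerset(A^-);\mathcal A^{\mathsf s})$ with incidence $a\mathrel{\mathcal A^{\mathsf s}}X$ if and only if $a\in\cl_{\mathcal A}(X)$. That $\mathcal A^{\mathsf s}$ is standard is read straight off the closure axioms: the reflexivity clause ($a\in X$ implies $a\mathrel{\mathcal A^{\mathsf s}}X$) is that $\cl_{\mathcal A}$ is inflationary, and the transitivity clause is monotonicity together with idempotence, since $a\in\cl_{\mathcal A}(X)$ and $X\subseteq\cl_{\mathcal A}(X')$ give $a\in\cl_{\mathcal A}(X)\subseteq\cl_{\mathcal A}(\cl_{\mathcal A}(X'))=\cl_{\mathcal A}(X')$.

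The one step that is more than bookkeeping is the identity $\cl_{\mathcal A^{\mathsf s}}=\cl_{\mathcal A}$, which is meaningful because $\mathcal A$ and $\mathcal A^{\mathsf s}$ share the lower set $A^-$, so both operators act on $\powerset(A^-)$. Unwinding the definitions, $(\mathcal A^{\mathsf s})^\ua(Z)=\{X\st Z\subseteq\cl_{\mathcal A}(X)\}$ and $(\mathcal A^{\mathsf s})^\da(\mathcal X)=\bigcap_{X\in\mathcal X}\cl_{\mathcal A}(X)$, so $\cl_{\mathcal A^{\mathsf s}}(Z)=\bigcap\{\cl_{\mathcal A}(X)\st Z\subseteq\cl_{\mathcal A}(X)\}$. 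Taking $X=Z$ shows this intersection is contained in $\cl_{\mathcal A}(Z)$, and monotonicity and idempotence of $\cl_{\mathcal A}$ show $\cl_{\mathcal A}(Z)$ is contained in every term of the intersection; hence the two agree.

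Finally, I would check that the raw relation $\mathcal A^{\mathsf s}\subseteq A^-\times\powerset(A^-)$, read as a relation $R$ from $\mathcal A$ to $\mathcal A^{\mathsf s}$, is the required isomorphism. Because $R$ is \emph{literally} the incidence relation of $\mathcal A^{\mathsf s}$, we have $R^\da R^\ua=\cl_{\mathcal A^{\mathsf s}}$ and $R^\ua R^\da=\cl^{\mathcal A^{\mathsf s}}$; combining the first with $\cl_{\mathcal A^{\mathsf s}}=\cl_{\mathcal A}$ and invoking Lemma~\ref{lem:compatibility} gives compatibility of $R$ with $\mathcal A$ on the left, while the second gives compatibility with $\mathcal A^{\mathsf s}$ on the right, so $R$ is a morphism $\mathcal A\to\mathcal A^{\mathsf s}$. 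The very same two equalities give $R^\da R^\ua\le\cl_{\mathcal A}$ and $R^\ua R^\da\le\cl^{\mathcal A^{\mathsf s}}$, so by Lemma~\ref{lem:cxt-monos} $R$ is simultaneously a monomorphism and an epimorphism, whence an isomorphism because $\Pol$ is balanced (Lemma~\ref{lem:Cxt is balanced}). If an explicit inverse is preferred, the incidence relation $\mathcal A$ itself, read as a relation $\mathcal A^{\mathsf s}\to\mathcal A$, works: the two round trips are $\reln{(\mathcal A^{\mathsf s})^\da(\mathcal A^{\mathsf s})^\ua\mathcal A^\da}$ and $\reln{\mathcal A^\da\mathcal A^\ua(\mathcal A^{\mathsf s})^\da}$, which collapse to $\reln{\mathcal A^\da}$ and $\reln{(\mathcal A^{\mathsf s})^\da}$, i.e., the two identity morphisms, using $\cl_{\mathcal A^{\mathsf s}}=\cl_{\mathcal A}$ once more. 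The only real obstacle here is spotting the right object $\mathcal A^{\mathsf s}$; once the closure operator is curried, the whole argument reduces to that single closure computation.
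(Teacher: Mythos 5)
Your proposal is correct, and its object-level content coincides with the paper's: your $\mathcal A^{\mathsf s}$ is exactly the paper's standardization $\tilde{\mathcal A}$, the key computation $\cl_{\mathcal A^{\mathsf s}}=\cl_{\mathcal A}$ is the same, and your mutually inverse relations are the paper's $\rel(\tilde{\mathcal A})$ and $\rel(\mathcal A)$. Where you genuinely diverge is in how you promote this to an equivalence of categories. You invoke the abstract criterion: the inclusion of a full subcategory is full and faithful, so exhibiting an isomorphism $\mathcal A\cong\mathcal A^{\mathsf s}$ for every $\mathcal A$ (which you get either via Lemma~\ref{lem:cxt-monos} plus balancedness, Lemma~\ref{lem:Cxt is balanced}, or via your explicit inverse) gives essential surjectivity and hence an equivalence. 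The paper instead builds the quasi-inverse explicitly: it defines the action on morphisms by $a\mathrel{\tilde R}Y\iff a\in R^\da\mathcal B^\ua(Y)$, verifies compatibility of $\tilde R$, checks that $\widetilde{R\fatsemi S}=\tilde R\fatsemi\tilde S$, and shows $\rel(\mathcal A)$ is a \emph{natural} isomorphism. Your route is shorter and avoids all of that bookkeeping (at the cost of appealing to the fully-faithful-plus-essentially-surjective theorem, which in general uses choice to produce a quasi-inverse, though here the canonical choice $\mathcal A\mapsto\mathcal A^{\mathsf s}$ is available, so nothing is really lost); the paper's extra work buys an explicit standardization functor whose construction is reused later, notably as the template for the tensor product in the $*$-autonomous structure. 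Two cosmetic points: your appeal to balancedness is a nice shortcut but is strictly unnecessary once you have the explicit inverse, and for that inverse you should note (as the paper does, tersely) that $\rel(\mathcal A)$, read from $\mathcal A^{\mathsf s}$ to $\mathcal A$, is itself compatible — on the left because $\cl_{\mathcal A^{\mathsf s}}\mathcal A^\da=\cl_{\mathcal A}\mathcal A^\da=\mathcal A^\da$, and on the right because $\mathcal A^\da\cl^{\mathcal A}=\mathcal A^\da$ — before computing the two round trips.
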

\begin{proof}
  For polarity $\mathcal A$, 
  let $\tilde{\mathcal A}$ be the polarity $(A^-,\powerset(A^-), \tilde{\mathcal A})$ where $a\mathrel{\tilde{\mathcal A}} X$ if and only if $a\in \cl_{\mathcal A}(X)$.
  Because $\cl_{\mathcal A}$ is inflationary and idempotent,  $\tilde{\mathcal A}$ is a standard polarity.
  By design, for $X\subseteq A^-$, 
  \[
  \cl_{\mathcal A}(X) = \cl_{\tilde{\mathcal A}}(X) = \tilde{\mathcal A}^\intercal[X].
  \]
  So,  $\mathbf{X}\subseteq \powerset(A^-)$, 
  \[\tilde{\mathcal A}^\da(\mathbf X) = \bigcap_{X\in \mathbf X}\cl_{\mathcal A}(X) = {\mathcal B}^\da(\bigcup_{X\in\mathbf X}{\mathcal B}^\ua(X)).
  \]
  Moreover, using the Galois connection of ${\mathcal B}^\da$ and ${\mathcal B}^\ua$, $X'\in \tilde{\mathcal A}^\ua(X)$ if and only if $X\subseteq \cl_{\mathcal A}(X')$ if and only if ${\mathcal B}^\ua(X')\subseteq {\mathcal B}^\ua(X).$

  For a compatible relation $\pfromto R{\mathcal A}{\mathcal B}$,
  define $\tilde{R}\subseteq A^-\times \powerset{B^-}$ 
  by $a\mathrel{\tilde{R}} Y$ if and only if $a\in R^\da\mathcal B^\ua(Y)$. 
  So, $Y\in \tilde{R}^\ua(X)$ if and only if $X\subseteq R^\da{\mathcal B}^\ua(Y)$ if and only if ${\mathcal B}^\da R^\ua(X)\subseteq Y$.
  Hence, $\tilde{R}^\ua(X) \subseteq \tilde{R}^\ua(X')$ if and only if ${\mathcal B}^\da R^\ua(X')\subseteq {\mathcal B}^\da R^\ua(X)$ if and only if $R^\ua(X) \subseteq {\mathcal B}^\ua{\mathcal B}^\da R^\ua(X') = R^\ua(X')$.
  So, $\tilde{R}^\da\tilde{R}^\ua = R^\da R^\ua$.
  From this, it follows that $\tilde{R}$ is compatible with $\tilde{A}$ on the left.

  Furthermore, $a\in \tilde{R}^\da {\tilde{\mathcal B}}^\ua(Y)$
  if and only if ${\tilde{\mathcal B}}^\ua(Y) \subseteq \tilde{R}^\ua(\{a\})$ if and only if for every $Y'$, if $Y\subseteq \cl_{\mathcal B}(Y')$, then $a\in R^\da{\mathcal B}^\ua(Y')$. This is equivalent to requiring that for all $Y'$, if ${\mathcal B}^\ua(Y')\subseteq {\mathcal B}^\ua(Y)$, then $a\in R^\da{\mathcal B}^\ua(Y')$. But $R^\da$ is antitone, 
  so this is equivalent to $a\in R^\da{\mathcal B}^\ua(Y)$. 
  That is, $\tilde{R}^\da \tilde{\mathcal B}^\ua = R^\da {\mathcal B}^\ua$. 
  Consequently, $\tilde{R}^\da \cl^{\mathcal B} = R^\da {\mathcal B}^\ua {\tilde{\mathcal B}}^\da$. And for $\mathbf Y\subseteq \powerset(B^-)$, 
  \[
  R^\da {\mathcal B}^\ua {\tilde{\mathcal B}}^\da(\mathbf Y) = R^\da {\mathcal B}^\ua {\mathcal B}^\da(\bigcup_{Y\in\mathbf Y}{\mathcal B}^\ua(Y)) = \bigcap_{Y\in\mathbf Y} R^\da {\mathcal B}^\ua(X) = \tilde{R}^\da(\mathbf Y).
  \]
  So, $\tilde{R}$ is also compatible with $\tilde{\mathcal B}$ on the right.

  For compositions, consider $\pfromto S{\mathcal B}{\mathcal C}$. Then $a \mathrel{\widetilde{R\fatsemi S}}Z$ if and only if $a\in R^\da{\mathcal B}^\ua S^\da {\mathcal C}^\ua(Z)$.
  And $a \mathrel{\tilde{R}\fatsemi \tilde{S}} Z$ if and only if $a\in \tilde{R}^\da {\tilde{\mathcal B}}^\ua \tilde{S}^\da(\{Z\})$.
  But $\tilde{R}^\da {\tilde{\mathcal B}}^\ua = R^\da {\mathcal B}^\ua$ and $\tilde{S}^\da(\{Z\}) = S^\da {\mathcal C}^\ua(Z)$.
  So $\tilde{-}$ is a functor into the category of standard polarities.

 For $\mathcal A$, the relation $\rel(\mathcal A)$ is evidently compatible with the polarity $\mathcal A$  on the right, and as already noted $\cl_{\mathcal A} = \cl_{\tilde{\mathcal A}}$.
 So $\rel{\mathcal A}$ is a natural isomorphism from $\tilde{\mathcal A}$ to $\mathcal A$. The inverse is $\rel(\tilde{\mathcal A})$ taken as a compatible relation from $\mathcal A$ to $\tilde{\mathcal A}$.
\end{proof}

Recalling the correspondence between closure operators and closure systems, note that standard polarities are yet another manifestation of the same idea. 
A closure operator $c$ on $A$ determines a standard polarity by the relation $a \mathrel{\mathsf A_c} X$ if and only if $a\in c(X)$.
Clearly, any (standard) polarity $\mathcal A$ determines a closure operator $\cl_{\mathcal A}$ on $A^-$.
The move between standard polarities and closure operators is also a bijection.

\section{$\INF$ and polarities}\label{sec:INF}

Here we show that the categories $\Pol$ and $\INF$ are dually equivalent.
For a compatible relation $\pfromto R{\mathcal A}{\mathcal B}$,
    define $\Cl^-(R) \defeq R^\da {\mathcal B}^\ua$. 
Evidently, $\Cl^-(R)$ is monotone as it is a composition of two antitone maps,
 and $\rel(\mathcal A)^\da {\mathcal A}^\ua$ is the identity on the complete lattice $\Cl^-(\mathcal A)$.  
 For compatible relations $\pfromto R{\mathcal A}{\mathcal B}$ 
    and $\pfromto S{\mathcal B}{\mathcal C}$, composition is preserved: 
$\Cl^-(R\fatsemi S) = R^\da {\mathcal B}^\ua S^\da {\mathcal C}^\ua$.  
So $\Cl^-$ is indeed functorial
into the category of monotone maps between $\INF$ objects. It remains
to check that $\Cl^-(R)$ preserves infima.

\begin{lemma}\label{lem:cxt-to-inf}
  The functor $\Cl^-$ sends a compatible relation $R$ to an intersection
  preserving map between Galois closed sets. 
  Hence, it co-restricts to a functor $\Pol^{\textsf{op}}\Rightarrow \INF$.
\end{lemma}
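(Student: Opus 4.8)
The plan is to verify two things about the monotone map $\Cl^-(R)=R^\da\mathcal{B}^\ua$ attached to a compatible relation $\pfromto R{\mathcal A}{\mathcal B}$: (i) that it carries Galois closed subsets of $B^-$ to Galois closed subsets of $A^-$, and (ii) that, viewed as a map $\Cl^-(\mathcal B)\to\Cl^-(\mathcal A)$, it preserves arbitrary intersections. Since infima in the lattice of fixpoints of a closure operator are exactly intersections, (i) and (ii) together say that $\Cl^-(R)$ is an $\INF$-morphism between the appropriate closure lattices, and the functoriality of $\Cl^-$ into monotone maps already recorded above then yields the co-restriction to a functor $\Pol^{\textsf{op}}\to\INF$.

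For (i), I would use left compatibility: $\cl_{\mathcal A}R^\da=R^\da$, i.e. $\mathcal{A}^\da\mathcal{A}^\ua R^\da=R^\da$; post-composing with $\mathcal{B}^\ua$ shows that $\Cl^-(R)(Y)=R^\da\mathcal{B}^\ua(Y)$ is a fixpoint of $\cl_{\mathcal A}$ for every $Y\subseteq B^-$, so $\Cl^-(R)$ already lands in $\Cl^-(\mathcal A)$, and being a composite of two antitone maps it is monotone. For (ii), take Galois closed $\{Y_i\}_i\subseteq\powerset(B^-)$, whose meet in $\Cl^-(\mathcal B)$ is $\bigcap_i Y_i$. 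Since each $Y_i$ is a fixpoint of $\cl_{\mathcal B}=\mathcal{B}^\da\mathcal{B}^\ua$ and $\mathcal{B}^\da$ converts unions to intersections, rewrite $\bigcap_i Y_i=\bigcap_i\mathcal{B}^\da\mathcal{B}^\ua(Y_i)=\mathcal{B}^\da\bigl(\bigcup_i\mathcal{B}^\ua(Y_i)\bigr)$. Applying $R^\da\mathcal{B}^\ua$, invoking right compatibility in the form $R^\da\mathcal{B}^\ua\mathcal{B}^\da=R^\da\cl^{\mathcal B}=R^\da$, and using that $R^\da$ converts unions to intersections,
\[
\Cl^-(R)\Bigl(\bigcap_i Y_i\Bigr)=R^\da\mathcal{B}^\ua\mathcal{B}^\da\Bigl(\bigcup_i\mathcal{B}^\ua(Y_i)\Bigr)=R^\da\Bigl(\bigcup_i\mathcal{B}^\ua(Y_i)\Bigr)=\bigcap_i R^\da\mathcal{B}^\ua(Y_i)=\bigcap_i\Cl^-(R)(Y_i),
\]
which also covers the empty family, giving $\Cl^-(R)(B^-)=R^\da(\emptyset)=A^-$, the top of $\Cl^-(\mathcal A)$.

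I do not expect a genuine obstacle here; the one thing to keep straight is that \emph{both} compatibility conditions are needed and for different reasons — left compatibility forces the output to be $\cl_{\mathcal A}$-closed, whereas right compatibility is precisely what rewrites $\bigcap_i Y_i$ as a $\mathcal{B}^\da$-image of a union so the Galois connection can turn it back into an intersection downstream — together with the standing facts that infima in a closure lattice are intersections and that $R^\da$, $\mathcal{B}^\da$ send unions to intersections.
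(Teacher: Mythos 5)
Your proposal is correct and follows essentially the same route as the paper: the identical chain $R^\da\mathcal{B}^\ua(\bigcap_i Y_i)=R^\da\mathcal{B}^\ua\mathcal{B}^\da(\bigcup_i\mathcal{B}^\ua(Y_i))=R^\da(\bigcup_i\mathcal{B}^\ua(Y_i))=\bigcap_i R^\da\mathcal{B}^\ua(Y_i)$ using right compatibility and the fact that $\mathcal{B}^\da$, $R^\da$ turn unions into intersections, together with left compatibility to see the image lies in $\Cl^-(\mathcal A)$. Your explicit treatment of the empty family and the fixpoint check are just spelled-out versions of what the paper leaves implicit.
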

\begin{proof}
Consider polarities $\mathcal A$ and $\mathcal B$ and a family $\{Y_i\}_i$ in $\cl^-(\mathcal B)$,
\begin{align*}
  R^\da {\mathcal B}^\ua (\bigcap_i Y_i)& = R^\da {\mathcal B}^\ua (\bigcap_i {\mathcal B}^\da {\mathcal B}^\ua(Y_i))\\ 
                            & = R^\da {\mathcal B}^\ua {\mathcal B}^\da(\bigcup_i {\mathcal B}^\ua(Y_i))\\  
                            & = R^\da(\bigcup_i {\mathcal B}^\ua(Y_i))\\
                            &= \bigcap_i R^\da {\mathcal B}^\ua(Y_i)
\end{align*}
Compatibility with $\mathcal A$ means that the image of $R^\da {\mathcal A}^\ua$ 
is in $\Cl^-(\mathcal A)$.
\end{proof}

Note that $\Cl^-$ is contravariant in spite of what the notation
suggests, because we have chosen to write composition in $\Pol$
diagramatically (as is appropriate for relations) and in $\INF$ applicatively (as is appropriate for functions).

For any object $L$ in $\INF$, define its \emph{obvious
  polarity} to be 
\[
\co{C}(L)\defeq (L,L,\leq_L).
\]
Clearly, $\mathord{\leq_L^\ua}(X)$
is the collection of all upper bounds of $X$, and similarly
$\mathord{\leq_L^\da}(X)$ is the collection of all lower bounds. 
So, $\cl_{\co{C}(L)}(X) = \da \Join X$
and $\cl^{\co{C}(L)}(X) = \ua\Meet X$.

For an $\INF$ morphism $\fromto hML$, define the relation
$\co{C}(h)\subseteq L\times M$ by 
\[
a\mathrel{\co{C}(h)} b \defiff a\leq_L
h(b).
\] 
Hence $\co{C}(\id_L) = \leq_L$.
Moreover, $\co{C}(h)^\da(Y) = \da h(\Meet Y)$, and
$\co{C}(h)^\ua(X) = \{b\in M\st \forall a\in X. a\leq_L
h(b)\}$.  
So, $\co{C}(h)^\ua(X) = \ua h_*(\Join X)$ -- recalling that $h_*$ is the left adjoint of $h$.
In summary, $\co{C}(h)$ is compatible with the polarities
$\co{C}(L)$ and $\co{C}(M)$.

\begin{lemma}\label{lem:cxt-to-inf-op}
  $\co{C}$ constitutes a contravariant functor from $\INF$ to $\Pol$.
\end{lemma}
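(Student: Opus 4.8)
The plan is to verify the two functor axioms, since the excerpt has already supplied everything analytic: each $\co{C}(L) = (L,L,\leq_L)$ is a polarity, and for each $\INF$ morphism $\fromto hML$ the relation $\co{C}(h)\subseteq L\times M$ is compatible with $\co{C}(L)$ on the left and $\co{C}(M)$ on the right, i.e. $\pfromto{\co{C}(h)}{\co{C}(L)}{\co{C}(M)}$ is a morphism of $\Pol$, together with the formula $\co{C}(h)^\da(Y) = \da\, h(\Meet Y)$. So it only remains to check that $\co{C}$ preserves identities and reverses composites.

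For identities: the excerpt records $\co{C}(\id_L) = \mathord{\leq_L} = \rel(\co{C}(L))$, which by Definition~\ref{def:category-cxt} is the identity morphism on $\co{C}(L)$; hence $\co{C}(\id_L) = \id_{\co{C}(L)}$.

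For composites: let $\fromto kNM$ and $\fromto hML$ be $\INF$ morphisms, so $\fromto{h\circ k}NL$, and after applying $\co{C}$ both $\co{C}(h\circ k)$ and the diagrammatic composite $\co{C}(h)\fatsemi_{\co{C}(M)}\co{C}(k)$ are compatible relations from $\co{C}(L)$ to $\co{C}(N)$. By Lemma~\ref{lem:bracket-da} the operation $R\mapsto R^\da$ is injective (indeed $\reln{R^\da}=R$), so it suffices to show the two $(-)^\da$ maps agree. By Lemma~\ref{lem:cxt-composition}, $\bigl(\co{C}(h)\fatsemi_{\co{C}(M)}\co{C}(k)\bigr)^\da = \co{C}(h)^\da\,\co{C}(M)^\ua\,\co{C}(k)^\da = \co{C}(h)^\da\,\mathord{\leq_M^\ua}\,\co{C}(k)^\da$. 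Evaluating at $Z\subseteq N$, using the stated formula for $\co{C}(-)^\da$ and the facts that $\mathord{\leq_M^\ua}$ carries the principal down-set $\da x$ to the principal up-set $\ua x$ and that $\Meet\ua x = x$, one computes
\begin{align*}
\co{C}(h)^\da\,\mathord{\leq_M^\ua}\,\co{C}(k)^\da(Z)
&= \co{C}(h)^\da\bigl(\mathord{\leq_M^\ua}(\da\, k(\Meet Z))\bigr)\\
&= \co{C}(h)^\da\bigl(\ua\, k(\Meet Z)\bigr)\\
&= \da\, h\bigl(k(\Meet Z)\bigr)\\
&= \da\, (h\circ k)(\Meet Z)\\
&= \co{C}(h\circ k)^\da(Z).
\end{align*}
Thus $\co{C}(h\circ k) = \co{C}(h)\fatsemi_{\co{C}(M)}\co{C}(k)$, which reverses the order of composition, so $\co{C}$ is a contravariant functor.

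The only place demanding care is bookkeeping: keeping straight that $\co{C}$ reverses arrows, that $\fatsemi$ is written diagrammatically while $\circ$ is applicative, and that the middle factor of the composite is the incidence relation $\mathord{\leq_M}$ of $\co{C}(M)$. No genuinely hard step appears, since the compatibility of each $\co{C}(h)$ and the behaviour of $\co{C}(h)^\da$ were established before the lemma statement; the proof is a short substitution using injectivity of $R\mapsto R^\da$.
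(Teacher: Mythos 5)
Your proof is correct and follows essentially the same route as the paper: both reduce the check to preservation of composition, compute $\bigl(\co{C}(h)\fatsemi_{\co{C}(M)}\co{C}(k)\bigr)^\da = \co{C}(h)^\da\,\mathord{\leq_M^\ua}\,\co{C}(k)^\da$ and compare it with $\co{C}(h\circ k)^\da$ using the fact that the maps preserve infima, the paper doing this pointwise ($a\leq h(b)$ for all $b\geq k(\Meet Z)$ iff $a\leq hk(\Meet Z)$) while you phrase it via principal down/up-sets and the injectivity $\reln{R^\da}=R$. Your explicit identity check is the fact the paper records just before the lemma, so nothing is missing.
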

\begin{proof}
  All that remains is to check that composition is
  preserved. 
  Suppose $\fromto gNM$ and $\fromto hML$ are morphisms in $\INF$.
  Because $hg$ preserves infima, $a\in \co{C}(hg)^\da(Z)$
  if and only if $a\leq hg(\Meet Z)$ for any $Z\subseteq N$.
  And because $g$ preserves infima, $a\in \co{C}(h)^\da \leq_M^\ua \co{C}(g)^\da(Z)$
  if and only if $a\leq h(b)$ for every $b\in M$
  satisfying $g(\Meet Z)\leq b$. 
\end{proof}

Now we have the ingredients for our main theorem.

\begin{theorem}\label{thm:cxt=inf-op}
  The category $\Pol$ is dually equivalent to the category $\INF$.
\end{theorem}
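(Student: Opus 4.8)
The approach is to exhibit $\Cl^-$ and $\co{C}$ as mutually quasi-inverse contravariant functors; the most economical packaging is to show that $\Cl^-$, regarded as a functor $\Pol^{\textsf{op}}\to\INF$, is full, faithful, and essentially surjective. Essential surjectivity is immediate from the formulas already recorded for $\co{C}(L)$: the closure operator on $\co{C}(L)=(L,L,\leq_L)$ is $\cl_{\co{C}(L)}(X)=\da\Join X$, so its fixpoints are exactly the principal down-sets $\da a$, and $a\mapsto\da a$ is an order isomorphism $L\to\Cl^-(\co{C}(L))$, hence an isomorphism in $\INF$. Faithfulness is nearly as quick: for a compatible relation $\pfromto R{\mathcal A}{\mathcal B}$, compatibility with $\mathcal B$ on the right gives $R^\da = R^\da{\mathcal B}^\ua{\mathcal B}^\da = \Cl^-(R)\circ{\mathcal B}^\da$, and ${\mathcal B}^\da$ takes values among the Galois closed subsets of $B^-$, so $R^\da$ --- and hence $R=\reln{R^\da}$ by Lemma~\ref{lem:bracket-da} --- is determined by $\Cl^-(R)$.

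The content lies in fullness. Given an $\INF$ morphism $\fromto f{\Cl^-(\mathcal B)}{\Cl^-(\mathcal A)}$, set $g\defeq f\circ{\mathcal B}^\da\colon\powerset(B^+)\to\powerset(A^-)$. Since ${\mathcal B}^\da$ carries unions to intersections and $f$ preserves infima --- which are intersections in both $\Cl^-(\mathcal B)$ and $\Cl^-(\mathcal A)$ --- the map $g$ is antitone and carries unions to intersections, so $g=R^\da$ for the relation $R\defeq\reln{g}$. The image of $f$ consists of Galois closed sets, so $\cl_{\mathcal A}\,g = g$, which by Lemma~\ref{lem:compatibility} is compatibility of $R$ with $\mathcal A$ on the left; and $g\,\cl^{\mathcal B} = f{\mathcal B}^\da{\mathcal B}^\ua{\mathcal B}^\da = f{\mathcal B}^\da = g$, which is compatibility with $\mathcal B$ on the right. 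Finally, for Galois closed $Y\subseteq B^-$ one computes $\Cl^-(R)(Y) = g({\mathcal B}^\ua(Y)) = f({\mathcal B}^\da{\mathcal B}^\ua(Y)) = f(\cl_{\mathcal B}(Y)) = f(Y)$, so $\Cl^-(R)=f$. This yields the dual equivalence, with $\co{C}$ recovered (up to natural isomorphism) as the quasi-inverse.

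For readers who prefer the unit/counit presentation, the informative datum is the counit: the isomorphism $\mathcal A\cong\co{C}(\Cl^-(\mathcal A))$ in $\Pol$ is witnessed by the \emph{membership relation} $\epsilon_{\mathcal A}\subseteq A^-\times\Cl^-(\mathcal A)$ with $a\mathrel{\epsilon_{\mathcal A}}C\defiff a\in C$. One computes directly that $\epsilon_{\mathcal A}^\da(\mathbf C)=\bigcap\mathbf C$ and $\epsilon_{\mathcal A}^\ua(X)=\{C\in\Cl^-(\mathcal A)\st X\subseteq C\}$, whence $\epsilon_{\mathcal A}^\da\epsilon_{\mathcal A}^\ua=\cl_{\mathcal A}$ and $\epsilon_{\mathcal A}^\ua\epsilon_{\mathcal A}^\da=\cl^{\co{C}(\Cl^-(\mathcal A))}$. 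Thus $\epsilon_{\mathcal A}$ is compatible by Lemma~\ref{lem:compatibility}, is both a monomorphism and an epimorphism by Lemma~\ref{lem:cxt-monos}, and so is an isomorphism by Lemma~\ref{lem:Cxt is balanced}; the matching isomorphism $L\cong\Cl^-(\co{C}(L))$ is $a\mapsto\da a$ from above.

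I expect the main obstacle to be fullness --- producing a compatible relation that realizes a prescribed infimum-preserving map and verifying it is sent to that map by $\Cl^-$ --- although, as sketched, it collapses to the two identities $\cl_{\mathcal A}\,g=g$ and $g\,\cl^{\mathcal B}=g$ together with the observation that $f$ restricted to the Galois closed sets is undone by ${\mathcal B}^\ua$ followed by $g$. If one instead insists on the unit/counit route, the residual cost is the naturality squares for $\epsilon$ and for $a\mapsto\da a$, which are a routine diagram chase using the identity $(P\fatsemi S)^\da = P^\da{\mathcal M}^\ua S^\da$ for the intervening object $\mathcal M$.
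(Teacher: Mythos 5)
Your proposal is correct, but it is organized around a different criterion than the paper's proof. The paper establishes the duality concretely: having already built the contravariant functor $\co{C}$ (Lemma~\ref{lem:cxt-to-inf-op}), it exhibits the two natural isomorphisms explicitly --- the membership relation $\epsilon_{\mathcal A}\colon \mathcal A\to\co{C}(\Cl^-(\mathcal A))$, shown to be an isomorphism by computing $\epsilon_{\mathcal A}^\da\epsilon_{\mathcal A}^\ua=\cl_{\mathcal A}$ and $\epsilon_{\mathcal A}^\ua\epsilon_{\mathcal A}^\da=\cl^{\co{C}(\Cl^-(\mathcal A))}$, and $x\mapsto\da x$ on the lattice side --- and then checks both naturality squares. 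Your primary route instead shows $\Cl^-$ is fully faithful and essentially surjective, so that $\co{C}$ is only needed as a source of preimages for essential surjectivity and no naturality computations are required. The genuinely new content in your version is the fullness construction $R\defeq\reln{f\circ{\mathcal B}^\da}$: the verification that $f\circ{\mathcal B}^\da$ sends unions to intersections (infima in the closure systems being intersections), that $\cl_{\mathcal A}\,g=g$ and $g\,\cl^{\mathcal B}=g$ give compatibility via Lemma~\ref{lem:compatibility}, and that $\Cl^-(R)=f$ on Galois closed sets; your faithfulness argument via $R^\da=\Cl^-(R)\circ{\mathcal B}^\da$ and $R=\reln{R^\da}$ is likewise sound. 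What each approach buys: yours is shorter and sidesteps the naturality chases (at the cost of invoking the abstract fact, requiring choice, that a fully faithful essentially surjective functor is an equivalence), while the paper's route produces an explicit quasi-inverse and explicit (iso)units, which is what later sections actually use when transporting structure between $\Pol$ and $\INF$. Your secondary sketch of the counit, including the justification of ``iso'' via Lemma~\ref{lem:cxt-monos} and Lemma~\ref{lem:Cxt is balanced}, in fact supplies the detail the paper leaves implicit at the step ``so $\epsilon_{\mathcal A}$ is an isomorphism,'' though there you defer exactly the naturality checks that constitute the remaining work on that route.
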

\begin{proof}
  For a given polarity $\mathcal A$, $\co{C}(\Cl^-(\mathcal A))$ is the
  polarity $(\Cl^-(\mathcal A),\Cl^-(\mathcal A),\subseteq)$.
  Define
  $\epsilon_{\mathcal A}\subseteq A^-\times \Cl^-(\mathcal A)$ as
  the membership relation restricted to $A^-$ and $\Cl^-(\mathcal A)$.

  So for a family $\Xi\subseteq \Cl^-(\mathcal A)$, 
  $\epsilon_{\mathcal A}^\da(\Xi) = \bigcap_{X\in\Xi} X$. 
  For $X\subseteq A^-$,
  $\epsilon_{\mathcal A}^\ua(X) = \{Y\in \cl^-(\mathcal{X}) \st X\subseteq Y\}$.
  Hence $\epsilon_{\mathcal A}^\da \epsilon_{\mathcal A}^\ua(Y) = \cl_{\mathcal A}(Y)$ and \[\epsilon_{\mathcal A}^\ua\epsilon_{\mathcal A}^\da(\{X_i\}_i) = 
  \{Y\in \Cl^-(\mathcal{A})\st Y \supseteq \bigcap_i X_i\}.\] 
  The latter is clearly the Galois closure $\mathord{\subseteq^\ua}\mathord{\subseteq^\da}(\{X_i\}_i)$ 
  in the polarity $\co{C}(\Cl^-(\mathcal A))$.  So
  $\epsilon_{\mathcal A}$ is an isomorphism. 
  
  For naturality, suppose $\pfromto R{\mathcal A}{\mathcal B}$
  is a compatible relation. 
  Then $R\fatsemi \epsilon_{\mathcal B}$ and $\epsilon_{\mathcal A} \fatsemi \co{C}(\Cl^-(R))$
  can be compared as follows. For $a\in A^-$ and $Y\in \Cl^-(\mathcal B)$, it is the case that
  $a\mathrel{R\fatsemi \epsilon_{\mathcal B}} Y$
  if and only if $a\in R^\da {\mathcal B}^\ua(Y)$, which is equivalent to ${\mathcal B}^\ua (Y) \subseteq R[a]$. And $\co{C}(\Cl^-(R))^\da$ sends $Y$ to the principle downset in $\Cl^-(\mathcal A)$
  generated by $R^\da {\mathcal B}^\ua(Y)$.
  The incidence relation on $\co{C}(\Cl^-(\mathcal A))$ is $\subseteq$ restricted
  to Galois closed subsets of $A^-$, so $\co{C}(\Cl^-(\mathcal A))^\ua \co{C}(\Cl^-(R))^\da(Y)$ is just the upset in $\Cl^-(\mathcal A)$ generated by $R^\da {\mathcal B}^\ua(Y)$.
  Finally, $\epsilon_{\mathcal A}$ is membership, so
  $a\mathrel{\epsilon_{\mathcal A} \fatsemi \co{C}(\Cl^-(R))} Y$
  if and only if $a\in R^\da{\mathcal B}^\ua(Y)$.
  
  For a complete lattice $L$, $\Cl^-(\co{C}(L))$ consists precisely of
  the principle downsets of $L$. Since $L$ is a complete lattice,  $x\mapsto \da x$ is an isomorphism. 
  Naturality follows
from the simple calculation that for an $\INF$ morphism
$\fromto hML$, $\Cl^-(\co{C}(h))$ sends a principle downset $\da x$
to the principle downset $\da h(x)$. 
\end{proof}

\section{Complete Lattices}\label{sec:complete-lattices}

The duality between categories $\Pol$ and $\INF$
is actually a $2$-category fact because it preserves order on morphisms,
and of course $\fatsemi$-composition is monotonic on both sides.
This provides an easy way to locate the
morphisms in $\Pol$ that correspond to complete lattice homomorphisms.
Namely, they will be precisely the ``maps'', i.e., those morphisms
that possess a left adjoint. In addition, we find other, more direct characterizations.

\begin{theorem}\label{thm:cxt-adjoints}
  For a compatible relation $\pfromto R{\mathcal A}{\mathcal B}$ between polarities $\mathcal A$ and $\mathcal B$, define $R_*\subseteq B^-\times A^+$
as $R_* = \reln{{\mathcal B}^\da R^\ua {\mathcal A}^\da}$. Then the following are equivalent.
  \begin{enumerate}
  \item $\Cl^-(R)$ preserves arbitrary joins;
  \item there exists compatible $\pfromto S{\mathcal B}{\mathcal A}$ so that ${\mathcal A} \subseteq R\fatsemi S$ 
and $S\fatsemi R \subseteq {\mathcal B}$;
  \item there exists compatible $\pfromto S{\mathcal B}{\mathcal A}$ so that $R^\ua {\mathcal A}^\da = {\mathcal B}^\ua S^\da$;
  \item $R_*$ is compatible and $R^\ua {\mathcal A}^\da = {\mathcal B}^\ua R_*^\da$.
  \item $R_*$ is compatible and ${\mathcal B}^\da R^\ua = R_*^\da {\mathcal A}^\ua$.
  \item $R_*^\da = {\mathcal B}^\da R^\ua {\mathcal A}^\da$.
  \end{enumerate}
\end{theorem}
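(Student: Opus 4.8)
The strategy is to exploit the duality of Theorem~\ref{thm:cxt=inf-op}: under $\Cl^-$, the relation $R$ becomes the monotone map $R^\da\mathcal B^\ua \colon \Cl^-(\mathcal B)\to\Cl^-(\mathcal A)$, and ``preserves joins'' is exactly the condition that this map has a left adjoint in $\INF$. Since morphisms in $\INF$ between complete lattices always have left adjoints living in $\SUP$, the real content is to identify that left adjoint concretely as a compatible relation, and this is what the candidate $R_* = \reln{\mathcal B^\da R^\ua \mathcal A^\da}$ is designed to do. I would organize the proof as a cycle of implications, say $(1)\Rightarrow(3)\Rightarrow(2)\Rightarrow(1)$ and separately $(3)\Leftrightarrow(4)\Leftrightarrow(5)\Leftrightarrow(6)$, so that the abstract adjunction argument and the combinatorial identities are cleanly separated.

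First I would record the translation: $\Cl^-(R)$ preserves joins in $\Cl^-(\mathcal B)$ iff it has a left adjoint $\ell\colon\Cl^-(\mathcal A)\to\Cl^-(\mathcal B)$, and since $\Cl^-$ is a dual equivalence, $\ell = \Cl^-(S)$ for a unique compatible $S\colon\mathcal B\to\mathcal A$; the adjunction $\ell\dashv\Cl^-(R)$ translates, via $2$-functoriality of $\Cl^-$ (order on homs is preserved, as noted at the start of Section~\ref{sec:complete-lattices}), into the inequalities $\mathcal A\subseteq R\fatsemi S$ and $S\fatsemi R\subseteq\mathcal B$ of (2). Conversely those inequalities say $\id_{\Cl^-(\mathcal A)}\leq \Cl^-(R)\Cl^-(S)$ composed appropriately and $\Cl^-(S)\Cl^-(R)\leq\id$, i.e. an adjunction, giving (1). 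For $(2)\Leftrightarrow(3)$: unwinding $R\fatsemi S$ and $S\fatsemi R$ through $(R\fatsemi S)^\da = R^\da\mathcal B^\ua S^\da$ and using that $\mathcal A\subseteq R\fatsemi S$ iff $\mathcal A^\da\leq R^\da\mathcal B^\ua S^\da$, one shows these two containments together are equivalent to the single equality $R^\ua\mathcal A^\da = \mathcal B^\ua S^\da$ (taking adjoints/transposes of each inequality and combining); this is the step that most wants care, because one inequality controls $R\fatsemi S$ from below and the other controls $S\fatsemi R$ from above, and the equality of (3) packages both.

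For the equivalences among $(3)$, $(4)$, $(5)$, $(6)$: the point is that if \emph{any} compatible $S$ satisfies $R^\ua\mathcal A^\da = \mathcal B^\ua S^\da$, then applying $\mathcal B^\da$ on the left and using $\mathcal B^\da\mathcal B^\ua S^\da = S^\da$ (compatibility of $S$ with $\mathcal B$ on the left) forces $S^\da = \mathcal B^\da R^\ua\mathcal A^\da$, hence $S = \reln{\mathcal B^\da R^\ua\mathcal A^\da} = R_*$ by Lemma~\ref{lem:bracket-da}; so the $S$ in (3) is necessarily $R_*$, and $R_*$ is then compatible. This gives $(3)\Rightarrow(4)$, and $(4)\Rightarrow(3)$ is immediate. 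Then $(4)\Leftrightarrow(6)$ follows since $\mathcal B^\ua R_*^\da = R^\ua\mathcal A^\da$ combined with compatibility yields $R_*^\da = \mathcal B^\da R^\ua\mathcal A^\da$ and conversely (6) plus the Galois machinery recovers (4); and $(5)$ is the mirror identity obtained by transposing, i.e. applying the self-duality of $\Pol$ (Lemma~\ref{lem:cxt-selfdual}), which swaps $R$ with $R^\intercal$, $\mathcal A$ with $\mathcal B^\partial$, and turns the left-adjoint condition into its dual; I would simply remark that $(5)$ is $(6)$ read through $\partial$, or verify it directly by the same one-line manipulation.

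The main obstacle I anticipate is the careful bookkeeping in $(2)\Leftrightarrow(3)$ and in showing $R_*$ is \emph{compatible} whenever a left adjoint exists: compatibility of $R_*$ does not come for free from the formula $R_* = \reln{\mathcal B^\da R^\ua\mathcal A^\da}$, and the cleanest route is the one above --- first produce \emph{some} compatible $S$ from the abstract adjunction via the dual equivalence, then prove $S$ must equal $R_*$, which retroactively certifies $R_*$'s compatibility. Everything else is a routine unwinding of the definitions of $\fatsemi$, $\reln{-}$, $(-)^\da$, $(-)^\ua$ together with the standing identities $\cl_{\mathcal A}\mathcal A^\da = \mathcal A^\da$ and $R^\da = R^\da R^\ua R^\da$.
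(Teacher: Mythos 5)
Your plan is essentially the paper's: the paper also dispatches the link between (1) and (2) in one sentence via the dual equivalence and handles everything else by short operator identities, the only organizational difference being that the paper runs the cycle $(6)\Rightarrow(4)\Rightarrow(3)\Rightarrow(2)\Rightarrow(6)$ together with $(4)\Leftrightarrow(5)$, whereas you prove $(2)\Leftrightarrow(3)$ directly and then force $S=R_*$. The computations you gesture at do go through and in substance coincide with the paper's: from $\mathcal A\subseteq R\fatsemi S$ one gets ${\mathcal B}^\ua S^\da\leq R^\ua{\mathcal A}^\da$, from $S\fatsemi R\subseteq\mathcal B$ one gets $S^\da=S^\da{\mathcal A}^\ua{\mathcal A}^\da\leq S^\da{\mathcal A}^\ua R^\da R^\ua{\mathcal A}^\da\leq{\mathcal B}^\da R^\ua{\mathcal A}^\da$, and applying ${\mathcal B}^\ua$ with $\cl^{\mathcal B}R^\ua=R^\ua$ yields the missing inequality for (3); and your observation that (3) forces $S^\da={\mathcal B}^\da R^\ua{\mathcal A}^\da$, hence $S=\reln{S^\da}=R_*$ by Lemma~\ref{lem:bracket-da}, is exactly the paper's $(2)\Rightarrow(6)$ step and is the right way to certify compatibility of $R_*$.

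The one step that would fail as written is your shortcut for (5): it is not true that (5) is ``(6) read through $\partial$''. Transposition replaces $R$ by $\pfromto{R^\intercal}{\mathcal B^\partial}{\mathcal A^\partial}$, and condition (6) for $R^\intercal$ concerns $\reln{{\mathcal A}^\ua R^\da{\mathcal B}^\ua}$, the candidate adjoint on the \emph{other} side; $(R^\intercal)_*$ is not $(R_*)^\intercal$ in general, and the $\partial$-dual of (6) is the condition characterizing the opposite adjoint, not a restatement of (5) --- already for $R=\co{C}(h)$ with $h$ the inclusion of the two-element chain into $[0,1]$ the two conditions come apart. So you must take your own fallback and verify $(4)\Leftrightarrow(5)$ directly, e.g.\ ${\mathcal B}^\da R^\ua={\mathcal B}^\da R^\ua{\mathcal A}^\da{\mathcal A}^\ua={\mathcal B}^\da{\mathcal B}^\ua R_*^\da{\mathcal A}^\ua=R_*^\da{\mathcal A}^\ua$, which is precisely what the paper does. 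A second caution, at $(1)\Leftrightarrow(2)$: be precise about which adjoint you mean. A meet-preserving map always has a left adjoint (in $\SUP$), whereas preserving arbitrary joins is the existence of a right adjoint; what the inclusions $\mathcal A\subseteq R\fatsemi S$ and $S\fatsemi R\subseteq\mathcal B$ literally give after applying the contravariant $\Cl^-$ (which interchanges the two sides of an adjunction) is an adjoint of $\Cl^-(R)$ that is realized by an $\INF$-morphism $\Cl^-(S)$, and asserting ``preserves joins iff it has a left adjoint in $\INF$'' conflates these; you should work out explicitly which of the two adjunctions the unit--counit inequalities in (2) express. The paper's own write-up of this step is equally terse, so I flag this as a point your proof must make precise rather than as a defect peculiar to your proposal.
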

\begin{proof}
  Equivalence of (1) and (2) follows from the fact that the map
$\Cl^-(R)$ preserves
joins if and only if it has a left adjoint,
and that this adjoint must also preserve arbitrary meets. Hence it must be $\Cl^-(S)$ for some
compatible $S$. Conversely, if an $S$ satisfies the condition in (2), then $\Cl^-(S)$ 
is the desired adjoint to $\Cl^-(R)$.

[(6) implies (4)] This is immediate because $R$ is assumed to be compatible. 

[(4) implies (3)] Trivial.

[(3) implies (2)] From (3), ${\mathcal A}^\da \leq R^\da R^\ua {\mathcal A}^\da = R^\da {\mathcal B}^\ua S^\da$. So ${\mathcal A} \subseteq R\fatsemi S$.
And likewise, 
\begin{align*}
  (S\fatsemi R)^\da &= S^\da {\mathcal A}^\ua R^\da\\
  & = {\mathcal B}^\da {\mathcal B}^\ua S^\da {\mathcal A}^\ua R^\da\\
  & = {\mathcal B}^\da R^\ua {\mathcal A}^\da {\mathcal A}^\ua R^\da\\
  & = {\mathcal B}^\da R^\ua R^\da \leq {\mathcal B}^\da.
\end{align*}
                
[(2) implies (6)] By virtue of the Galois connection, ${\mathcal A}\subseteq R\fatsemi S$ if and only if ${\mathcal B}^\ua S^\da \leq R^\ua {\mathcal A}^\da$,
  which implies ${\mathcal B}^\da R^\ua {\mathcal A}^\da\leq {\mathcal B}^\da {\mathcal B}^\ua S^\da = S^\da$.
  Hence $R_*\subseteq S$. Also $S^\da = {\mathcal B}^\da {\mathcal B}^\ua S^\da = {\mathcal B}^\da R^\ua {\mathcal A}^\da$. 

  For the other inclusions,
  \begin{align*}
    S^\da &= S^\da {\mathcal A}^\ua {\mathcal A}^\da\\
    &\leq S^\da {\mathcal A}^\ua R^\da R^\ua {\mathcal A}^\da\\
    &\leq {\mathcal B}^\da R^\ua {\mathcal A}^\da\\
    &\leq \reln{{\mathcal B}^\da R^\ua {\mathcal A}^\da}^\da
  \end{align*}
  so $S\subseteq R_*$ and $S^\da \leq {\mathcal B}^\da R^\ua {\mathcal A}^\da$.

  Suppose (4). Then ${\mathcal B}^\da R^\ua = {\mathcal B}^\da R^\ua {\mathcal A}^\da {\mathcal A}^\ua = {\mathcal B}^\da {\mathcal B}^\ua R_*^\da {\mathcal A}^\ua = R_*^\da {\mathcal A}\ua$.
  Suppose (5). Then $R^\ua {\mathcal A}^\da = {\mathcal B}^\ua {\mathcal B}^\da R^\ua {\mathcal A}^\da = {\mathcal B}^\ua R_*^\da {\mathcal A}^\ua {\mathcal A}^\da = {\mathcal B}^\ua R_*^\da.$
\end{proof}

Thus we have a duality for complete lattices.

\begin{theorem}\label{thm:mapcxt=clat}
  The subcategory of $\Pol$ consisting of polarities and compatible relations
  $\pfromto R{\mathcal A}{\mathcal B}$ for which $\reln{{\mathcal B}^\da R^\ua {\mathcal A}^\da}^\da = {\mathcal B}^\da R^\ua {\mathcal A}^\da$
  is dually equivalent to the category $\CLat$.
\end{theorem}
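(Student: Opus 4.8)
The plan is to assemble this duality from the pieces already in place: Theorem~\ref{thm:cxt=inf-op} gives the dual equivalence $\Pol \simeq \INF^{\textsf{op}}$ via $\Cl^-$ and $\co C$, and it is classical that $\CLat$ sits inside $\INF$ by keeping the same objects and restricting to the $\INF$-morphisms that also preserve arbitrary joins (equivalently, possess a left adjoint). Theorem~\ref{thm:cxt-adjoints} already translated ``$\Cl^-(R)$ preserves joins'' into purely combinatorial conditions on $R$; in particular, condition~(6) of that theorem is exactly $\reln{{\mathcal B}^\da R^\ua {\mathcal A}^\da}^\da = {\mathcal B}^\da R^\ua {\mathcal A}^\da$, which is the defining condition of the subcategory named in the statement. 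So the first step is simply to observe that the morphisms singled out in the present theorem are precisely those $R$ for which $\Cl^-(R)$ preserves joins, i.e. those $\Cl^-(R)$ that land in $\CLat$.

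Next I would check that these morphisms form a subcategory of $\Pol$, i.e. that the condition is closed under $\fatsemi$-composition and contains the identities. For identities this is the computation $\reln{{\mathcal A}^\da {\mathcal A}^\ua {\mathcal A}^\da}^\da = \reln{{\mathcal A}^\da}^\da = {\mathcal A}^\da$ using ${\mathcal A}^\da {\mathcal A}^\ua {\mathcal A}^\da = {\mathcal A}^\da$. For composition, rather than manipulate the bracket expression directly, I would route through the equivalence: if $\Cl^-(R)$ and $\Cl^-(S)$ both preserve joins then so does their composite $\Cl^-(R)\Cl^-(S) = \Cl^-(R\fatsemi S)$ (using functoriality of $\Cl^-$ from the previous section), hence $R\fatsemi S$ again satisfies condition~(6). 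This shows the subcategory is well defined without any further calculation.

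The remaining step is to verify that $\Cl^-$ and $\co C$ restrict and co-restrict to an adjoint equivalence between this subcategory and $\CLat^{\textsf{op}}$. Since $\Cl^-$ and $\co C$ are already a dual equivalence $\Pol \simeq \INF^{\textsf{op}}$, and the subcategory of $\Pol$ in question is by construction the $\Cl^-$-preimage of $\CLat \subseteq \INF$, the only thing to check is that $\co C$ sends $\CLat$-morphisms back into this subcategory --- equivalently, that for an $\INF$-morphism $h$ preserving joins, $\Cl^-(\co C(h))$ again preserves joins; but $\Cl^-(\co C(h))$ is naturally isomorphic to $h$ (this is the naturality computation in the proof of Theorem~\ref{thm:cxt=inf-op}, sending $\da x$ to $\da h(x)$), so it preserves exactly the limits and colimits that $h$ does. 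The unit and counit natural isomorphisms $\epsilon_{\mathcal A}$ and $x\mapsto \da x$ from Theorem~\ref{thm:cxt=inf-op} restrict verbatim, since a restriction of a natural isomorphism to a full subcategory is still a natural isomorphism.

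I expect no serious obstacle: everything reduces to Theorems~\ref{thm:cxt=inf-op} and~\ref{thm:cxt-adjoints} plus the standard fact that $\CLat \hookrightarrow \INF$ is the (non-full, but wide) subcategory of join-preserving maps. The one point requiring a word of care is that $\CLat$ is \emph{not} a full subcategory of $\INF$, so one should phrase the claim as an equivalence of categories (not ``full subcategory of $\Pol$''); correspondingly the subcategory of $\Pol$ described in the statement is wide but not full, and the match-up of hom-sets is exactly the content of the equivalence $\Cl^-(R) \in \CLat \iff R$ satisfies condition~(6), which is immediate from Theorem~\ref{thm:cxt-adjoints}.
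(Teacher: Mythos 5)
Your proposal is correct and follows exactly the route the paper intends: the stated condition is condition (6) of Theorem~\ref{thm:cxt-adjoints}, equivalent to $\Cl^-(R)$ preserving arbitrary joins, so the subcategory is the preimage of $\CLat\subseteq\INF$ under the dual equivalence of Theorem~\ref{thm:cxt=inf-op}, which therefore restricts. The paper leaves this as an immediate corollary; your write-up just makes the bookkeeping (identities, closure under $\fatsemi$, restriction of the natural isomorphisms, and the wide-but-not-full caveat) explicit.
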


\section{RS-frames}\label{sec:monos-epis}

There is a simple procedure for constructing certain
monomorphisms and epimorphisms in $\Pol$. 
Suppose $\mathcal A$ is a polarity and
$X\subseteq A^-$. 
Define the \emph{$-$-restriction} ${\mathcal A}\upharpoonright{X}$
as the polarity  $(X, A^+; \mathcal A\cap (X\times A^+))$. 
Similarly, for $\Xi\subseteq A^+$,
define the \emph{$+$-restriction} ${\mathcal A}\downharpoonright{\Xi}$ analogously, or if the reader 
prefers, as ${\mathcal A}\downharpoonright{\Xi} = ({\mathcal A}^\partial\upharpoonright{\Xi})^\partial$.  
One can check that
\begin{align*}
  ({\mathcal A}\upharpoonright{X})^\ua(X') &= \mathcal A^\ua(X') & \mbox{for $X'\subseteq X$}\\
  ({\mathcal A}\upharpoonright{X})^\da(\Xi) &= \mathcal A^\da(\Xi)\cap X & \mbox{for $\Xi\subseteq A^+$}
\end{align*}
So the relation ${\mathcal A}\upharpoonright{X}$ is a monomorphic compatible relation from
the polarity ${\mathcal A}\upharpoonright{X}$ to ${\mathcal A}$. Similarly, ${\mathcal A}\downharpoonright \Xi$
is an epimorphic compatible relation from ${\mathcal A}$ to ${\mathcal A}\downharpoonright \Xi$.

By the foregoing paragraph, every subset of $A^-$ determines a sub-object, but the converse
fails: there are monomorphisms (indeed, isomorphisms) in $\Pol$
for which the domain is not isomorphic to any
${\mathcal A}\upharpoonright{X}$.  Polarities
$P(A) = (A,A,\neq)$ provide examples of this.

Recall that in Gerhke's RS-frames, the S
stands for \emph{separating}, precisely in the sense of Lemma \ref{lem:separating}.
The $R$ stands for \emph{reduced}. Suppose we have a separating polarity $\mathcal A$. 
We can understand reduced polarities (in Gehrke's terms, reduced frames) as follows.

Restriction of $A^-$ to a subset $X$ produces a monomorphism into $\mathcal A$ that may actually be an isomorphism.
In that case, the inverse is given by the relation $\rel(\mathcal A)$
as a morphism from $\mathcal A$ to $\mathcal A\upharpoonright X$.

For a polarity $\mathcal A$ and $X\subseteq A^-$, say that ${\mathcal A}\upharpoonright{X}$ is a $-$ reduction
of $\mathcal A$ if ${\mathcal A}\upharpoonright{X}$ is  isomorphic. By dualizing,
we also have a notion of $+$ reduction.
Say that $\mathcal A$ is \emph{reduced} if it has no proper $-$ or $+$ reductions. 
In effect,
we can not remove anything from $A^-$ or $A^+$ while staying in the isomorphism class of $\mathcal A$.

Gehrke's RS-frames \cite{gehrke_generalized_2006} are precisely the separating, reduced polarities. 
The reader
may find it useful to chase the definition of ``reduced'' as it is found in the formal context literature
(essentially the same as Gehrke's) to see that it is precisely the one given here. 
The point is 
that our account of reduction gives a natural motivation for the definition.
 
Importantly,
there are polarities that are not isomorphic to any RS-frame. Consider the polarity $Q=(\QQ,\QQ,\leq)$.
For any two order dense subsets of $\QQ$, say $A$ and $B$, the usual back and forth proof that all countable
dense linear orders without endpoints are isomorphic adapts to show that $(A,B,\leq)$ is a proper
reduction of $Q$.

Without separation, there is no upper bound on the cardinality of polarities that are isomorphic to a given $\mathcal A$.
That is, there is no upper bound on the cardinality of polarities $\mathcal B$ for which $\mathcal A$ is
a reduction. 
But if we focus attention on separating polarities, 
    then it is quite easy to see that
$\co{C}(\Cl^-(\mathcal A))$ is the largest separating polarity isomorphic to $\mathcal A$.

\section{Completeness of $\Pol$}\label{sec:completeness}

Next, we turn our attention to the existence of limits and co-limits.  
The spirit of the last section is to 
demonstrate that a desired polarity may be constructed directly from the data we are given.
One would hope that the construction of limits and co-limits would follow a similar pattern. Of course,
we know that $\INF$ is (co)complete. So a co-limit in $\Pol$ can be found simply by sneaking over to $\INF$
(contravariantly), finding the corresponding limit there, and then sneaking back. 
The result can be quite unwieldy, as the functorial image $\Cl^-(\mathcal A)$ produces a complete lattice of subsets of $A^-$. We offer a purely \emph{polarity-theoretic}
construction that does not use this indirect method. 
The self-duality of $\Pol$ means that the 
construction of limits is the same as the construction of co-limits, so we concentrate
on limits here. Indeed, products and co-products coincide in $\Pol$ for very obvious reasons. They coincide in $\INF$, of course. 
But that fact is not so obvious directly in $\INF$.

To describe polarity constructions, we will need standard notation for products and coproducts (disjoint unions) in sets.
As usual $\prod_{i}X_i$ denotes a cartesian product where the set of indices is omitted. 
For $\mathbf x\in \prod_{i}X_i$, and $k$ an index, $\mathbf x_k$ is the projection.
And $\sum_{i}X_i$ is the disjoint union.
For index $k$, and $x\in X_k$, ${}_k x \in \sum_{i}X_i$ is the insertion of $x$ in the disjoint union.

For a family $\{{\mathcal A}_i\}_{i}$ of polarities, 
    define the product polarity $\prod_i\mathcal A_i$ by the following data.
    \begin{align*}
       (\prod_i\mathcal A_i)^+ &= \sum_i A_{i+}\\
       (\prod_i\mathcal A_i)^- &= \sum_i A_{i-}\\
       {}_j a (\prod_i\mathrel{A_i}) {}_k \alpha&\iff j = k \to a\mathrel{\mathcal A_k} \alpha 
    \end{align*}

\begin{lemma}\label{lem:cxt-products}
For any family of polarities $\{\mathcal A_i\}_i$, 
    the polarity $\prod_i \mathcal A_i$ is the product in $\Pol$. 
\end{lemma}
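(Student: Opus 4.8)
The plan is to verify directly that $\prod_i \mathcal A_i$, equipped with the evident projection morphisms, satisfies the universal property of a product in $\Pol$. First I would identify the projection morphism $\pi_k \colon \prod_i \mathcal A_i \to \mathcal A_k$ as a compatible relation from $\prod_i \mathcal A_i$ to $\mathcal A_k$, namely the relation $\pi_k \subseteq (\sum_i A_{i-}) \times A_{k+}$ given by ${}_j a \mathrel{\pi_k} \alpha \iff j = k \text{ and } a \mathrel{\mathcal A_k} \alpha$. I would check compatibility of each $\pi_k$ using the test in Lemma~\ref{lem:compatibility}(4): if $\mathbf{X} \times \Upsilon \subseteq \pi_k$ then $\mathbf{X}$ is contained in the $k$-th summand and its $k$-component $X$ satisfies $X \times \Upsilon \subseteq \mathcal A_k$, whence $\cl_{\mathcal A_k}(X) \times \cl^{\mathcal A_k}(\Upsilon) \subseteq \mathcal A_k$; transferring this back up the disjoint union gives compatibility of $\pi_k$ with $\prod_i\mathcal A_i$ on the left and with $\mathcal A_k$ on the right, once I have computed $\cl_{\prod_i \mathcal A_i}$ in terms of the $\cl_{\mathcal A_i}$.

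The key computational lemma, which I would establish first, is an explicit description of the Galois maps of $\prod_i\mathcal A_i$. For $\mathbf{X} \subseteq \sum_i A_{i-}$ writing $X^{(i)}$ for its $i$-th component, one checks from the definition of the incidence relation that $(\prod_i \mathcal A_i)^\ua(\mathbf X) = \sum_i \mathcal A_i^\ua(X^{(i)})$, and dually $(\prod_i\mathcal A_i)^\da(\mathbf \Xi) = \sum_i \mathcal A_i^\da(\Xi^{(i)})$. Consequently $\cl_{\prod_i\mathcal A_i}(\mathbf X) = \sum_i \cl_{\mathcal A_i}(X^{(i)})$, so the Galois closed subsets of $\sum_i A_{i-}$ are exactly the disjoint unions of Galois closed subsets of the factors, i.e. $\Cl^-(\prod_i\mathcal A_i) \cong \prod_i \Cl^-(\mathcal A_i)$ as complete lattices. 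This makes the verification of the universal property almost mechanical.

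Next, given a cone $\{\fromto{}{} {R_i}{\mathcal B}{\mathcal A_i}\}_i$ of compatible relations, I would define the mediating morphism $R \colon \mathcal B \to \prod_i \mathcal A_i$ by $b \mathrel{R} {}_k\alpha \iff b \mathrel{R_k} \alpha$. Compatibility with $\mathcal B$ on the left is inherited componentwise from each $R_k$; compatibility with $\prod_i \mathcal A_i$ on the right follows from the description of $\cl^{\prod_i\mathcal A_i}$ just obtained together with compatibility of each $R_k$ with $\mathcal A_k$ on the right. Then $R \fatsemi \pi_k = R_k$ is a direct unwinding: $(R\fatsemi\pi_k)^\da = R^\da (\prod_i\mathcal A_i)^\ua \pi_k^\da$, and since $\pi_k^\da$ lands in the $k$-th summand and $(\prod_i\mathcal A_i)^\ua$ acts as $\mathcal A_k^\ua$ there, this collapses to $R_k^\da (\mathcal A_k)^\ua (\mathcal A_k)^\da = R_k^\da$ after using compatibility of $R_k$; hence $R\fatsemi\pi_k = \reln{R_k^\da} = R_k$. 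For uniqueness, if $R'$ also satisfies $R'\fatsemi\pi_k = R_k$ for all $k$, then comparing $(R')^\da$ on singletons $\{{}_k\alpha\}$ — where the defining computation shows $(R'\fatsemi\pi_k)^\da(\{\alpha\})$ recovers the ``$k$-component'' behaviour of $R'$ — forces $R' = R$.

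The main obstacle I anticipate is bookkeeping rather than conceptual: getting the Galois maps of the product polarity correct, in particular being careful that $(\prod_i\mathcal A_i)^\ua$ of a set $\mathbf X$ spread across several summands really is the disjoint union $\sum_i \mathcal A_i^\ua(X^{(i)})$ and not something smaller — this uses exactly the clause $j = k \to a\mathrel{\mathcal A_k}\alpha$ in the definition, which makes elements in the $j$-th summand impose no constraint on related elements of the $k$-th summand for $j \ne k$. Once that identity and the resulting formula $\cl_{\prod_i\mathcal A_i}(\mathbf X) = \sum_i\cl_{\mathcal A_i}(X^{(i)})$ are in hand, every remaining step is a short calculation with the $(-)^\da$, $(-)^\ua$ notation and the adjunction $\reln{R^\da} = R$ of Lemma~\ref{lem:bracket-da}. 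A remark noting that by self-duality (Lemma~\ref{lem:cxt-selfdual}) the same polarity $\prod_i\mathcal A_i$ also serves as the coproduct would close the discussion.
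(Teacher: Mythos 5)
There is a genuine gap, and it is in the definition of the projections: you set ${}_j a \mathrel{\pi_k} \alpha$ iff $j = k$ \emph{and} $a \mathrel{\mathcal A_k} \alpha$, whereas the paper's projection $P_k$ (mirroring the product's own incidence relation) uses the implication $j = k \to a \mathrel{\mathcal A_k}\alpha$, so that every element of a summand $A_{j-}$ with $j\neq k$ is related to \emph{all} of $A_{k+}$. The difference is fatal, and your own (correct) closure formula exposes it: $\cl_{\prod_i\mathcal A_i}(\mathbf X) = \sum_i \cl_{\mathcal A_i}(X^{(i)})$, and $\cl_{\mathcal A_i}(\emptyset) = {\mathcal A_i}^\da(A_{i+})$ is in general nonempty. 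So if $\mathbf X\times\Upsilon\subseteq\pi_k$ with $\Upsilon\neq\emptyset$, then $\mathbf X$ lies in the $k$-th summand, but its closure spills into the other summands, and your ``and''-defined $\pi_k$ relates those spilled elements to nothing; hence $\cl_{\prod_i\mathcal A_i}(\mathbf X)\times\Upsilon\nsubseteq\pi_k$ and left compatibility (Lemma~\ref{lem:compatibility}(4)) fails. Concretely, take two copies of the polarity $(\{x\},\{\xi\},\{(x,\xi)\})$: then $\{{}_1x\}\times\{\xi\}\subseteq\pi_1$, but the closure of $\{{}_1x\}$ in the product is $\{{}_1x,{}_2x\}$ and ${}_2x$ is not $\pi_1$-related to $\xi\in A_{1+}$. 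The phrase ``transferring this back up the disjoint union'' is exactly where this is glossed over.

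The repair is to define the projection as the paper does, $ {}_j a \mathrel{P_k}\alpha$ iff $j=k \to a\mathrel{\mathcal A_k}\alpha$; then $P_k^\da(\{\alpha\})$ contains the whole of every summand other than the $k$-th, and one checks $(\prod_i\mathcal A_i)^\ua P_k^\da(\{\alpha\}) = \cl^{\prod_i\mathcal A_i}(\{{}_k\alpha\})$, which is what makes both $T\fatsemi P_k = R_k$ and your singleton-based uniqueness argument go through. Apart from this, your outline follows the paper's (much terser) proof: the componentwise description of the Galois maps, the identification $\Cl^-(\prod_i\mathcal A_i)\cong\prod_i\Cl^-(\mathcal A_i)$, the tupling relation $b\mathrel{T}{}_k\alpha\iff b\mathrel{R_k}\alpha$ (which is exactly the paper's mediating morphism, and whose right compatibility does hold, including on the empty components), and the closing remark that self-duality (Lemma~\ref{lem:cxt-selfdual}) gives the coproduct, are all sound and are worthwhile details the paper leaves as ``routine''.
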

\begin{proof}
For index $k$, define $P_k$ to be the corestricted from $\prod_i \mathcal A_i$ 
    to $\mathcal A_k$.
That is, ${}_ja\mathrel{P_k}\alpha$ if and only if $j=k$ implies $a\mathrel{\mathcal A_k}\alpha$.

Evidently, $P_k$ is compatible with $\prod_i \mathcal A_i$ on the right and with $\mathcal A_k$ on the left. 
For a polarity $\mathcal B$ and a family of compatible relations $\{\pfromto {R_i}{\mathcal B}{\mathcal A_i}\}_i$, define the relation $T$ by $b\mathrel{T}{}_i\alpha$ if and only if $b\mathrel{R_i}\alpha$.
It is routine to show that this is the unique ``tupling'' morphism from $\mathcal B$ to $\prod_i \mathcal A_i$ for which $T\fatsemi P_k = R_k$
for each index $k$.
\end{proof}

The coproduct is exactly the same construction. Insertions into the coproduct are duals of the projections.

\begin{lemma}
	For compatible relations $\pfromto {R,S}{\mathcal A}{\mathcal B}$, an equalizer is 
	$\mathcal A\upharpoonright E$ where $E = \{a\in A^- \st R[a] = S[a]\}$.
\end{lemma}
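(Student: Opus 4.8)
The plan is to use the restriction construction of Section~\ref{sec:monos-epis}: regard $\mathcal A\upharpoonright E = \mathcal A\cap(E\times A^+)$ as the monomorphic compatible relation $\pfromto{\mathcal A\upharpoonright E}{\mathcal A\upharpoonright E}{\mathcal A}$, and verify the equalizer universal property for it directly, in the combinatorial style of Lemma~\ref{lem:cxt-products}. First I would check that it equalizes $R$ and $S$. Using $(\mathcal A\upharpoonright E)^\da(\Xi) = \mathcal A^\da(\Xi)\cap E$ together with the left-compatibility identity $\cl_{\mathcal A}R^\da = R^\da$, for $Z\subseteq B^+$ one gets
\[
\bigl((\mathcal A\upharpoonright E)\fatsemi_{\mathcal A} R\bigr)^\da(Z) = (\mathcal A\upharpoonright E)^\da\bigl(\mathcal A^\ua(R^\da(Z))\bigr) = \cl_{\mathcal A}(R^\da(Z))\cap E = R^\da(Z)\cap E,
\]
and the same computation with $S$ in place of $R$. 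Since for $a\in E$ one has $a\in R^\da(Z)$ iff $Z\subseteq R[a]=S[a]$ iff $a\in S^\da(Z)$, the sets $R^\da(Z)\cap E$ and $S^\da(Z)\cap E$ coincide; as a compatible relation is recovered from its down-operator (Lemma~\ref{lem:bracket-da}), it follows that $(\mathcal A\upharpoonright E)\fatsemi R = (\mathcal A\upharpoonright E)\fatsemi S$.

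For the universal property, let $\pfromto T{\mathcal C}{\mathcal A}$ be compatible with $T\fatsemi R = T\fatsemi S$. Uniqueness of any factorization through $\mathcal A\upharpoonright E$ is automatic since that relation is a monomorphism, so only existence needs work. As a raw relation, $\mathcal A\upharpoonright E$ is the incidence relation of the polarity $\mathcal A\upharpoonright E$, so the composite $U\fatsemi_{\mathcal A\upharpoonright E}(\mathcal A\upharpoonright E)$ equals $U$ for every $U\subseteq C^-\times(\mathcal A\upharpoonright E)^+ = C^-\times A^+$ that is compatible with $\mathcal A\upharpoonright E$ on the right. Hence the only candidate factorization is $T$ itself, reread as a relation into $(\mathcal A\upharpoonright E)^+$, and the whole content of the universal property is the assertion that $T$, so reread, is a morphism $\mathcal C\to\mathcal A\upharpoonright E$ --- equivalently, that $T$ is compatible with $\mathcal A\upharpoonright E$ on the right (its compatibility with $\mathcal C$ on the left is unaffected, and right-compatibility with $\mathcal A\upharpoonright E$ in fact entails the original right-compatibility with $\mathcal A$).

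Establishing that compatibility is the main obstacle. By Lemma~\ref{lem:compatibility} it is equivalent to each column $T[c]$, $c\in C^-$, being $\cl^{\mathcal A\upharpoonright E}$-closed; and as $T$ is already compatible with $\mathcal A$ on the right, $T[c]$ is Galois closed in $\mathcal A$, so the requirement reduces to the inclusion $\mathcal A^\ua\bigl(\mathcal A^\da(T[c])\cap E\bigr)\subseteq T[c]$ for all $c$. This is exactly where $T\fatsemi R = T\fatsemi S$ must be spent: rewriting it as the identity $T^\da\mathcal A^\ua R^\da = T^\da\mathcal A^\ua S^\da$ of maps on $\powerset(B^+)$, one argues that an element $a\in A^-\setminus E$ cannot contribute to any Galois closed column of $T$. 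Concretely, such an $a$ admits some $\beta\in B^+$ with (say) $a\in R^\da(\{\beta\})$ but $a\notin S^\da(\{\beta\})$; intersecting the two Galois closed sets $R^\da(\{\beta\})$ and $S^\da(\{\beta\})$ with a column of $T$ and chasing that identity through $T^\da\mathcal A^\ua$ should let one excise $a$ from the column, and iterating this excision over $A^-\setminus E$ --- in general transfinitely, in the manner of the compatibilization operator of Section~\ref{sec:completeness} --- should deliver $\mathcal A^\ua(\mathcal A^\da(T[c])\cap E)\subseteq T[c]$. Making the excision step precise is the delicate part; the two verifications above and uniqueness from the monomorphism property are then routine.
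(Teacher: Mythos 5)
Your first two steps are correct and economically organized: the computation $\bigl((\mathcal A\upharpoonright E)\fatsemi R\bigr)^\da(Z)=\cl_{\mathcal A}(R^\da(Z))\cap E=R^\da(Z)\cap E=S^\da(Z)\cap E$ does show the restriction equalizes $R$ and $S$, and since the raw relation $\mathcal A\upharpoonright E$ is the incidence relation of its own domain, any $U$ that is right-compatible with $\mathcal A\upharpoonright E$ satisfies $U\fatsemi_{\mathcal A\upharpoonright E}(\mathcal A\upharpoonright E)=U$; hence a factorization of an equalizing cone $T$ exists if and only if $T$ itself, reread, is compatible with $\mathcal A\upharpoonright E$ on the right, and it is then automatically unique. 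So you have isolated the crux correctly: one needs ${\mathcal A}^\ua({\mathcal A}^\da(T[c])\cap E)\subseteq T[c]$ for every column. But the ``excision'' step you leave vague is not merely delicate --- it is false, and no transfinite iteration can repair it. Take $L=\{0,p,q,1\}$, the four-element Boolean lattice with atoms $p,q$, let $\mathcal A=(\{p,q\},L,\le)$, $\mathcal B=\co{C}(L)=(L,L,\le)$, and let $R=\{(a,b)\st a\le b\}$, $S=\{(a,b)\st a\le\sigma(b)\}$, where $\sigma$ swaps $p$ and $q$. Both are compatible: $R^\da(Z)=\{a\st a\le\bigwedge Z\}$ and $S^\da(Z)=\{a\st a\le\sigma(\bigwedge Z)\}$ are always $\cl_{\mathcal A}$-closed and depend only on $\bigwedge Z$, while $\cl^{\mathcal B}(Z)=\ua\bigwedge Z$. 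Here $R[p]=\{p,1\}\neq\{q,1\}=S[p]$ and symmetrically for $q$, so $E=\emptyset$. Now let $\mathcal C=\II$ and $T=\{(\bullet,1)\}$. Its single column $\{1\}={\mathcal A}^\ua(\{p,q\})$ is Galois closed, so $\pfromto{T}{\II}{\mathcal A}$ is a morphism, and a direct check gives $T\fatsemi R=\{(\bullet,1)\}=T\fatsemi S$ (for $\beta=p$ one finds ${\mathcal A}^\ua R^\da(\{p\})=\{p,1\}\not\subseteq\{1\}$ and ${\mathcal A}^\ua S^\da(\{p\})=\{q,1\}\not\subseteq\{1\}$, and similarly for the other $\beta$). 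Yet $T$ is not right-compatible with $\mathcal A\upharpoonright\emptyset$, whose only $\cl$-closed subset of $A^+$ is $A^+$ itself, and indeed the unique morphism $\II\looparrowright\mathcal A\upharpoonright\emptyset$ composes with the (empty) equalizing relation to give $\{\bullet\}\times A^+\neq T$. In your language: the closed column $\{1\}$ has polar $\{p,q\}$ consisting entirely of points outside $E$, and neither point can be excised.

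The conceptual failure is that an equalizing cone only forces the collective agreement $R^\ua(X)=S^\ua(X)$ for the closed sets $X={\mathcal A}^\da(T[c])$, whereas $E$ records pointwise agreement $R[a]=S[a]$; the former does not imply that $X$ is generated by its points lying in $E$ (above, $X=\{p,q\}$ while $X\cap E=\emptyset$). So the statement itself, whose verification the paper leaves to the reader, fails as stated, and your instinct that this is the delicate point was exactly right. A correct construction must work at the level of closed sets: the equalizer is (a polarity presenting) $\{X\in\Cl^-(\mathcal A)\st R^\ua(X)=S^\ua(X)\}$, the $\SUP$-equalizer of the left adjoints --- for instance the polarity with these $X$ as lower elements, $A^+$ as upper elements, and incidence $\alpha\in{\mathcal A}^\ua(X)$. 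This coincides with $\mathcal A\upharpoonright E$ only in special situations, such as $\mathcal A=\co{C}(L)$, where every closed set has a top generator in $A^-$.
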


We leave the verification to the reader.

The two preceding lemmas plus Lemma~\ref{lem:cxt-selfdual}, sum up to the following.

\begin{theorem}\label{thm:cxt-complete}
	The category $\Pol$ is complete and cocomplete.
\end{theorem}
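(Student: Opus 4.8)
The plan is to assemble Theorem~\ref{thm:cxt-complete} from the three ingredients already in hand: Lemma~\ref{lem:cxt-products} (all small products exist), the equalizer lemma (all equalizers exist), and Lemma~\ref{lem:cxt-selfdual} (self-duality). First I would invoke the standard fact that a category with all small products and all equalizers of pairs has all small limits — the limit of a diagram $D\colon J\to\Pol$ is constructed as the equalizer of the two evident morphisms $\prod_{j} D(j)\rightrightarrows \prod_{(u\colon j\to k)} D(k)$, one built from the projections and one from the projections post-composed with the $D(u)$. Since Lemma~\ref{lem:cxt-products} gives the products and the equalizer lemma gives the equalizers, $\Pol$ is complete.

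Next, cocompleteness follows formally from completeness together with self-duality: by Lemma~\ref{lem:cxt-selfdual} the functor $(-)^\partial\colon\Pol\to\Pol^{\textsf{op}}$ is an equivalence (indeed an isomorphism), so a colimit of a diagram $D$ in $\Pol$ is obtained by taking the limit of $D^\partial$ in $\Pol^{\textsf{op}}\cong\Pol$ and applying $(-)^\partial$ back. Equivalently, one notes that the coproduct and coequalizer constructions are literally the $\partial$-duals of the product and equalizer constructions just cited — the remark after Lemma~\ref{lem:cxt-products} already observes that coproducts coincide with products, and the coequalizer of $R,S\colon\mathcal A\looparrowright\mathcal B$ is the $+$-restriction $\mathcal B\downharpoonright F$ for the appropriate $F\subseteq B^+$, dual to the equalizer $\mathcal A\upharpoonright E$. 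So $\Pol$ has all small coproducts and coequalizers, hence all small colimits.

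The write-up itself is short: cite the "products + equalizers $\Rightarrow$ completeness" theorem, point to the two lemmas supplying the pieces, and then get cocompleteness for free from Lemma~\ref{lem:cxt-selfdual}. There is essentially no obstacle here — the only thing one must be slightly careful about is that the standard limit-from-products-and-equalizers argument requires products indexed by a possibly large class of arrows $\{u\colon j\to k\}$ of $J$, but since $J$ is small this index class is a set, so Lemma~\ref{lem:cxt-products} applies as stated. Thus the main content of the theorem lives entirely in the preceding lemmas, and this final step is purely a matter of bookkeeping.
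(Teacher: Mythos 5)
Your proposal is correct and matches the paper's own (very terse) argument: the paper simply states that Lemma~\ref{lem:cxt-products}, the equalizer lemma, and the self-duality of Lemma~\ref{lem:cxt-selfdual} ``sum up to'' the theorem, which is exactly the products-plus-equalizers route to completeness followed by duality for cocompleteness that you spell out. Your version just makes the standard bookkeeping explicit.
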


\section{Tensors and internal homs}\label{sec:tensors}

Though $\Pol$ is not cartesian closed (cannot be because it is self dual and is not trivial), it is $*$-autonomous, and therefore has an internal hom functor.
Recall that one characterization of a $*$-autonomous category $\cat{A}$ is that it is symmetric monoidal possessing a contravariant functor ${}^*$ satisfying 
\[
\cat{A}(A\otimes B,C^*)\simeq \cat{A}(A,(B\otimes C)^*)
\]
naturally.
In that situation, the internal hom is $A\multimap B = (A\otimes B^*)^*$.

We have our candidate contravariant functor $\mathcal A\mapsto \mathcal A^\partial$.
A suitable tensor product in $\Pol$, once again defined directly from the data in the constituent polarities, can be defined by generalizing the notion of a standard polarity $\tilde{\mathcal A}$ that we encountered in Lemma~\ref{lem:standardization}. 

For polarities $\mathcal A$ and $\mathcal B$, say that $T\subseteq A^-\times B^-$ is \emph{stable} if it is compatible with $\mathcal A$ 
on the left, and with $\mathcal B^\partial$ on the right.
That is, $X\times Y\subseteq R$ implies $\cl_{\mathcal A}(X)\times \cl_{\mathcal B}(Y)\subseteq R$.
Let $s(T)$ denote the smallest stable subset containing $T$.

\begin{lemma}\label{lem:basic-stable-sets}
For any polarities $\mathcal A$ and $\mathcal B$, 
and sets $X\subseteq A^-$, and $Y\subseteq B^-$, the set $\cl_{\mathcal A}(X)\times \cl_{\mathcal B}(Y)$ is itself stable, so it is the least stable set containing $X\times Y$.
\end{lemma}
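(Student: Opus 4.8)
The plan is to verify directly that $T \defeq \cl_{\mathcal A}(X)\times\cl_{\mathcal B}(Y)$ satisfies the two defining conditions of stability, using only that $\cl_{\mathcal A}$ and $\cl_{\mathcal B}$ are closure operators (monotone, inflationary, idempotent), and then to read off minimality as an immediate consequence. The first step is to unwind what ``stable'' demands. A set $T\subseteq A^-\times B^-$ is stable exactly when it is a morphism $\pfromto T{\mathcal A}{\mathcal B^\partial}$ in $\Pol$, so Lemma~\ref{lem:compatibility} applies: compatibility with $\mathcal A$ on the left is $X'\times Y'\subseteq T \Rightarrow \cl_{\mathcal A}(X')\times Y'\subseteq T$, and compatibility with $\mathcal B^\partial$ on the right is $X'\times Y'\subseteq T \Rightarrow X'\times \cl^{\mathcal B^\partial}(Y')\subseteq T$. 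Here one checks, using the $\intercal$-identities of the preliminaries, that $\cl^{\mathcal B^\partial} = (\mathcal B^\partial)^\ua(\mathcal B^\partial)^\da = \mathcal B^\da\mathcal B^\ua = \cl_{\mathcal B}$. Combining the two implications, ``$T$ is stable'' unfolds to: for all $X'\subseteq A^-$ and $Y'\subseteq B^-$, if $X'\times Y'\subseteq T$ then $\cl_{\mathcal A}(X')\times\cl_{\mathcal B}(Y')\subseteq T$ --- which is precisely the symmetric rectangle condition quoted in the statement.

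Next I would verify this condition for $T = \cl_{\mathcal A}(X)\times\cl_{\mathcal B}(Y)$. Given $X'\times Y'\subseteq T$ with $X',Y'$ nonempty, we get $X'\subseteq\cl_{\mathcal A}(X)$ and $Y'\subseteq\cl_{\mathcal B}(Y)$; monotonicity then gives $\cl_{\mathcal A}(X')\subseteq\cl_{\mathcal A}(\cl_{\mathcal A}(X))$ and idempotence collapses the right side to $\cl_{\mathcal A}(X)$, and symmetrically $\cl_{\mathcal B}(Y')\subseteq\cl_{\mathcal B}(Y)$, whence $\cl_{\mathcal A}(X')\times\cl_{\mathcal B}(Y')\subseteq T$. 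Equivalently, and more cleanly, I would just compute $T^\da$ outright: $T^\da(Z) = \cl_{\mathcal A}(X)$ for every nonempty $Z\subseteq\cl_{\mathcal B}(Y)$, with the two evident degenerate values otherwise, and then the equalities $\cl_{\mathcal A}T^\da = T^\da$ and $T^\da\cl_{\mathcal B} = T^\da$ fall out from idempotence of $\cl_{\mathcal A}$ and from $Z\subseteq\cl_{\mathcal B}(Y)\iff\cl_{\mathcal B}(Z)\subseteq\cl_{\mathcal B}(Y)$.

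Finally, for minimality: since $\cl_{\mathcal A}$ and $\cl_{\mathcal B}$ are inflationary, $X\times Y\subseteq\cl_{\mathcal A}(X)\times\cl_{\mathcal B}(Y) = T$, so $T$ is a stable set containing $X\times Y$; and if $T'$ is any stable set with $X\times Y\subseteq T'$, applying the rectangle condition with $X'=X$, $Y'=Y$ forces $\cl_{\mathcal A}(X)\times\cl_{\mathcal B}(Y)\subseteq T'$, i.e. $T\subseteq T'$. Hence $T = s(X\times Y)$. The content of the argument is bookkeeping rather than difficulty: the one place to be careful is the $\mathcal B^\partial$ side --- confirming that ``compatible with $\mathcal B^\partial$ on the right'' contributes the factor $\cl_{\mathcal B}$, and not $\cl^{\mathcal B}$, to the rectangle condition --- together with the degenerate cases in which $X'$ or $Y'$ is empty, which need a moment's attention (via the explicit formula for $T^\da$ and the standing conventions on empty intersections) but cause no real trouble. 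Everything else is a one-line invocation of the closure-operator axioms.
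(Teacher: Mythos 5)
Your argument is, in substance, the paper's own proof written out in full: the paper disposes of the lemma with precisely the monotonicity-plus-idempotence observation, and your minimality paragraph is the same immediate remark. For nonempty $X'$ and $Y'$ your verification of the rectangle condition for $T=\cl_{\mathcal A}(X)\times\cl_{\mathcal B}(Y)$ is correct, as is the identification $\cl^{\mathcal B^\partial}=\cl_{\mathcal B}$.

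The one place where your proposal goes beyond a restatement of the paper is the closing assurance that the degenerate cases ``cause no real trouble,'' and that is exactly where the claim is delicate. Under the paper's official (equational) definition, stability of $T$ requires $\cl_{\mathcal A}T^\da = T^\da$ and $T^\da\cl_{\mathcal B} = T^\da$ at \emph{every} argument, equivalently the rectangle condition for possibly empty $X'$, $Y'$; and there it can fail. Take $\mathcal A=(\{a\},\{\alpha\},\{(a,\alpha)\})$, so $\cl_{\mathcal A}(\emptyset)=\{a\}$, and $\mathcal B$ with $B^-=\{b_1,b_2\}$, $B^+=\{\beta\}$, and $b_1\mathrel{\mathcal B}\beta$ only. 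With $X=\{a\}$, $Y=\{b_1\}$ one gets $T=\{(a,b_1)\}$, and $T^\da(\{b_2\})=\emptyset$ while $\cl_{\mathcal A}(\emptyset)=\{a\}$, so $\cl_{\mathcal A}T^\da\neq T^\da$; in rectangle form, $\emptyset=\emptyset\times\{b_2\}\subseteq T$ but $\cl_{\mathcal A}(\emptyset)\times\cl_{\mathcal B}(\{b_2\})=\{a\}\times B^-\nsubseteq T$. More generally, any relation compatible with $\mathcal A$ on the left has every $T^\da(\{b\})$ Galois closed, hence contains $\cl_{\mathcal A}(\emptyset)\times B^-$, and dually $A^-\times\cl_{\mathcal B}(\emptyset)$, so when these closures are nonempty the least stable set containing $X\times Y$ is strictly larger than $\cl_{\mathcal A}(X)\times\cl_{\mathcal B}(Y)$. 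So your instinct to flag the empty cases was sound, but your resolution of them is not: either stability must be read as the rectangle condition over nonempty rectangles, or one must assume $\cl_{\mathcal A}(\emptyset)=\emptyset=\cl_{\mathcal B}(\emptyset)$, for the lemma to hold verbatim. In fairness, the paper's one-line proof elides exactly the same point; your write-up matches its approach and inherits its gap, and the defect lies as much in the statement's degenerate cases as in your argument.
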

\begin{proof}
    This follows directly from the fact that $\cl_{\mathcal A}$ is monotonic and idempotent.
\end{proof}

Since $s$ is a closure on $A^-\times B^-$,
    we can define $\mathcal A\otimes \mathcal B$ to be
    the standard polarity $\mathsf A_s$.
Specifically, $(a,b) \mathrel{(\mathcal A\otimes \mathcal B)} T$ if and only if $(a,b)\in s(T)$.
Keep in mind that $s(T)$ concretely is a compatible relation 
$\pfromto {s(T)}{\mathcal A}{\mathcal B^\partial}$.

To extend $\otimes$ to a functor, 
consider compatible relations $\pfromto{Q}{\mathcal A}{\mathcal B}$
    and $\pfromto S{\mathcal C}{\mathcal D}$.
    Then $S^\partial$ is compatible from $\mathcal D^\partial$ to $\mathcal C^\partial$. 
And by definition, for any $R\subseteq B^-\times D^-$, 
    $s(R)$ is compatible from $\mathcal B$ to $\mathcal D^\partial$.
    Thus, $Q\fatsemi s(R) \fatsemi S$ is a well-defined (compatible) relation from $\mathcal A$ to $\mathcal C^\partial$.
    
Define $Q\otimes S\subseteq (A^-\times C^-)\times \powerset(B^-\times D^-)$ to be the relation 
    $(a,c) \mathrel{(Q\otimes S)} R$ if and only 
    $a\mathrel{Q\fatsemi s(R)\fatsemi S^\partial} c$. 
As in the proof of Lemma~\ref{lem:cxt-monos}, let $\II = (\{\bullet\},\{\bullet\},\emptyset)$.

\begin{theorem}
    The constructions $\otimes$ and ${}^\partial$, and the object $\II$ make $\Pol$ a $*$-autonomous category.
\end{theorem}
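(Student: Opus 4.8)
The plan is to produce the two pieces the quoted characterization asks for: a symmetric monoidal structure on $(\Pol,\otimes,\II)$, and a natural isomorphism $\Pol(\mathcal A\otimes\mathcal B,\mathcal C^\partial)\cong\Pol(\mathcal A,(\mathcal B\otimes\mathcal C)^\partial)$. The dualizing involution is already in hand: Lemma~\ref{lem:cxt-selfdual} supplies the contravariant functor ${}^\partial$ with ${}^{\partial\partial}=\id$, and $\II^\partial=\II$ on the nose since the empty relation is self-converse. As a sanity check I would keep in mind that $\Pol$ is dually equivalent to $\INF$ (Theorem~\ref{thm:cxt=inf-op}), hence equivalent to $\SUP$, which is $*$-autonomous with the tensor product of sup-lattices and unit $2$; the construction below is the direct polarity-theoretic image of that structure, but I would not route the proof through the equivalence.

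The heart of the matter is a hom-set computation. I would first record the elementary facts $\cl^{\mathcal C^\partial}=\cl_{\mathcal C}$ on $(\mathcal C^\partial)^+=C^-$ and $\cl_{\mathcal C^\partial}=\cl^{\mathcal C}$ on $(\mathcal C^\partial)^-=C^+$, and that the standard polarity $\mathcal A\otimes\mathcal B=\mathsf A_s$ has $\cl_{\mathcal A\otimes\mathcal B}=s$ (exactly as in the proof of Theorem~\ref{lem:standardization}), with $s(X\times Y)=\cl_{\mathcal A}(X)\times\cl_{\mathcal B}(Y)$ by Lemma~\ref{lem:basic-stable-sets}. Unwinding compatibility via the combined form of Lemma~\ref{lem:compatibility}, a morphism $\mathcal A\otimes\mathcal B\to\mathcal C^\partial$ is a relation $R\subseteq(A^-\times B^-)\times C^-$ for which $W\times Z\subseteq R$ implies $s(W)\times\cl_{\mathcal C}(Z)\subseteq R$; a short argument fixing two of the three coordinates to closed sets and noting that the remaining section is then closed shows this is equivalent to $R$ being \emph{ternary stable}, i.e.\ $X\times Y\times Z\subseteq R$ implies $\cl_{\mathcal A}(X)\times\cl_{\mathcal B}(Y)\times\cl_{\mathcal C}(Z)\subseteq R$. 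Running the identical computation for $\mathcal A\to(\mathcal B\otimes\mathcal C)^\partial$ yields the same set of relations on $A^-\times(B^-\times C^-)$, so the isomorphism of hom-sets is just reassociation of the cartesian product, and naturality in $\mathcal A,\mathcal B,\mathcal C$ reduces to checking that $\fatsemi$-composition on each side commutes with reassociation. (The same bookkeeping with $\mathcal C$ in place of $\mathcal C^\partial$ gives $\Pol(\mathcal A\otimes\mathcal B,\mathcal C)\cong\Pol(\mathcal A,\mathcal B\multimap\mathcal C)$ for $\mathcal B\multimap\mathcal C=(\mathcal B\otimes\mathcal C^\partial)^\partial$, so $\Pol$ is in fact monoidal closed.)

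For the symmetric monoidal structure proper: symmetry is immediate because ``stable'' is visibly symmetric in $\mathcal A$ and $\mathcal B$ up to swapping coordinates; the unit law holds because the stable subsets of $A^-\times\{\bullet\}$ are precisely the $\cl_{\mathcal A}$-closed subsets of $A^-$, so $\mathcal A\otimes\II$ is (after the bijection $A^-\times\{\bullet\}\cong A^-$) the standardization $\tilde{\mathcal A}\cong\mathcal A$ of Theorem~\ref{lem:standardization}. Associativity is handled by the ternary-stable identification again: $\mathcal A\otimes(\mathcal B\otimes\mathcal C)$ and $(\mathcal A\otimes\mathcal B)\otimes\mathcal C$ are standard polarities whose closed sets, under reassociation of $A^-\times B^-\times C^-$, are exactly the ternary stable relations, and a bijection carrying closed sets to closed sets induces an isomorphism of the corresponding standard polarities (via the dual equivalence $\Cl^-$). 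All the coherence diagrams then collapse to the corresponding identities for $\times$ on sets, so they hold automatically; alternatively, once the hom-set descriptions are available one observes that $\mathcal A_1\otimes\cdots\otimes\mathcal A_n$ represents, through $\Pol(-,\mathcal C^\partial)$, the $n$-ary stable relations, and obtains the coherent symmetric monoidal structure by Yoneda.

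The remaining task is to confirm that $Q\otimes S$, defined by $(a,c)\mathrel{(Q\otimes S)}R\iff a\mathrel{Q\fatsemi s(R)\fatsemi S^\partial}c$, is a compatible relation $\mathcal A\otimes\mathcal C\to\mathcal B\otimes\mathcal D$ and that $\otimes$ is bifunctorial, i.e.\ preserves identities and satisfies $(Q\fatsemi Q')\otimes(S\fatsemi S')=(Q\otimes S)\fatsemi(Q'\otimes S')$. Here one uses that $s(R)$ is by construction a compatible relation $\mathcal B\to\mathcal D^\partial$ and that $S^\partial\colon\mathcal D^\partial\to\mathcal C^\partial$ by Lemma~\ref{lem:cxt-selfdual}, together with associativity of $\fatsemi$. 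I expect this to be the main obstacle: because $s(-)$ is in general only reached by transfinite iteration and the composite inside the standard polarity $\mathcal B\otimes\mathcal D$ silently inserts an application of $\cl_{\mathcal B\otimes\mathcal D}=s$, keeping the interaction of $s$ with $\fatsemi$-composition bookkeeping-proof is where the care is needed. Everything else either is already established earlier in the paper or reduces, as above, to the elementary combinatorics of cartesian products.
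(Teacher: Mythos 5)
Your proposal follows essentially the same route as the paper's proof: the tensor as the standard polarity of stable sets, the unit law via the standardization equivalence (stable subsets of $A^-\times\{\bullet\}$ are the $\cl_{\mathcal A}$-closed sets), associativity via the identification of stability in $(\mathcal A\otimes\mathcal B)\otimes\mathcal C$ with the ternary condition, and the $*$-autonomy bijection obtained by observing that both $\Pol(\mathcal A\otimes\mathcal B,\mathcal C^\partial)$ and $\Pol(\mathcal A,(\mathcal B\otimes\mathcal C)^\partial)$ are, up to reassociation of $A^-\times B^-\times C^-$, exactly the ternary stable relations. Two of your touches are in fact improvements in presentation: the ``fix two coordinates at closed sets'' section argument for the equivalence of compatibility with ternary stability is cleaner than the paper's appeal to $\cl_{\mathcal A\otimes\mathcal B}(W)$ being a union of rectangles, and the representability/Yoneda remark disposes of coherence more explicitly than the paper does.

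The one step you leave open---compatibility of $Q\otimes S$ and bifunctoriality of $\otimes$, which you flag as the expected main obstacle because $s$ may require transfinite iteration---is precisely where the paper's one-line observation finishes the job, and your worry dissolves: $Q\fatsemi s(R)\fatsemi S^\partial$ is a composite of morphisms of $\Pol$, hence itself a compatible relation from $\mathcal A$ to $\mathcal C^\partial$, hence a fixed point of $s$ (equivalently, of compatibilization). Consequently the closure silently inserted when composing through the standard polarity $\mathcal B\otimes\mathcal D$ acts as the identity, left compatibility of $Q\otimes S$ follows because each section $(Q\otimes S)^\intercal[R]$ is compatible while $\cl_{\mathcal A\otimes\mathcal C}$ is compatibilization, right compatibility follows since any $R$ in $\cl^{\mathcal B\otimes\mathcal D}(\mathbf R)$ satisfies $\bigcap_{R'\in\mathbf R}s(R')\subseteq s(R)$ and composition preserves intersections (Lemma~\ref{lem:hom-complete}), and $((P\fatsemi Q)\otimes(T\fatsemi S))^\intercal[R]$ and $((P\otimes T)\fatsemi(Q\otimes S))^\intercal[R]$ are literally the same relation $P\fatsemi Q\fatsemi s(R)\fatsemi S^\partial\fatsemi T^\partial$. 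No transfinite bookkeeping ever enters; with that inserted, your argument is complete and matches the paper's.
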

\begin{proof}
    Because $Q\fatsemi s(R) \fatsemi S^\partial$ is a compatible 
relation, and $\cl_{\mathcal A\otimes \mathcal C^\partial}$ is compatibilization, $Q\otimes S$ is compatible with $\mathcal A\otimes \mathcal C$ on the left. 
For compatibility on the right,
observe that for a subset $\mathbf R\subseteq\powerset(B^-\times D^-)$, the closure $\cl^{\mathcal B\otimes \mathcal D}(\mathbf R)$ 
consists of all $R\subseteq B^-\times D^-$ for which 
$\bigcap_{R'\in \mathbf R} s(R') \subseteq s(R)$.
Hence, $(Q\otimes S)^\da(\mathcal R) \subseteq (Q\otimes S)^\da(\cl^{\mathcal B\otimes \mathcal D}(\mathbf R))$. 

To see that $\otimes$ is a functor, note that $((P\fatsemi Q)\otimes (T\fatsemi S))^\intercal[R]$ is the relation $P\fatsemi Q\fatsemi s(R)\fatsemi (T\fatsemi S)^\partial$, and $((P\otimes T)\fatsemi (Q\otimes S))^\intercal[R]$ is the relation 
$P\fatsemi (Q\fatsemi s(R)\fatsemi S^\partial)^*\fatsemi T^\partial$. But $Q\fatsemi s(R) S^\partial$ is already compatible,
so in this, the compatibilization does nothing.

Obviously, $\mathcal A\otimes \mathcal B$ is naturally isomorphic to $\mathcal B\otimes \mathcal A$ by the relation $(a,b)\Gamma R$ if and only if $(b,a) \mathrel{\mathcal B\otimes \mathcal A} R$.

Associativity follows from showing that $T\subseteq (A^-\times B^-)\times C^-$ is stable in $\mathcal A\otimes \mathcal B)\otimes \mathcal C))$ if and only if 
\begin{equation}\label{eqn:stability}
(X\times Y)\times Z\subseteq T \qquad \implies 
(\cl_{\mathcal A}(X)\times \cl_{\mathcal B}(Y))\times \cl_{\mathcal C}(Z) \subseteq T.\tag{*}
\end{equation}
Clearly, if $T$ is closed, then the implication (\ref{eqn:stability}) holds.
Suppose (\ref{eqn:stability}) holds, and $W\times Z\subseteq T$. 
Then all $X$ and $Y$, $X\times Y\subseteq W$ implies $(\cl_{\mathcal A}(X)\times \cl_{\mathcal B}(Y))\times \cl_{\mathcal C}(Z) \subseteq T$.
But $\cl_{\mathcal A\otimes \mathcal B}(W)$ is the union of all $\cl_{\mathcal A}(X)\times \cl_{\mathcal B}(Y)$ for $X\times Y\subseteq W$, and so $\cl_{\mathcal A\otimes \mathcal B}(W)\times \cl_{\mathcal C}(Z) \subseteq T$.

Now, $\II$ is a unit for $\otimes$ 
because the lower sets for $\II\otimes \mathcal A$ and $\mathcal A\otimes \II$ are
$\{\bullet\}\times A^-$ and $A^-\times \{\bullet\}$, and $\II\otimes \mathcal A$ and $\mathcal A\otimes \II$ are both essentially just the standardization of $\mathcal A$.

A compatible relation $\pfromto R{\mathcal A\otimes \mathcal B}{\mathcal C^\partial}$, concretely, is an element of $\Cl^-((\mathcal A\otimes B)\otimes \mathcal C)$. 
The bijection 
$(A^-\times B^-)\times C^- \simeq A^-\times (B^-\times C^-)$ sends $R$ to an element of $\Cl^-(\mathcal A\otimes (\mathcal B\otimes \mathcal C))$, which is a compatible relation from $\mathcal A$ to $(\mathcal B\otimes C)^\partial$.
\end{proof}

\section{Future work}
\label{sec:some_correspondence_theory}

Ern\'e, in \cite{erne_distributive_1993}, 
    investigates characterizations of polarities $\mathcal A$ 
    for which $\Cl^-(\mathcal A)$ satisfies various 
    distributivity laws.
In nice cases, the result is a condition expressible 
    in first-order using the two-sorted vocabulary 
    of a single binary relation. 

From the point of view that polarities are generalized 
    Kripke frames (not to be confused with the frames 
    of point-free topology), Ern\'e's work points toward 
    a sort of correspondence theory for polarities, 
    in which a property of the lattices $\Cl^-(\mathcal A)$ 
    that involves quantifying over closed sets, 
    and in cases such as infinite distributivity, 
    over sets of sets of closed sets, 
    is actually a elementary property of polarities.

The development of such a correspondence theory 
    would begin with two things. 
First, one would like a first-order correspondent 
    for frame homomorphisms, i.e., a sentence in the first-order 
    language of four sorts $A^-$, $A^+$, $B^-$, $B^+$, 
    and three binary relations $\mathbf A$, $\mathbf R$, 
    and $\mathbf B$ that encodes the ``frame' condition'' 
    that the interpretation of  $\mathbf R$ is compatible 
    with the interpretations of $\mathbf A$ and $\mathbf B$.
Second, one would like to have generalize this, for example, 
    to quantales in such a way that various 
    (unary) modal operators also fit into the generalization.

In a subsequent paper, currently in progress, we develop 
    this general elementary correspondence theory for polarities. 
For example, an elementarily definable category 
    of frame polarities and morphisms is equivalent via 
    $\Cl^-(-)$ to the category $\cat{Loc}$ of locales 
    (point-free topologies) 
    and localic (point-free continuous) maps. 
This opens the possibility for investigating traditional 
    locale-theoretic topics such as separation, 
    the structure of the frame of sublocales, uniformity, 
    and so on, in first-order terms.

In other work being pursued in parallel to this paper, 
    we consider topological extensions of polarities 
    that capture (non-distributive) lattices, 
    as opposed to general complete lattices.

Another area that warrants more investigation is a connection 
    to Giovanni Sambin's development of formal topology via 
    what he calls basic pairs \cite{sambin_points_2003}. 
Putting aside foundational issues (for Sambin the word ``set'' 
    does not encompass what the reader likely means), 
    a Sambinian \emph{basic pair} is precisely a polarity.
A morphism between basic pairs is a pair of functions, 
    satisfying slightly weaker conditions that in Ern\'e's 
    or Goldblatt's work, but for which equality is defined 
    more liberally.
In effect (classically), basic pair morphisms are 
    equivalence classes of pairs of functions. 
We have reasons to believe that these equivalence classes 
    correspond to compatible relations. 
If that bears out, we will have a very satisfying bridge 
    between Ern\'e, Goldblatt, and our approach.

\bibliographystyle{abbrv} 
\bibliography{references}

\end{document}